\documentclass[11pt]{article}\textwidth 160mm\textheight 235mm
\oddsidemargin-2mm\evensidemargin-2mm\topmargin-10mm
\usepackage{amsfonts}
\usepackage{amssymb}
\usepackage{graphicx}
\usepackage{amsmath}
\usepackage{amsthm}
\usepackage{enumitem}
\usepackage{tikz}
\usepackage{mathrsfs}
\usepackage{cancel}
\usepackage{todonotes}
\usepackage{cases}
\usetikzlibrary{matrix}
\usetikzlibrary{arrows,shapes}
\usepackage{cite}
\usepackage{bm}
\usepackage{hyperref}
\hypersetup{colorlinks=true, urlcolor=blue}
\expandafter\let\expandafter\oldproof\csname\string\proof\endcsname
\let\oldendproof\endproof
\renewenvironment{proof}[1][\proofname]{%
  \oldproof[\ttfamily \scshape \bf #1. ]%
}{\oldendproof}

\def\O{{\cal O}}
\def\C{{\cal C}}
\def\B{\mathbb{B}}
\def\R{{\rm I\!R}}
\def\oR{\overline{\R}}
\def\N{{\rm I\!N}}
\def\oi{{i_0}}
\def\oj{{j_0}}
\def\ox{\bar{x}}
\def\oy{\bar{y}}
\def\oz{\bar{z}}
\def\ov{\bar{v}}

\def \b{{\}_{k\in\N}}}

\def\xk{x^k}
\def\yk{y^k}
\def\zk{z^k}
\def\lmk{{\lambda^k}}
\def\wk{w^k}


\def\what{\widehat}

\def\emp{\emptyset}

\def\tto{\rightrightarrows}

\def\tto{\rightrightarrows}

\def\sub{\partial}
\def\ra{\rangle}
\def\la{\langle}

\def\lm{\lambda}
\def\olm{{\bar\lambda}}

\def\dd{\delta}
\def\al{\alpha}

\def\otau{{\bar\tau}}
\def\osigma{{\bar\sigma}}
\def\ph{\varphi}

\def\toset_#1{\xrightarrow{#1}}
\DeclareMathOperator*{\mini}{minimize\;}
\DeclareMathOperator*{\argmin}{argmin}
\def\prox{{\rm prox}}
\def\d{{\rm d}}
\def\dist{{\rm dist}}

\def\rge{{\rm rge\,}}
\def\co{{\rm co}\,}
\def\cone{{\rm cone}\,}
\def\ri{{\rm ri}\,}

\def\inte{{\rm int}\,}
\def\gph{{\rm gph}\,}
\def\epi{{\rm epi}\,}
\def\dim{{\rm dim}\,}
\def\dom{{\rm dom}\,}

\def\ker{{\rm ker}\,}


\def\Kg{{K_g(\oz, \olm)}}

\def\Kgzv{{K_g(z,\lm)}}
\def\K{{\overline{  K}}}
\def\rN{{\what{N}}}

\begin{document}
\vspace*{0.5in}
\begin{center}
{\bf ROLE OF SUBGRADIENTS IN VARIATIONAL ANALYSIS OF POLYHEDRAL FUNCTIONS}\\[2ex]
NGUYEN T. V. HANG\footnote{Institute of Mathematics, Vietnam Academy of Science and Technology, Hanoi, Vietnam (ntvhang@math.ac.vn). Research of this author is partially supported by Vietnam Academy of Science and Technology under grant CTTH00.01/22-23.}, WOOSUK JUNG\footnote{ 
 Department of Mathematics, Miami University, Oxford, OH 45065, USA (jungw2@miamioh.edu).}, and  M. EBRAHIM SARABI\footnote{Corresponding author, Department of Mathematics, Miami University, Oxford, OH 45065, USA (sarabim@miamioh.edu). Research of this    author is partially supported by the U.S. National Science Foundation  under the grant DMS 2108546.}
\end{center}
\vspace*{0.05in}

\small{\bf Abstract.} Understanding the role that subgradients play in various second-order variational analysis constructions can 
help us   uncover new  properties of important classes of functions in variational analysis. Focusing mainly on the behavior 
of the second subderivative and subgradient proto-derivative of polyhedral functions, functions with polyhedral epigraphs, we  demonstrate that choosing 
the underlying subgradient, utilized in the definitions of these concepts, from  the relative interior of the subdifferential  of polyhedral functions ensures stronger 
second-order variational properties such as strict twice epi-differentiability and strict subgradient proto-differentiability. This allows us to characterize continuous differentiability of the proximal 
mapping and twice continuous differentiability of the Moreau envelope of polyhedral functions. We close the paper with proving the equivalence of metric regularity 
and strong metric regularity of a class of generalized equations at their  nondegenerate solutions. 
 \\[1ex]
{\bf Keywords} Polyhedral functions, reduction lemma, nondegenerate solutions, strict proto-differentiability, strict twice epi-differentiability, proximal mappings,  strong metric regularity. \\[1ex]
{\bf Mathematics Subject Classification (2000)} 90C31, 65K99, 49J52, 49J53

\newtheorem{Theorem}{Theorem}[section]
\newtheorem{Proposition}[Theorem]{Proposition}
\newtheorem{Lemma}[Theorem]{Lemma}
\newtheorem{Corollary}[Theorem]{Corollary}
\newtheorem{Definition}[Theorem]{Definition}
\numberwithin{equation}{section}

\theoremstyle{definition}
\newtheorem{Example}[Theorem]{Example}
\newtheorem{Remark}[Theorem]{Remark}

\renewcommand{\thefootnote}{\fnsymbol{footnote}}
\setcounter{footnote}{0}
\normalsize

\section{Introduction}
Second-order variational constructions such as  the second subderivative and subgradient proto-derivative play an important role in parametric optimization and convergence analysis of 
important numerical algorithms such as   the augmented Lagrangian method \cite{hs, r22}. Given an extended-real-valued function 
$f:\R^n\to  \oR:=[-\infty,\infty]$, these second-order variational constructions are defined at a point $(x,v)$ in the graph of the subgradient mapping of $f$. One   may wonder what impacts the selection 
of the subgradient $v$ can have in these constructions. 
Our main goal in this paper is to study the underlying role that the selection of a subgradient  can play for such constructions. 
To achieve this goal, we focus mainly on a particular class of convex functions called polyhedral functions. Recall that 
 a proper function $g:\R^m\to \oR$ is called polyhedral if  its epigraph is a polyhedral convex set. 
Given  $\oz\in \R^m$ with $g(\oz)$  finite, consider a subgradient $\olm\in   \sub g(\oz)$. It is well-known (cf. \cite[Proposition~13.9]{rw}) that the polyhedral 
function $g$ is twice epi-differentiable at $\oz$ for $\olm$, that the subgradient mapping $\sub g$ is proto-differentiable at $\oz$ for $\olm$, and that its proximal mapping 
  is directionally differentiable at $\oz+\olm$; see Sections~\ref{sec03} and \ref{sec4} for the definitions of these concepts. 

Should we expect further properties 
if, in addition, we assume that $\olm\in \ri \sub g(\oz)$? This is the main question that we are going to investigate for this class of convex functions. Note that such a relative interior 
condition has been utilized in several studies related to different numerical  methods, including the partial smoothness \cite{l03} and the ${\cal U}$-Lagrangian of
convex functions \cite{los}.  Therefore  understanding 
 the role that is  played by  this condition in second-order variational analysis can lead to stronger stability properties for important classes of  functions  in variational analysis  as demonstrated in Sections \ref{sec4} and \ref{sec05}. 
 
 Following this question, we  uncover new second-order variational properties of polyhedral functions, including strict twice epi-differentiability and strict subgradient proto-differentiability,  under this extra assumption.
These findings allow us to achieve a useful characterization of  continuous differentiability of the proximal mapping and twice continuous differentiability of the Moreau envelope of polyhedral functions. As an important 
application, we turn to study stability properties of the solution mapping to the generalized equation 
  \begin{equation}\label{vs1}
0\in \psi(x)+\partial g(x),
\end{equation}
where $\psi:\R^m\to\R^m$ is  a ${\cal C}^1$ mapping and $g: \R^m \to \oR $ is a polyhedral function. 
Our interest is mainly in examining the relationship between metric regularity and strong metric regularity for \eqref{vs1}. 
The seminal work of Donchev and Rockafellar in \cite{dr96} demonstrated that these properties are equivalent for \eqref{vs1} when the polyhedral function $g$ is the indicator function 
of a polyhedral convex set. Employing our new developments under the relative interior condition, we are going to show that if $\ox$ is 
a {\em nondegenerate}  solution to the generalized equation \eqref{vs1}, meaning that it satisfies the condition 
\begin{equation}\label{mr1}
-\psi(\ox) \in \ri \partial g(\ox),
\end{equation}
then under some verifiable assumptions   the solution mapping to the canonical perturbation 
of \eqref{vs1} has a Lipschitz continuous single-valued localization, which is continuously differentiable. 
The latter smoothness of a localization of solution mappings of 
generalized equations resembles a similar conclusion from the classical inverse mapping theorem.  

The rest of the paper is organized as follows. Section~\ref{sec02}  contains definitions of important concepts, used in this paper. We also establish some properties 
of polyhedral functions. Section~\ref{sec03} begins with a new proof of the reduction lemma for polyhedral functions and then we present its important consequences in various second-order 
variational constructions. In particular, we show that under the relative interior condition, the   subgradient mappings of polyhedral functions are strictly proto-differentiable.
 Section~\ref{sec4} is devoted to study strict twice epi-differentiablity of polyhedral functions. As an important consequence, we characterize continuous differentiability of the proximal 
 mapping and twice continuous differentiability of the Moreau envelope of polyhedral functions. The final section, Section~\ref{sec05}, concerns the equivalence of metric regularity and strong metric regularity 
 for the generalized equation \eqref{vs1}. Using this equivalence and \eqref{mr1}, we present sufficient conditions for a smooth single-valued localization of the solution mapping 
 to the canonical perturbation of \eqref{vs1}. 

\section{Notation and Preliminary Results}\label{sec02} 
In what follows,    we 
 denote by $\B$ the closed unit ball in the space in question and by $\B_r(x):=x+r\B$ the closed ball centered at $x$ with radius $r>0$. 
  In the  product space $\R^n\times \R^m$, we use the norm $\|(w,u)\|=\sqrt{\|w\|^2+\|u\|^2}$ for any $(w,u)\in \R^n\times \R^m$.
 Given a nonempty set $C\subset\R^n$, the symbols $\inte C$,  $\ri C$, $\cone C$, and $\co C$ signify its interior, relative interior, conic hull, and convex hull, respectively. 
 For any set $C$ in $\R^n$, its indicator function is defined by $\dd_C(x)=0$ for $x\in C$ and $\dd_C(x)=\infty$ otherwise. We denote
 by $P_C$ the projection mapping onto $C$ and  by $\dist(x,C)$  the distance between $x\in \R^n$ and a set $C$.
 For a vector $w\in \R^n$, the subspace $\{tw |\, t\in \R\}$ is denoted by $[w]$. The domain and range of a set-valued mapping $F:\R^n\tto\R^m$ are defined, respectively, by  
$ \dom F:= \{x\in\R^n\big|\;F(x)\ne\emp \}$ and $\rge F=\{u\in \R^m|\; \exists \,  w\in \R^n\;\,\mbox{with}\;\; u\in  F(w) \}$.

In this paper, the convergence of a family of sets is always understood in the sense of Painlev\'e-Kuratowski (cf. \cite[Definition~4.1]{rw}). 
This means that the inner limit set of a parameterized family of sets $\{C^t\}_{t>0}$ in $\R^d$, denoted $\liminf_{t\searrow 0} C^t$, is the set of points $x$ such that for every
sequence $t_k \searrow 0$,  $x$ is the limit of a sequence of points $x^{t_k}\in C^{t_k}$. The outer limit set of this family of sets, 
denoted $\limsup_{t\searrow 0} C^t$, is the set of points $x$ such that there exist  sequences
 $t_k \searrow 0$ and $x^{t_k}\in C^{t_k}$ such that  $x^{t_k}\to x$ as $k\to \infty$. A sequence $\{f^k\b$ of functions $f^k:\R^n\to \oR$ is said to {\em epi-converge} to a function $f:\R^n\to \oR$ if we have $\epi f^k\to \epi f$ as $k\to \infty$,
 where $\epi f=\{(x,\al)\in \R^n\times \R|\, f(x)\le \al\}$ is   the epigraph of $f$;
 see \cite[Definition~7.1]{rw} for more details on the epi-convergence of 
a sequence of extended-real-valued functions.  

Given a nonempty set $\Omega\subset\R^n$ with $\ox\in \Omega$, the {  tangent cone}  to $\Omega$ at $\ox$, denoted $T_\Omega(\ox)$,  is defined  by
\begin{equation}\label{tan1}
T_\Omega(\ox) = \limsup_{t\searrow 0} \frac{\Omega - \ox}{t}.
\end{equation}
The  regular/Fr\'{e}chet normal cone $\rN_\Omega(\ox)$ to $\Omega$ at $\ox$ is defined by
 $\rN_\Omega(\ox) = T_\Omega(\ox)^*$, the polar of the tangent cone \eqref{tan1}. The  (limiting/Mordukhovich) normal cone $N_\Omega(\ox)$ to $\Omega$ at $\ox$ is
 the set of all vectors $\ov\in \R^n$ for which there exist sequences  $\{x^k\b$ and  $\{v^k\b$ with $v^k\in \rN_\Omega( x^k)$ such that 
$(x^k,v^k)\to (\ox,\ov)$. When $\Omega$ is convex, both normal cones boil down to that of convex analysis.  
Given a function $f:\R^n \to \oR$ and a point $\ox\in\R^n$ with $f(\ox)$ finite, the subderivative function $\d f(\ox)\colon\R^n\to\oR$ is defined by
\begin{equation*}\label{fsud}
\d f(\ox)(w)=\liminf_{\substack{
   t\searrow 0 \\
  w'\to w
  }} {\frac{f(\ox+tw')-f(\ox)}{t}}.
\end{equation*}
When $f$ is convex, its  subdifferential    at $\ox$, denoted by $\sub f(\ox)$,  is understood in the sense of convex analysis (cf. \cite[page~214]{Roc70}), namely $v\in \sub f(\ox)$ if   $f(x)\ge  f(\ox)+\la v,x-\ox\ra $ for any $x\in \R^n$.

As pointed earlier,  a proper   function $g:\R^m\to \oR$ is called {\em polyhedral}  
if $\epi g$ is a polyhedral convex set.  According to \cite[Theorem~2.49]{rw}, this class of convex functions enjoys the  representation
\begin{equation*}\label{polyf}
g(z) = \begin{cases}
\max\limits_{j\in J}\big\{\la a^j, z \ra - \alpha_j\big\} \quad& \text{if } z \in\dom g,\\
\infty & \text{otherwise,}
\end{cases}
\end{equation*}
where $J= \{1,\ldots, l\}$ for some $l\in \N$, $a^j\in \R^m$ and $\alpha_j\in \R$  for all $j\in J$, and where  $\dom g=\{z\in \R^m|\, g(z)<\infty\}$ is a polyhedral convex set with the representation 
\begin{equation}\label{dom}
\dom g = \big\{z\in\R^m\, \big|\, \la b^i, z \ra \leq \beta_i,  i\in I=\{1, \ldots, s\} \big\},
\end{equation}
where $s\in \N$, and $b^i \in \R^m$ and $\beta_i\in \R$ for all $i\in I$. Thus, we can equivalently express a polyhedral function  $g$ as
\begin{equation}\label{polyfunc}
g(z) = \max_{j\in J}\big\{\la a^j, z \ra - \alpha_j\big\} + \delta_{\dom g}(z), \quad z\in \R^m.
\end{equation}
It was observed in \cite[Proposition~3.2]{MoS16} that $\dom g$ can be expressed as the finite  union of the polyhedral convex sets $C_j, j\in J$, defined by
\begin{equation*}\label{Cj}
C_j =\big\{z\in \dom g\, \big|\, g(z) = \la a^j, z\ra -\alpha_j\big\} =\big\{z \in \dom g\, \big|\, \la a^i -a^j, z\ra  \leq \alpha_i -\alpha_j, i\in J\big\}.
\end{equation*}
Pick $\oz \in\dom g$ and  define the sets of active indices  at $\oz$ corresponding to the representation \eqref{dom}    and  {to the expression of $\dom g$ via the finite union of $C_j$'s} by
\begin{equation}\label{indset4}
I(\oz)= \big\{i\in I\, \big|\, \la b^i, \oz\ra = \beta_i\big\}\quad\textrm{ and }\quad J(\oz)= \big\{ j\in J\, \big|\, \oz \in C_j\big\}.
\end{equation}
These sets allow us to conclude from  \cite[Proposition~3.3]{MoS16} that  the subdifferential of $g$ at $\oz$ can be calculated as
\begin{equation}\label{subg}
\partial g (\oz) = \co \big\{a^j \, \big|\,  j\in J(\oz)\big\}+\cone \big\{b^i\, \big|\, i\in I(\oz)\big\},
\end{equation}
which tells us that  any $ \olm \in   \partial g(\oz)$ can be written as  
 {\begin{equation}\label{decomp}
\olm = \sum_{j\in J(\oz)} \osigma_j a^j + \sum_{i\in I(\oz)} \otau_ib^i\quad
 \textrm{ with }\; \; \osigma_j, \otau_i \geq 0\; \textrm{ and }\;  \sum_{j\in J(\oz)} \osigma_j =1.
\end{equation}
For simplicity, we denote by $\osigma \in \R^l$ and $\otau \in \R^s$ the vectors with components $\osigma_j$ and $\otau_i$, respectively, where $\osigma_j$  is taken from \eqref{decomp} if   $j\in J(\oz)$ and  $\osigma_j = 0$ if $j\not\in J(\oz)$ and likewise, 
$\otau_i$  is taken from \eqref{decomp} if  $i\in I(\oz)$ and $\otau_i =  0$ if $i\notin I(\oz)$. Pick the given representation of $\olm$ in \eqref{decomp} and define the sets of positive coefficients at $\oz$ for $\osigma$ and $\otau$, respectively, by
\begin{equation}\label{indset}
J_+(\oz, \osigma) = \big\{j\in J(\oz)\, \big|\, \osigma_j >0\big\}\; \; \textrm{ and }\; \; I_+(\oz, \otau)=\big\{i\in I(\oz)\, \big|\, \otau_i>0\big\}.
\end{equation}
Note that the representation of $\olm$ in \eqref{decomp} is not unique. We demonstrate in the next result that  all these representations of $\olm$ enjoy an interesting  property, important for our proof of the reduction lemma in the next section. 
 A similar result was established in \cite[Theorem~3.4]{MoS16} but the neighborhood obtained therein depends on a chosen decomposition of 
 the subgradient $\olm$  in \eqref{decomp}. Next, we show that such  a neighborhood can be chosen to be independent  of  a given representation  of $\olm$.}
 We also simplify the proof presented in \cite{MoS16} significantly. 
\begin{Lemma}\label{lem:indset} Assume that $g:\R^m\to \oR$ is a polyhedral function and that  $(\oz,\olm)\in\gph\sub g$. Then there exists $r>0$ such that for any decomposition of $\olm$ as 
\eqref{decomp} and any $(z, \lm) \in \big(\gph \partial g\big)\cap \B_r(\oz, \olm)$, we have
\begin{equation}\label{indset7}
 J_+(\oz, \osigma) \subset J(z)\; \; \textrm{ and }\; \; I_+(\oz, \otau) \subset I(z). 
\end{equation}
\end{Lemma}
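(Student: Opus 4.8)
The plan is to control how the active index sets $J(\cdot)$ and $I(\cdot)$ behave near $(\oz,\olm)$ and, as the main point, to show that for $(z,\lm)$ close to $(\oz,\olm)$ in $\gph\partial g$ one has \emph{both} $\lm\in\partial g(\oz)$ and $\olm\in\partial g(z)$. This double membership pins down $g(z)-g(\oz)$, and \eqref{indset7} then drops out for every decomposition of $\olm$ at once, with a single $r$.

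First I would record the upper semicontinuity of the active sets: there is $r_1>0$ with $J(z)\subseteq J(\oz)$ and $I(z)\subseteq I(\oz)$ for all $z\in\B_{r_1}(\oz)\cap\dom g$. This is because $j\notin J(\oz)$ forces $\la a^j,\oz\ra-\alpha_j<g(\oz)$ and $i\notin I(\oz)$ forces $\la b^i,\oz\ra<\beta_i$, and finitely many strict inequalities survive small perturbations of $z$ (kept in $\dom g$). Feeding these inclusions into \eqref{subg} at $z$ and at $\oz$ yields $\partial g(z)\subseteq\partial g(\oz)$; hence, for $(z,\lm)\in\gph\partial g$ with $z\in\B_{r_1}(\oz)$ we already get $\lm\in\partial g(z)\subseteq\partial g(\oz)$.

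Next — the crux, and the step I expect to carry the real content — I would shrink $r_1$ to some $0<r\le r_1$ forcing $\olm\in\partial g(z)$ as well. By \eqref{subg}, for $z\in\B_{r_1}(\oz)\cap\dom g$ the set $\partial g(z)$ is one of the finitely many polyhedra $Q_{K,L}:=\co\{a^j\mid j\in K\}+\cone\{b^i\mid i\in L\}$ with $K\subseteq J(\oz)$, $L\subseteq I(\oz)$. For each such pair with $\olm\notin Q_{K,L}$ the number $\dist(\olm,Q_{K,L})$ is positive; I take $r$ to be at most $r_1$ and strictly smaller than each of these finitely many positive distances. If $(z,\lm)\in\gph\partial g\cap\B_r(\oz,\olm)$ had $\olm\notin\partial g(z)=Q_{J(z),I(z)}$, then $\dist(\olm,Q_{J(z),I(z)})\le\|\lm-\olm\|\le r$ would contradict the choice of $r$; so $\olm\in\partial g(z)$. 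The key feature is that this $r$ depends only on $g,\oz,\olm$ and not on any decomposition of $\olm$, which is exactly the uniformity the lemma asks for.

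Finally I would assemble the estimate. Fix $(z,\lm)\in\gph\partial g\cap\B_r(\oz,\olm)$ and set $w:=z-\oz$. Adding the subgradient inequalities arising from $\lm\in\partial g(\oz)$ and $\lm\in\partial g(z)$ forces both to be equalities, so $g(z)-g(\oz)=\la\lm,w\ra$; the same reasoning with $\olm\in\partial g(\oz)$ and $\olm\in\partial g(z)$ gives $g(z)-g(\oz)=\la\olm,w\ra$. For every $j\in J(\oz)$ we have $\la a^j,\oz\ra-\alpha_j=g(\oz)$ while $\la a^j,z\ra-\alpha_j\le g(z)$, hence $\la a^j,w\ra\le g(z)-g(\oz)=\la\olm,w\ra$; likewise $\la b^i,w\ra\le0$ for every $i\in I(\oz)$. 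Substituting any decomposition \eqref{decomp} of $\olm$ into $\la\olm,w\ra=\sum_{j\in J(\oz)}\osigma_j\la a^j,w\ra+\sum_{i\in I(\oz)}\otau_i\la b^i,w\ra$ and using $\sum_{j\in J(\oz)}\osigma_j=1$ then forces every active summand to meet its upper bound: $\osigma_j>0$ gives $\la a^j,w\ra=\la\olm,w\ra=g(z)-g(\oz)$, i.e. $\la a^j,z\ra-\alpha_j=g(z)$, i.e. $j\in J(z)$; and $\otau_i>0$ gives $\la b^i,w\ra=0$, i.e. $\la b^i,z\ra=\beta_i$, i.e. $i\in I(z)$. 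This establishes \eqref{indset7}, simultaneously for all decompositions of $\olm$.
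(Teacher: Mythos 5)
Your proof is correct, and the uniformity in the decomposition (the crux of the lemma) is handled properly: your radius $r$ is built only from $g$, $\oz$, $\olm$, namely from $r_1$ and the finitely many positive distances $\dist(\olm,Q_{K,L})$, so it works simultaneously for every representation \eqref{decomp}. The route differs from the paper's in its logical organization, though the key ingredients coincide. The paper argues by contradiction: it takes sequences $(z^k,\lm^k)\to(\oz,\olm)$ with decompositions violating \eqref{indset7}, passes to a subsequence along which $J(z^k)$ and $I(z^k)$ are constant, uses closedness of the resulting polyhedron $\Omega=\partial g(z^k)$ together with $\lm^k\to\olm$ to get $\olm\in\partial g(z^k)$, and then contradicts the subgradient inequality $\la\olm,\oz-z^k\ra\le g(\oz)-g(z^k)$ by showing the weighted sum $\sum_j\osigma_j^k\la a^j,z^k-\oz\ra+\sum_i\otau_i^k\la b^i,z^k-\oz\ra$ is strictly below $g(z^k)-g(\oz)$ when an inclusion fails. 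You instead prove directly, for an explicitly chosen $r$, that $\olm\in\partial g(z)$ for all $(z,\lm)\in\gph\partial g$ near $(\oz,\olm)$ (your distance/pigeonhole argument over the finitely many candidate subdifferentials replaces the paper's subsequence stabilization), deduce the equality $g(z)-g(\oz)=\la\olm,z-\oz\ra$ from the two subgradient inequalities, and then let the same weighted-sum algebra force each positive-coefficient term to attain its bound, which yields \eqref{indset7}. What your version buys is a constructive radius and a transparent explanation of why $r$ is independent of the decomposition; what the paper's version buys is lighter bookkeeping, since the sequential contradiction avoids naming the finitely many polyhedra $Q_{K,L}$ and estimating distances to them. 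Both arguments ultimately rest on the same three facts: local upper semicontinuity of the active index sets, membership of $\olm$ in $\partial g(z)$ for nearby graph points, and the comparison of $\la\olm,z-\oz\ra$ with $g(z)-g(\oz)$ through the decomposition.
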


\begin{proof}
Suppose by contradiction that for each $k\in \N$, there are a decomposition of $\olm  $ as
\begin{equation}\label{indset1}
\olm =  \sum_{j\in J(\oz)}\osigma_j^ka^j+ \sum_{i\in I(\oz)}\otau_i^kb^i\; \; \text{ with }\; \;  \osigma_j^k,\otau_i^k\ge 0\; \text{ and }\; \sum_{j\in J(\oz)}\osigma_j^k=1,
\end{equation}
and $(\zk,\lmk)\in\gph\sub g$ such that $(\zk,\lmk)\to(\oz,\olm)$ as $k\to\infty$ with $J_+^k\not\subset J(\zk)$ or $I_+^k\not\subset I(\zk)$, where $J_+^k:= J_+(\oz,\osigma^k)$ 
and $I_+^k:=I_+(\oz,\otau^k)$ are defined via   \eqref{indset}. Since $\zk \to \oz$ as $k\to\infty$, the inclusions  $J(\zk)\subset J(\oz)$
 and $I(\zk)\subset I(\oz)$ hold for all $k$ sufficiently large. Passing to a subsequence if necessary, we can assume that there exist subsets $\bar J\subset J(\oz)$ and $\bar I \subset I(\oz)$ such that
\begin{equation*}
J(\zk)=\bar J \quad \text{ and }\quad I(\zk)=\bar I \quad\text{ for all }k\in \N,
\end{equation*}
which, together with \eqref{subg}, lead us to 
\begin{equation*}
\partial g(\zk) = \co\big\{a^j|\, j\in \bar J\big\}+\cone\big\{b^i|\, i\in \bar I\big\}=:\Omega\quad\textrm{ for all } k\in \N.
\end{equation*}
Since $\lmk \in \partial g(\zk) = \Omega$ and $\lmk\to \olm$ as $k\to\infty$, we arrive at  $\olm\in\Omega =\partial g(\zk)$ for all  $k$ sufficiently large. Fix such a  $k\in \N$ and  deduce from $\olm \in \partial g(\zk)$ that
\begin{equation}\label{indset3}
\la \olm, \oz-\zk\ra \leq g(\oz)-g(\zk).
\end{equation}
By the decomposition in \eqref{indset1}, we obtain  
\begin{align}\label{indset2}
\la \olm, \zk - \oz\ra &= \sum_{j\in J(\oz)} \osigma_j^k\la a^j, \zk-\oz\ra +\sum_{i\in I(\oz)}\otau_i^k\la b^i, \zk - \oz\ra\nonumber\\
&= \sum_{j\in J(\oz)} \osigma_j^k\big(\la a^j, \zk\ra - \alpha_j - g(\oz)\big) +\sum_{i\in  I(\oz)}\otau_i^k \big(\la b^i, \zk\ra - \beta_i\big),\nonumber\\
&\leq \sum_{j\in J(\oz)} \osigma_j^k\big(g(\zk) - g(\oz)\big),
\end{align}
where  the second equality results from  \eqref{indset4} and the last inequality comes from  the fact that $\zk\in \dom g$, combined with  \eqref{dom}--\eqref{polyfunc}. If $J_+^k\not\subset J(\zk)$,  there exists   $\oj\in  J_+^k$ such that $\zk\notin C_\oj$, meaning that 
\begin{equation*}\label{indset5}
\osigma_\oj^k \big(\la a^\oj, \zk\ra - \alpha_\oj - g(\oz)\big) < \osigma_\oj^k \big(g(\zk) - g(\oz)\big).
\end{equation*}
If $I_+^k\not\subset I(\zk)$, we find  $\oi\in I_+^k$ such that $\la b^\oi, \zk\ra <\beta_\oi$, which implies that 
\begin{equation*}\label{indset6}
\otau_\oi^k \big(\la b^\oi, \zk\ra - \beta_\oi\big) < 0,
\end{equation*}
which tells us that  in both cases the last inequality in \eqref{indset2} is strict. Thus, our assumption that either $J_+^k\not\subset J(\zk)$ or $I_+^k\not\subset I(\zk)$, together  with the last condition in \eqref{indset1}, yields
\begin{equation*}
\la \olm, \zk - \oz\ra < \sum_{j\in J(\oz)} \osigma_j^k\big(g(\zk) - g(\oz)\big) = g(\zk) - g(\oz),
\end{equation*}
which contradicts \eqref{indset3} and therefore completes the proof.
\end{proof}

We finish this section by recording some  {first-order} variational properties of polyhedral functions, important for our developments in this paper.
Recall from \cite[Exercise~6.47]{rw} that for any polyhedral convex set $C\subset \R^n$ and $\ox\in C$, we can find  a neighborhood $\O$ of $0\in \R^n$ for which we have 
\begin{equation}\label{tanrep}
T_C(\ox)\cap \O=(C-\ox)\cap \O.
\end{equation}

\begin{Proposition}[first-order variational properties] \label{fopg}
Assume that $g:\R^m\to \oR$ is a polyhedral function and $\oz\in \dom g$. Then the following properties  hold.
\begin{itemize}[noitemsep,topsep=2pt]
\item [ \rm {(a)}] The domain of the subderivative function $\d g(\oz)$ can be calculated by 
\begin{equation*}\label{domdg}
\dom \d g(\oz) = T_{\dom g}(\oz) = \bigcup_{j\in J(\oz)} T_{C_j}(\oz).
\end{equation*}
\item [ \rm {(b)}] If $w\in T_{C_j}(\oz)$ for some $j\in J(\oz)$, then we have 
$\d g(\oz) (w) =\la a^j, w\ra$. 
Moreover, there exists $r> 0$ such that  any  $w\in T_{\dom  g}(\oz)\cap \B_r(0)$, the representation 
\begin{equation}\label{approx}
g(\oz + w) = g(\oz) +\d g(\oz)(w)
\end{equation}
holds.
\end{itemize}

\end{Proposition}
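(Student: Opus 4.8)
The plan is to compute the subderivative $\d g(\oz)$ directly from the finite‑max representation \eqref{polyfunc} and then read off both parts. Two preliminary observations carry the argument. First, since $\dom g$ is the polyhedral convex set \eqref{dom}, separating its active from its inactive constraints at $\oz$ and invoking \eqref{tanrep} yields the closed polyhedral cone $T_{\dom g}(\oz)=\{w\in\R^m\,|\,\la b^i,w\ra\le 0,\ i\in I(\oz)\}$; also recall that each $C_j$ is polyhedral. Second, for $t>0$ small and $w'$ ranging over a bounded neighborhood of a fixed point $w$, one has $\oz+tw'\in\dom g$ precisely when $w'\in T_{\dom g}(\oz)$: for $i\notin I(\oz)$ the inequality $\la b^i,\oz+tw'\ra\le\beta_i$ is strict at $t=0$ and hence persists for small $t$, while for $i\in I(\oz)$ it is equivalent to $\la b^i,w'\ra\le0$.

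For part (a), I would first show $\dom\d g(\oz)=T_{\dom g}(\oz)$. If $w\notin T_{\dom g}(\oz)$, then, that cone being closed, a whole neighborhood of $w$ avoids it, so $g(\oz+tw')=\infty$ for every small $t>0$ and every $w'$ in that neighborhood, whence every term entering the $\liminf$ that defines $\d g(\oz)(w)$ equals $+\infty$ and $w\notin\dom\d g(\oz)$. If $w\in T_{\dom g}(\oz)$, then $\oz+tw\in\dom g$ for small $t$, so $g(\oz+tw)-g(\oz)=\max_{j\in J}\{(\la a^j,\oz\ra-\alpha_j-g(\oz))+t\la a^j,w\ra\}\le t\max_{j\in J}\la a^j,w\ra$, each bracket being nonpositive because $g(\oz)=\max_{j\in J}\{\la a^j,\oz\ra-\alpha_j\}$; evaluating the $\liminf$ along the slice $w'\equiv w$ then gives $\d g(\oz)(w)\le\max_{j\in J}\la a^j,w\ra<\infty$, so $w\in\dom\d g(\oz)$. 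For the identity $T_{\dom g}(\oz)=\bigcup_{j\in J(\oz)}T_{C_j}(\oz)$, the inclusion ``$\supset$'' is clear from $C_j\subset\dom g$, and for ``$\subset$'' I would pick, via \eqref{tanrep}, a neighborhood $\O$ of $0$ serving simultaneously for $\dom g$ and for each $C_j$ with $j\in J(\oz)$, small enough that $(\oz+\O)\cap C_j=\emp$ whenever $j\notin J(\oz)$ (possible as $\oz\notin C_j$ then); using $\dom g=\bigcup_{j\in J}C_j$ one gets $T_{\dom g}(\oz)\cap\O=(\dom g-\oz)\cap\O=\bigcup_{j\in J(\oz)}(C_j-\oz)\cap\O=\big(\bigcup_{j\in J(\oz)}T_{C_j}(\oz)\big)\cap\O$, and since all the sets involved are cones and $\O$ is a neighborhood of the origin, rescaling recovers the full cones.

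For part (b), fix $w\in T_{C_j}(\oz)$ with $j\in J(\oz)$. The bound $\d g(\oz)(w)\le\la a^j,w\ra$ comes from the slice $w'\equiv w$: by \eqref{tanrep} for the polyhedral set $C_j$ one has $\oz+tw\in C_j$ for small $t$, hence $g(\oz+tw)-g(\oz)=t\la a^j,w\ra$ since $g=\la a^j,\cdot\ra-\alpha_j$ on $C_j$ and $j\in J(\oz)$. The reverse bound is convexity: $a^j\in\partial g(\oz)$ by \eqref{subg}, so $g(\oz+tw')-g(\oz)\ge t\la a^j,w'\ra$ for all $t>0$, and letting $t\searrow0$, $w'\to w$ gives $\d g(\oz)(w)\ge\la a^j,w\ra$; hence $\d g(\oz)(w)=\la a^j,w\ra$. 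For the ``moreover'' part, fix, via \eqref{tanrep}, $r>0$ with $T_{\dom g}(\oz)\cap\B_r(0)=(\dom g-\oz)\cap\B_r(0)$, $T_{C_j}(\oz)\cap\B_r(0)=(C_j-\oz)\cap\B_r(0)$ for each $j\in J(\oz)$, and $(\oz+\B_r(0))\cap C_j=\emp$ for $j\notin J(\oz)$. Given $w\in T_{\dom g}(\oz)\cap\B_r(0)$, one has $\oz+w\in\dom g=\bigcup_{j\in J}C_j$, so $\oz+w\in C_j$ for some $j\in J(\oz)$; then $w\in(C_j-\oz)\cap\B_r(0)=T_{C_j}(\oz)\cap\B_r(0)$, and $g(\oz+w)=\la a^j,\oz+w\ra-\alpha_j=g(\oz)+\la a^j,w\ra=g(\oz)+\d g(\oz)(w)$ by the formula just proved.

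I expect the only genuinely delicate step to be the identity $T_{\dom g}(\oz)=\bigcup_{j\in J(\oz)}T_{C_j}(\oz)$, since tangent cones do not commute with unions in general: the proof must exploit the exact local coincidence \eqref{tanrep} of a polyhedral set with its tangent cone together with the finiteness of $J$, and must discard the pieces $C_j$ with $j\notin J(\oz)$ using $\oz\notin C_j$. The rest — reading the tangent‑cone description off \eqref{dom}, and the routine $\liminf$ bookkeeping — presents no real difficulty.
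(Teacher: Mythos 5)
Your proof is correct, and it is more self-contained than the paper's: where the paper simply cites \cite[Proposition~10.21]{rw} for the identity $\dom \d g(\oz)=T_{\dom g}(\oz)$ and for the formula $\d g(\oz)(w)=\la a^j,w\ra$ on $T_{C_j}(\oz)$, you derive both directly from the max-representation \eqref{polyfunc}--\eqref{dom}, using the active/inactive constraint split for the domain claim and the subgradient inequality with $a^j\in\partial g(\oz)$ (via \eqref{subg}) for the lower bound on the subderivative. You also supply an explicit argument for the identity $T_{\dom g}(\oz)=\bigcup_{j\in J(\oz)}T_{C_j}(\oz)$, which the paper dismisses as immediate from $\dom g=\bigcup_{j\in J}C_j$; your use of \eqref{tanrep} simultaneously for $\dom g$ and the sets $C_j$, $j\in J(\oz)$, together with a radius avoiding the closed sets $C_j$ with $j\notin J(\oz)$ and a final rescaling of cones, is exactly the kind of care this step deserves, since tangent cones do not commute with unions in general. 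For the ``moreover'' statement \eqref{approx} your argument is essentially the paper's: both choose $r$ via \eqref{tanrep} and land $\oz+w$ in some $C_j$ with $j\in J(\oz)$; you reach that $C_j$ through $\oz+w\in\dom g$ plus the avoidance of inactive pieces, while the paper first locates $w$ in some $T_{C_{j_0}}(\oz)$ via part (a) and then applies \eqref{tanrep} for $C_{j_0}$ --- the two routes are equivalent. What your approach buys is independence from the cited external result at the cost of a slightly longer argument; no gaps.
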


\begin{proof} The first equality in (a) was established in \cite[Proposition~10.21]{rw}. The second equality results immediately from the fact that $\dom g=\bigcup_{j\in J} C_j$.
The first claim in  (b) can be found again in \cite[Proposition~10.21]{rw}. To prove  \eqref{approx}, recall that for any $j\in J(\oz)$,    $C_j$ is  a polyhedral convex set. Employing \eqref{tanrep} for these sets, we can find $r>0$ such that
\begin{equation*}
T_{C_j}(\oz)\cap \B_r(0)=(C_j-\oz)\cap \B_r(0)\quad\textrm{ for all }\; j\in J(\oz).
\end{equation*}
Pick any $w \in T_{\dom g}(\oz)\cap \B_r(0)$,  and conclude from  (a) that $w\in T_{C_\oj}(\oz)\cap \B_r(0)$ for some $\oj \in J(\oz)$ and from the first claim in (b) that $\d g(\oz)(w) =\la a^\oj, w\ra$. Thus, we get    $\oz + w \in C_\oj$, which, together with the definition of $C_{j_0}$ and \eqref{indset4}, 
leads us to 
\begin{equation*}
g(\oz+ w) = \la a^\oj, \oz+w\ra-\alpha_\oj = g(\oz) +\la a^\oj, w\ra = g(\oz)+\d g(\oz)(w),
\end{equation*}
and hence completes the proof.
\end{proof}


  
Given a function $f:\R^n \to \oR$ and a point $\ox\in\R^n$ with $f(\ox)$ finite, the critical cone of $f$ at $\ox$ for $\bar v$ with $\bar v\in   \sub f(\ox)$ is defined by 
\begin{equation*}\label{cricone}
{K_f}(\ox,\bar v)=\big\{w\in \R^n\,\big|\,\la\bar v,w\ra=\d f(\ox)(w)\big\}.
\end{equation*}
If, in addition, $f$ is convex and $\partial f(\ox)\neq\emp$, it follows from   \cite[Theorem~8.30]{rw} that  its subderivative function is  the support function of $\partial f(\ox)$, that is
\begin{equation*}\label{fsud2}
\d f(\ox)(w) = \sup\big\{\la v, w\ra\, \big|\, v\in \partial f(\ox)\big\},
\end{equation*}
which in turn allows us to equivalently describe the critical cone ${K_f}(\ox,\bar v)$ as 
\begin{equation}\label{cricone1}
K_f(\ox, \bar v) = N_{\partial f(\ox)}(\bar v).
\end{equation}

\begin{Proposition} \label{nonde1} Assume that $g:\R^m\to \oR$ is a polyhedral function and $(\oz, \olm)\in \gph \partial g$. Then the following properties are equivalent:
\begin{itemize}[noitemsep,topsep=2pt]
\item [ \rm {(a)}]  $\olm\in \ri \sub g(\oz)$;
\item [ \rm {(b)}]  the critical cone $K_g(\oz,\olm)$ is a linear subspace.
\end{itemize}
\end{Proposition}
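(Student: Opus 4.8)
The plan is to convert both conditions into statements about the single polyhedral convex set $S:=\partial g(\oz)$ and the point $\olm\in S$, using identity \eqref{cricone1} as the bridge. Since $g$ is convex and $\partial g(\oz)\ne\emp$ (it contains $\olm$), \eqref{cricone1} yields $\Kg=N_{\partial g(\oz)}(\olm)$, so it suffices to prove that $\olm\in\ri S$ if and only if $N_S(\olm)$ is a linear subspace. The first thing I would record is a general fact about the cone $N_S(\olm)$: writing $L:=(\aff S-\olm)^\perp$ for the orthogonal complement of the parallel subspace of $S$, every $v\in L$ satisfies $\la v,s-\olm\ra=0$ for all $s\in S$ and hence lies in $N_S(\olm)$, so $L\subset N_S(\olm)$ always holds. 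Since a convex cone is a linear subspace exactly when it is symmetric, $N_S(\olm)$ is a subspace if and only if $N_S(\olm)=-N_S(\olm)$, equivalently $N_S(\olm)=L$.

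For (a)$\Rightarrow$(b), assume $\olm\in\ri S$ and take any $v\in N_S(\olm)$ and any $s\in S$. Because $\olm\in\ri S$, the segment from $s$ to $\olm$ extends slightly past $\olm$ inside $S$, i.e. $\olm+t(\olm-s)\in S$ for all small $t>0$. Plugging this point into the inequality $\la v,\cdot-\olm\ra\le 0$ defining $N_S(\olm)$ and dividing by $t>0$ gives $\la v,\olm-s\ra\le 0$, which together with $\la v,s-\olm\ra\le 0$ forces $\la v,s-\olm\ra=0$. As $s\in S$ is arbitrary, $v\in L$; hence $N_S(\olm)\subset L$, and with the automatic reverse inclusion $N_S(\olm)=L$ is a linear subspace, so $\Kg$ is too.

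For (b)$\Rightarrow$(a) I would argue by contraposition. If $\olm\notin\ri S$, then proper separation of the point $\olm$ from the convex set $S$ supplies a vector $v$ with $\la v,s-\olm\ra\le 0$ for all $s\in S$ and $\la v,s_0-\olm\ra<0$ for some $s_0\in S$. The first property says $v\in N_S(\olm)$, the second that $-v\notin N_S(\olm)$; hence $N_S(\olm)$ is not symmetric and therefore not a linear subspace, and neither is $\Kg$ by \eqref{cricone1}. For a self-contained version one can replace the separation step by the polyhedral description of $S$ coming from \eqref{subg}: writing $S=\{x\mid\la c^k,x\ra\le d_k,\ k\in K\}$, the assumption $\olm\notin\ri S$ means some constraint $k_0$ that is active at $\olm$ is not an implicit equality of the system, and then $v=c^{k_0}$ does the job.

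The step I expect to be the main obstacle is precisely (b)$\Rightarrow$(a): one must invoke the \emph{relative}-interior form of proper separation / the supporting-hyperplane theorem (the ambient-interior version is too weak, since $S=\partial g(\oz)$ is typically lower-dimensional), or equivalently the characterization of $\ri S$ of a polyhedron via its implicit equality constraints. Both are standard convex-analysis facts, but which one is used affects how self-contained the write-up is; everything else reduces to elementary convexity once \eqref{cricone1} has been applied.
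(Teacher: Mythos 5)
Your proof is correct and follows the same route as the paper: both reduce the statement via \eqref{cricone1} to the assertion that $\olm\in\ri\partial g(\oz)$ iff $N_{\partial g(\oz)}(\olm)$ is a linear subspace. The paper simply cites this as a well-known fact from convex analysis, whereas you supply its (correct) proof — the line-segment extension argument for one direction and proper separation (in its relative-interior form, as you rightly note) for the other.
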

\begin{proof} By  \eqref{cricone1}, we have $\Kg=N_{\sub g(\oz)}(\olm)$. The equivalence of (a) and $N_{\sub g(\oz)}(\olm)$ being a subspace is a well-known fact from convex analysis. 
\end{proof}

The following equivalent description of the critical cone of a polyhedral function was established in \cite[Proposition~3.2]{msa18}.
\begin{Proposition} [critical cone of polyhedral functions] \label{prop:cricone} 
Assume that $g:\R^m\to \oR$ is a polyhedral function and $(\oz, \olm)\in \gph \partial g$. If $\olm $ has the decomposition in  \eqref{decomp}, then  $w\in \Kg$ if and only if 
$w$ satisfies the conditions 
\begin{equation*}\label{cricone3}
\begin{cases}
\la a^i-a^j, w\ra = 0,\quad&\textrm{for }\; i,j \in J_+(\oz, \osigma),\\
\la a^i-a^j, w\ra \leq 0 ,&\textrm{for }\; i \in J(\oz)\setminus J_+(\oz, \osigma)\; \textrm{ and }\; j \in J_+(\oz, \osigma),\\
\la b^i, w\ra = 0,&\textrm{for }\; i \in I_+(\oz, \otau),\\
\la b^i, w\ra\leq 0,&\textrm{for }\; i \in I(\oz)\setminus I_+(\oz, \otau).
\end{cases}
\end{equation*}
\end{Proposition}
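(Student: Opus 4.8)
The plan is to reduce the characterization to an explicit formula for the subderivative $\d g(\oz)$ and then read off $\Kg$ as the set of directions along which the linear functional $\la\olm,\cdot\ra$ attains that subderivative. Since $g$ is convex with $\partial g(\oz)\ne\emp$, the subderivative $\d g(\oz)$ is the support function of $\partial g(\oz)$ (as recalled just above Proposition~\ref{nonde1}); substituting the formula \eqref{subg} for $\partial g(\oz)$ gives, for every $w\in\R^m$,
\begin{equation*}
\d g(\oz)(w)=\sup\Big\{\sum_{j\in J(\oz)}\sigma_j\la a^j,w\ra+\sum_{i\in I(\oz)}\tau_i\la b^i,w\ra \;\Big|\; \sigma_j,\tau_i\ge 0,\ \textstyle\sum_{j\in J(\oz)}\sigma_j=1\Big\}.
\end{equation*}
When $\la b^i,w\ra\le 0$ for all $i\in I(\oz)$, the supremum over the $\tau$-block is attained at $\tau=0$, so $\d g(\oz)(w)=\max_{j\in J(\oz)}\la a^j,w\ra$; otherwise it is $+\infty$. (This is consistent with $\dom\d g(\oz)=T_{\dom g}(\oz)$ from Proposition~\ref{fopg}(a).) Moreover, because $\olm\in\partial g(\oz)$, the same support-function identity yields $\la\olm,w\ra\le\d g(\oz)(w)$ for every $w$, so $w\in\Kg$ if and only if $\d g(\oz)(w)\le\la\olm,w\ra$.

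Next I would insert the decomposition $\olm=\sum_{j\in J(\oz)}\osigma_j a^j+\sum_{i\in I(\oz)}\otau_i b^i$ from \eqref{decomp}, noting that $J_+(\oz,\osigma)\ne\emp$ since $\sum_{j\in J(\oz)}\osigma_j=1$. For sufficiency, if $w$ satisfies the displayed system then its last two lines force $\la b^i,w\ra\le 0$ for all $i\in I(\oz)$, hence $\d g(\oz)(w)=\max_{j\in J(\oz)}\la a^j,w\ra$ is finite; fixing $\oj\in J_+(\oz,\osigma)$, the first two lines give $\la a^j,w\ra=\la a^{\oj},w\ra$ for $j\in J_+(\oz,\osigma)$ and $\la a^i,w\ra\le\la a^{\oj},w\ra$ for $i\in J(\oz)\setminus J_+(\oz,\osigma)$, so the maximum equals $\la a^{\oj},w\ra$; since $\osigma_j=0$ off $J_+(\oz,\osigma)$, $\sum_{j\in J_+(\oz,\osigma)}\osigma_j=1$, $\otau_i=0$ off $I_+(\oz,\otau)$, and $\la b^i,w\ra=0$ on $I_+(\oz,\otau)$, one obtains $\la\olm,w\ra=\la a^{\oj},w\ra=\d g(\oz)(w)$, i.e.\ $w\in\Kg$. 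For necessity, if $w\in\Kg$ then $\d g(\oz)(w)$ is finite, which forces $\la b^i,w\ra\le 0$ on $I(\oz)$; writing $M:=\max_{j\in J(\oz)}\la a^j,w\ra=\la\olm,w\ra$, the two bounds $\sum_{j\in J(\oz)}\osigma_j\la a^j,w\ra\le M$ and $\sum_{i\in I(\oz)}\otau_i\la b^i,w\ra\le 0$ must both hold with equality; nonnegativity of the coefficients then forces $\la a^j,w\ra=M$ on $J_+(\oz,\osigma)$ and $\la b^i,w\ra=0$ on $I_+(\oz,\otau)$, which, together with $\la a^i,w\ra\le M=\la a^j,w\ra$ for $i\in J(\oz)\setminus J_+(\oz,\osigma)$ and $j\in J_+(\oz,\osigma)$ and with $\la b^i,w\ra\le 0$ for $i\in I(\oz)\setminus I_+(\oz,\otau)$, is precisely the asserted system.

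This is essentially routine convex analysis, so I do not expect a genuine obstacle. The points that deserve the most care are the split of the support function of $\partial g(\oz)$ into the ``simplex part'' over $\{a^j\}$ and the ``conic part'' over $\{b^i\}$ — the latter contributing only the domain restriction $\la b^i,w\ra\le 0$ — and the bookkeeping that the coefficients $\osigma_j$ and $\otau_i$ vanish outside $J_+(\oz,\osigma)$ and $I_+(\oz,\otau)$, so that the two scalar inequalities above collapse to equalities exactly on those index sets.
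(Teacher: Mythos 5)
Your proof is correct. Note that the paper itself does not prove Proposition~\ref{prop:cricone}: it is simply quoted from \cite[Proposition~3.2]{msa18}, so there is no internal argument to compare against. Your derivation is a legitimate self-contained route that uses only facts already recalled in the paper: the support-function identity $\d g(\oz)(w)=\sup\{\la v,w\ra\,|\,v\in\partial g(\oz)\}$ stated just before \eqref{cricone1}, and the subdifferential formula \eqref{subg}. The two delicate points are handled properly. First, the support function of $\partial g(\oz)=\co\{a^j\,|\,j\in J(\oz)\}+\cone\{b^i\,|\,i\in I(\oz)\}$ splits as $\max_{j\in J(\oz)}\la a^j,w\ra$ plus the indicator of $\{w\,|\,\la b^i,w\ra\le 0,\ i\in I(\oz)\}$, which is exactly your computation and is consistent with Proposition~\ref{fopg}(a). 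Second, in the necessity direction the equality $\la\olm,w\ra=\d g(\oz)(w)$ pins down both blocks: the convex combination $\sum_{j}\osigma_j\la a^j,w\ra$ can equal the maximum $M$ only if $\la a^j,w\ra=M$ whenever $\osigma_j>0$, and the nonpositive sum $\sum_i\otau_i\la b^i,w\ra$ can vanish only if $\la b^i,w\ra=0$ whenever $\otau_i>0$; this is precisely the asserted system, and the sufficiency direction reverses the bookkeeping correctly (with $J_+(\oz,\osigma)\neq\emp$ guaranteed by $\sum_j\osigma_j=1$). No gap.
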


\section{Reduction Lemma for Polyhedral Functions and its Applications}\label{sec03}

We begin this section by providing an extension of the reduction lemma for polyhedral functions. The reduction lemma,
   established first by Robinson in \cite[Proposition~4.4]{rob} for polyhedral convex sets,    shows 
 that the graph of the normal cone to a polyhedral convex set coincides locally with that of the normal cone to its critical cone.
 Robinson's proof of this result relies upon his sticky face lemma, which was established in \cite[Lemma~3.5]{rob2} with a rather involved proof. 
 A simpler proof of this result  was presented by Dontchev and Rockafellar in \cite[Lemma~2E.4]{DoR14}. Recently, the reduction lemma was 
 extended for an important class of convex functions,  called piecewise linear-quadratic (cf. \cite[Definition~10.20]{rw}),  by the third author in \cite[Theorem~2.3]{s20}.
 This class of convex functions clearly     encompasses polyhedral functions and thus the result below can be derived  from the recent result in \cite{s20}.
 However, the presented proof in \cite{s20} relies upon two major results: 1) Robinson's reduction lemma for polyhedral convex sets and 2) the fact that the graph 
 of subgradient mappings of convex piecewise linear-quadratic functions can be expressed as a finite union of polyhedral convex sets (see the proof of \cite[Theorem~11.14(b)]{rw}).
 Below, we present a direct proof of   the reduction lemma for polyhedral functions, which is solely based on Lemma~\ref{lem:indset}.

\begin{Theorem}[reduction lemma]\label{thm:reduction}
Assume that $g:\R^m\to \oR$ is a polyhedral function and $(\oz, \olm)\in \gph \partial g$.  Then there exists $r>0$ such that
\begin{equation}\label{reduction}
\big( (\gph \sub g ) - (\oz, \olm)\big)\cap \B_r(0,0) = \big(\gph N_\Kg\big)\cap \B_r(0,0).
\end{equation}
\end{Theorem}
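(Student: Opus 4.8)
The plan is to prove the two inclusions in \eqref{reduction} separately, reducing both of them to a single local identity: for every $z\in\dom g$ close enough to $\oz$ with $\olm\in\partial g(z)$, $w:=z-\oz\in\Kg$, and the active‑index inclusions $J_+(\oz,\osigma)\subseteq J(z)\subseteq J(\oz)$, $I_+(\oz,\otau)\subseteq I(z)\subseteq I(\oz)$ in force, I would show
\[
N_{\Kg}(w)=T_{\partial g(z)}(\olm).
\]
Granting this, the forward inclusion is immediate: if $(z,\lm)\in(\gph\partial g)\cap\B_r(\oz,\olm)$ and the above conditions hold, then $\lm-\olm\in\partial g(z)-\olm\subseteq T_{\partial g(z)}(\olm)=N_{\Kg}(w)$, so $(w,\lm-\olm)\in\gph N_{\Kg}$. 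For the reverse inclusion I start from $(w,\mu)\in(\gph N_{\Kg})\cap\B_r(0,0)$, set $z:=\oz+w$, and get $\mu\in T_{\partial g(z)}(\olm)$ from the identity; then \eqref{tanrep} applied to the polyhedral convex set $\partial g(z)$ at $\olm$ (for $\|\mu\|$ small, uniformly over the finitely many sets $\partial g(z)$ that can occur) yields $\olm+\mu\in\partial g(z)$, i.e.\ $(\oz+w,\olm+\mu)\in\gph\partial g$. So the proof reduces to (i) checking that the listed qualitative conditions hold in both situations and (ii) proving the cone identity.

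For (i): after shrinking $r$, the inclusions $J(z)\subseteq J(\oz)$ and $I(z)\subseteq I(\oz)$ hold whenever $z$ is near $\oz$, by closedness of the $C_j$'s and strictness of the inactive inequalities in \eqref{dom}. In the forward case, fix a decomposition \eqref{decomp} of $\olm$; then $J_+(\oz,\osigma)\subseteq J(z)$ and $I_+(\oz,\otau)\subseteq I(z)$ are exactly Lemma~\ref{lem:indset}, and expanding $\la\olm,z-\oz\ra$ via \eqref{decomp} and using \eqref{dom}, \eqref{polyfunc}, \eqref{indset4} together with these inclusions collapses the sum to $g(z)-g(\oz)$, which equals $\d g(\oz)(z-\oz)$ by Proposition~\ref{fopg}(b); hence $w\in\Kg$, and since $\olm\in\partial g(\oz)$ this in turn gives $g(x)-g(z)\ge\la\olm,x-z\ra$ for all $x$, i.e.\ $\olm\in\partial g(z)$. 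In the reverse case, $\Kg\subseteq T_{\dom g}(\oz)$ by Proposition~\ref{fopg}(a), so \eqref{tanrep} gives $z:=\oz+w\in\dom g$ for $\|w\|$ small; the inclusions $J_+(\oz,\osigma)\subseteq J(z)$, $I_+(\oz,\otau)\subseteq I(z)$ follow from Proposition~\ref{prop:cricone} by verifying that each defining inequality of the relevant $C_j$ and of $\dom g$ stays valid at $\oz+w$; and Proposition~\ref{fopg}(b) gives $g(z)=g(\oz)+\la\olm,w\ra$, hence $\olm\in\partial g(z)$.

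For (ii), fix $j_0\in J_+(\oz,\osigma)$, nonempty by \eqref{decomp}. Proposition~\ref{prop:cricone} lets one write $\Kg$ as the polyhedral cone $\{v:\la a^i-a^{j_0},v\ra=0\ (i\in J_+),\ \la a^i-a^{j_0},v\ra\le0\ (i\in J(\oz)\setminus J_+),\ \la b^i,v\ra=0\ (i\in I_+),\ \la b^i,v\ra\le0\ (i\in I(\oz)\setminus I_+)\}$. Using the elementary equivalences $\la a^i-a^{j_0},w\ra=0\iff i\in J(z)$ for $i\in J(\oz)$, and $\la b^i,w\ra=0\iff i\in I(z)$ for $i\in I(\oz)$, together with the standard description of the normal cone to a polyhedral cone at a point as the cone generated by the normals of the active constraints, I obtain
\[
N_{\Kg}(w)=\spann\{a^i-a^{j_0}:i\in J_+\}+\spann\{b^i:i\in I_+\}+\cone\{a^i-a^{j_0}:i\in J(z)\setminus J_+\}+\cone\{b^i:i\in I(z)\setminus I_+\}.
\]
On the other side, \eqref{subg} exhibits $\partial g(z)$ as the image of the product of the unit simplex in $\R^{J(z)}$ and the orthant $\R^{I(z)}_+$ under the linear map $(\theta,\eta)\mapsto\sum_{i\in J(z)}\theta_ia^i+\sum_{i\in I(z)}\eta_ib^i$; since tangent cones to polyhedra commute with linear images, $T_{\partial g(z)}(\olm)$ is the image of the tangent cones to the simplex and to the orthant at the preimage $(\osigma|_{J(z)},\otau|_{I(z)})$ of $\olm$, whose supports are $J_+$ and $I_+$. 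Writing $\sum_i d_ia^i=\sum_i d_i(a^i-a^{j_0})$ to eliminate the constraint $\sum_i d_i=0$, this image is exactly the right‑hand side displayed above, which proves the identity and hence \eqref{reduction}.

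The step I expect to be most delicate is (ii), and within it the index bookkeeping in the reverse direction — deriving $J_+(\oz,\osigma)\subseteq J(\oz+w)$ and the two equivalences ``active constraint $\iff$ index in $J(z)$/$I(z)$'' purely from Proposition~\ref{prop:cricone} — together with matching the two generator lists so that the auxiliary index $j_0$ drops out cleanly. A shorter but less self‑contained route would bypass (ii): by Proposition~\ref{fopg} and \eqref{tanrep} one has $g(\oz+w)=g(\oz)+\d g(\oz)(w)$ for small $w$, hence $\partial g(\oz+w)=\partial(\d g(\oz))(w)$; since $\d g(\oz)$ is the support function of $\partial g(\oz)$, its subdifferential is the inverse of $N_{\partial g(\oz)}$, and applying \eqref{tanrep} to $\partial g(\oz)$ at $\olm$ reduces $\gph N_{\partial g(\oz)}$ near $(\olm,0)$ to $\gph N_{T_{\partial g(\oz)}(\olm)}$, which after swapping coordinates and using $(T_{\partial g(\oz)}(\olm))^{\circ}=N_{\partial g(\oz)}(\olm)=\Kg$ becomes precisely $\gph N_{\Kg}$ near $(0,0)$.
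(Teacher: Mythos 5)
Your proposal is correct, but it is organized around a different pivot than the paper's argument. The paper proves the inclusion $\big((\gph\sub g)-(\oz,\olm)\big)\supset\gph N_{\Kg}$ locally by a two-line subgradient-inequality argument using \eqref{tanrep} for $\partial g(\oz)$ and the local exactness \eqref{approx}, and proves the opposite inclusion by verifying separately the three conditions defining $\gph N_{\Kg}$: $w\in\Kg$ via a case-by-case check against Proposition~\ref{prop:cricone}, $u\in\Kg^*$ from $\partial g(\oz+w)\subset\partial g(\oz)$ and \eqref{tanrep}, and $\la u,w\ra=0$ by an explicit computation with a decomposition of $\olm+u$ at $\oz+w$. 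You instead funnel both inclusions through the local identity $N_{\Kg}(z-\oz)=T_{\partial g(z)}(\olm)$, proved by matching generators: the active-constraint description of $N_{\Kg}(z-\oz)$ obtained from Proposition~\ref{prop:cricone} against the linear-image description of $T_{\partial g(z)}(\olm)$ coming from \eqref{subg} (the index bookkeeping you flag does go through, since for $z=\oz+w$ with $w\in\Kg$ small one has $i\in J(z)\Leftrightarrow\la a^i-a^{j_0},w\ra=0$ and $i\in I(z)\Leftrightarrow\la b^i,w\ra=0$, and the tangent cone of a linear image of a polyhedron is the image of the tangent cone at any preimage). This buys you the hard inclusion in one line, from $\partial g(z)-\olm\subset T_{\partial g(z)}(\olm)$, instead of the paper's three separate verifications; the price is a heavier computation for the identity itself and a uniform-radius argument over the finitely many possible sets $\partial g(z)$ in the direction where the paper's subgradient-inequality argument is shorter. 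Both proofs run on the same fuel (Lemma~\ref{lem:indset}, Proposition~\ref{fopg}, Proposition~\ref{prop:cricone}, \eqref{tanrep}, \eqref{subg}), and your identity essentially front-loads information the paper only extracts afterwards in Corollary~\ref{polarity} and Proposition~\ref{lem:faces}. Your closing shorter route is also sound and genuinely different: since the convex functions $g(\oz+\cdot)$ and $g(\oz)+\d g(\oz)(\cdot)$ agree on a neighborhood of $0$ by \eqref{approx} and \eqref{tanrep}, their subdifferentials agree near $0$, and $\d g(\oz)$ being the support function of $\partial g(\oz)$ turns $\gph\sub g$ near $(\oz,\olm)$ into the inverse of $\gph N_{\partial g(\oz)}$ near $(\olm,0)$, which \eqref{tanrep} and cone polarity reduce to $\gph N_{\Kg}$; note, though, that this bypasses Lemma~\ref{lem:indset} entirely and in effect rederives the set-level reduction lemma for the single polyhedral set $\partial g(\oz)$, whereas the paper's stated purpose was a proof based solely on that lemma.
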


\begin{proof}
Suppose that the polyhedral function $g$  has the representation   \eqref{polyfunc}. We first show that there is $r>0$ for which
the inclusion `$\supset$' in \eqref{reduction} holds. To do so, using  \eqref{tanrep} for the polyhedral convex set $\partial g(\oz)$, we find $r_0>0$ such that 
\begin{equation}\label{relem1}
T_{\partial g(\oz)}(\olm)\cap \B_{r_0}(0) = \big(\partial g(\oz) -\olm\big)\cap \B_{r_0}(0).
\end{equation}
Pick now a pair $(w, u)$ from the right-hand side of \eqref{reduction} with $r=r_0$ therein, which implies that  the conditions $w\in \Kg\cap\B_{r_0}(0)$, $u \in \Kg^*\cap \B_{r_0}(0)$, and $\la u, w\ra =0$
are satisfied. Moreover, using \eqref{cricone1} for the polyhedral function $g$, we get $\Kg^*= T_{\partial g(\oz)}(\olm)$. Therefore $u$ belongs to the left-hand side of \eqref{relem1}, and thus $\olm+u \in\partial g(\oz)$. We now show that $(\oz +w, \olm +u) \in\gph\partial g$. To this end, take $z\in \R^m$   and observe that 
\begin{equation}\label{relem2}
\la \olm+u, z- (\oz+w)\ra = \la \olm+u, z-\oz\ra -\la\olm, w\ra - \la u, w\ra\leq g(z)-g(\oz) - \d g(\oz)(w),
\end{equation}
where the last inequality is deduced from the facts that $\olm +u \in \partial g(\oz)$, $w\in \Kg$, and $\la u, w\ra =0$. Choosing a smaller radius $r_0$ if necessary, we can assume that  \eqref{approx} is valid on
 $T_{\dom g}(\oz)\cap \B_{r_0}(0)$. It follows from Proposition~\ref{fopg}(a) and $w\in \Kg\cap \B_{r_0}(0)$ that $w\in T_{\dom g}(\oz)\cap \B_{r_0}(0)$, which in turn allows us to conclude via \eqref{approx} that
  $g(\oz)+\d g(\oz)(w) = g(\oz+w)$. Combining this and  \eqref{relem2} leads us to 
\begin{equation*}
\la \olm+u, z- (\oz+w)\ra \leq g(z)-g(\oz+w)
\end{equation*}
for any arbitrary $z\in \R^m$, and thus  to $\olm+u \in \partial g(\oz+w)$. This proves that  $(w, u)$ belongs to the left-hand side of \eqref{reduction} with $r=r_0$.

We now proceed with proving the inclusion `$\subset$' in \eqref{reduction} for some $r>0$. Decompose $\olm$ into $\olm = \olm_1+\olm_2$ with $\olm_1, \olm_2$ taken from  \eqref{decomp}. 
Pick $r_0$ from \eqref{relem1} and choose $r\in (0, r_0]$  such that the inclusions in \eqref{indset7} hold for all $(z, v)\in \big(\gph\partial g\big)\cap\B_{r}(\oz, \olm)$. Pick now a pair $(w, v)$ from the left-hand side of \eqref{reduction}, and conclude that  
 $(w, u) \in \B_{r}(0, 0)$ with $\olm+u \in \partial g(\oz+w)$. We are going to show that  $u\in N_\Kg(w)$, which amounts via \cite[Proposition~2A.3]{DoR14} to the conditions
\begin{equation}\label{relem4}
w\in \Kg, \quad u\in \Kg^*, \quad\textrm{ and }\quad \la u, w\ra =0.
\end{equation}
Taking a smaller radius $r$ if necessary, we can assume that $\partial g(\oz+w)\subset \partial g(\oz)$ (cf. \cite[Proposition~3.3(a)]{MoS16}). Thus, $u \in \big(\partial g(\oz) - \olm\big)\cap \B_{r}(0)$. Since $r\leq r_0$, we deduce from  \eqref{relem1} that 
 $u\in T_{\partial g(\oz)}(\olm)= \Kg^*$, which justifies the second inclusion  in \eqref{relem4}. By shrinking $r$ again if necessary, we can derive  from  $(\oz +w, \olm+u) \in  \big(\gph\partial g\big)\cap\B_{r}(\oz, \olm)$ and \eqref{indset7} that 
\begin{equation*}
J_+(\oz, \osigma) \subset J(\oz+w)\subset J(\oz) \quad\textrm{ and }\quad I_+(\oz, \otau)\subset I(\oz+w)\subset I(\oz).
\end{equation*}
 To prove that $w\in \Kg$, we use  Proposition~\ref{prop:cricone} in which an equivalent description of $\Kg$ was given. We break this task into four cases as follows:
\begin{itemize}[noitemsep,topsep=2pt]
\item [ \rm {(i)}]   $i, j \in J_+(\oz, \osigma)$. In this case, we obtain from  the inclusions above  that $i, j \in  J(\oz+w)\subset J(\oz)$. These inclusions bring us via \eqref{indset4} to  
\begin{equation*}
\la a^i, \oz\ra -\alpha_i = \la a^j, \oz\ra -\alpha_j\quad\textrm{ and }\quad
\la a^i, \oz+w\ra -\alpha_i = \la a^j, \oz+w\ra -\alpha_j.
\end{equation*}
Combining  these confirms that $\la a^i-a^j, w\ra  =0$ for all $i, j \in J_+(\oz, \osigma)$.

\item [ (ii)]  $i\in J(\oz)\setminus J_+(\oz, \osigma)$ and $j\in J_+(\oz, \osigma)$. In this case, we arrive at $i, j\in J(\oz)$ and $j\in J(\oz+v)$, which imply via \eqref{indset4} that  
\begin{equation*}
\la a^i, \oz\ra -\alpha_i = \la a^j, \oz\ra -\alpha_j\quad\textrm{ and }\quad
\la a^i, \oz+w\ra -\alpha_i \leq \la a^j, \oz+w\ra -\alpha_j.
\end{equation*}
Combining  these confirms that $\la a^i-a^j, w\ra  \le 0$ for all $i\in J(\oz)\setminus J_+(\oz, \osigma)$ and $j\in J_+(\oz, \osigma)$.

\item [{(iii)}]  $i\in I_+(\oz, \otau)$. In this case, we have $i\in I(\oz)$ and $i \in I(\oz+w)$, which result in $\la b^i, \oz\ra =\la b^i, \oz+w\ra=\beta_i$. Combining  these confirms that $\la b^i, w\ra=0$ for all $i\in I_+(\oz, \otau)$.

\item[(iv)]  $i\in I(\oz)\setminus I_+(\oz, \otau)$. In this case,  we deduce from $\oz+w\in \dom g$ and $i\in I(\oz)$ that $\la b^i, \oz+w\ra \leq \beta_i=\la b^i, \oz\ra$, which in turn yields  
$\la b^i, w\ra\le 0$ for all $i\in I(\oz)\setminus I_+(\oz, \otau)$.
\end{itemize}
In summary, we showed that $w$ satisfies the equivalent description of $\Kg$ from Proposition~\ref{prop:cricone}, and so $w\in \Kg$. 
It remains to demonstrate that $\la w,u\ra=0$. Recall that $\olm+u\in \partial g(\oz+w)$. Using \eqref{subg}, we can express $\olm+u$ as
\begin{equation*}\label{relem5}
\olm + u = \sum_{j\in J(\oz+w)} \sigma_j a^j+ \sum_{i\in I(\oz+w)} \tau_ib^i\; \; \textrm{ with }\; \; \sigma_j, \tau_i \geq 0\; \textrm{ and }\;  \sum_{j\in J(\oz+w)} \sigma_j =1.
\end{equation*}
This and the decomposition $\olm = \olm_1+\olm_2$ with $\olm_1,\olm_2$ taken from \eqref{decomp} allow us to conclude that  
\begin{align*}
\la u, w\ra &= \la \olm+u, w\ra-\la \olm, w\ra\\
&= \sum_{j\in J(\oz+w)} (\sigma_j-\osigma_j)\la a^j, w\ra+ \sum_{i\in I(\oz+w)} (\tau_i-\otau_i)\la b^i, w\ra\\
&= \sum_{j\in J(\oz+w)} (\sigma_j-\osigma_j)\big[\la a^j, \oz+w\ra-\alpha_j -(\la a^j, \oz\ra-\alpha_j)\big]+ \sum_{i\in I(\oz+w)} (\tau_i-\otau_i)\big[\la b^i, \oz+w\ra - \la b^i, \oz\ra\big]\\
&= \big(g(\oz+w)-g(\oz)\big)\sum_{j\in J(\oz+w)} (\sigma_j-\osigma_j)+ \sum_{i\in I(\oz+w)} (\tau_i-\otau_i)(\beta_i-\beta_i)\\
&=0,
\end{align*}
where  the second equality  results from the inclusions $J_+(\oz, \osigma)\subset J(\oz+w)$ and  $ I_+(\oz, \otau)\subset I(\oz+w)$, 
 where   the fourth equality comes from the  definitions of the active index sets $J(\oz+w)$ and $I(\oz+w)$ as well as the inclusions $J(\oz+w)\subset J(\oz)$ and $I(\oz+w)\subset I(\oz)$, 
 and where  the last equality follows from the assumption on $\sigma_j$ and $\osigma_j$. This proves the inclusion `$\subset$' in \eqref{reduction} for some $r>0$ and thus completes the proof. 
\end{proof}

The rest of this section will be devoted to presenting several important consequences of Theorem~\ref{thm:reduction} for different second-order variational constructions of polyhedral functions. 
We begin with a duality relation between critical cones of a polyhedral function and   its (Fenchel) conjugate in the sense of convex analysis.
Both results in the following corollary were observed by Rockafellar in \cite[equations~(3.23) \& (3.29)]{r22}. Since an explicit proof was not presented for \eqref{cricone2}
in \cite{r22} and since this result plays an important role in strict twice epi-differentiability of polyhedral functions in the next section, we supply a short proof for readers' convenience.
Let us recall an important result, used in the proof of next corollary, saying that the conjugate function $g^*$ of  a polyhedral function $g$ is again a polyhedral function, cf. \cite[Theorem~11.14(a)]{rw}. 

\begin{Corollary}[polar relation of critical cones] \label{polarity}
Assume that $g:\R^m\to \oR$ is a polyhedral function and $(\oz, \olm)\in \gph \partial g$.  Then $\Kg$ enjoys the duality relationship 
\begin{equation}\label{cricone2}
\Kg =  K_{g^*}(\olm, \oz)^*.
\end{equation}
Consequently, we have 
\begin{equation}\label{ricond}
\olm \in \ri \partial g(\oz) \; \textrm{ if and only if } \; \oz \in \ri \partial g^*(\olm).
\end{equation}
\end{Corollary}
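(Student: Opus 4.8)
The key identity to establish is the polar relation \eqref{cricone2}, after which \eqref{ricond} follows quickly. The plan is to exploit the symmetry between $g$ and $g^*$ together with the well-known subgradient inversion rule from convex analysis: for a proper closed convex $g$, one has $\olm \in \partial g(\oz) \iff \oz \in \partial g^*(\olm)$, equivalently $\gph \partial g^* = \{(\olm,\oz) \mid (\oz,\olm) \in \gph \partial g\}$. Since $g$ is polyhedral, $g^*$ is also polyhedral by \cite[Theorem~11.14(a)]{rw}, so Theorem~\ref{thm:reduction} applies to both $g$ at $(\oz,\olm)$ and $g^*$ at $(\olm,\oz)$. First I would recall from \eqref{cricone1} that $K_g(\oz,\olm) = N_{\partial g(\oz)}(\olm)$ and $K_{g^*}(\olm,\oz) = N_{\partial g^*(\olm)}(\oz)$.

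The cleanest route is via the local structure of $\gph \partial g$. Apply Theorem~\ref{thm:reduction} to $g$: there is $r>0$ with $\big((\gph \partial g) - (\oz,\olm)\big) \cap \B_r(0,0) = (\gph N_{K_g(\oz,\olm)}) \cap \B_r(0,0)$. Apply it also to $g^*$ at $(\olm,\oz)$: shrinking $r$, $\big((\gph \partial g^*) - (\olm,\oz)\big) \cap \B_r(0,0) = (\gph N_{K_{g^*}(\olm,\oz)}) \cap \B_r(0,0)$. Now use that $\gph \partial g^*$ is the ``transpose'' of $\gph \partial g$: $(u,w) \in (\gph \partial g^*) - (\olm,\oz)$ iff $(w,u) \in (\gph \partial g) - (\oz,\olm)$. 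Intersecting with the ball (whose norm is symmetric under swapping coordinates), this gives that $\gph N_{K_{g^*}(\olm,\oz)}$ coincides near the origin with the transpose of $\gph N_{K_g(\oz,\olm)}$. But for a \emph{cone} $K$, the graph of $N_K$ is itself a cone, so coinciding near the origin forces global equality: $\gph N_{K_{g^*}(\olm,\oz)} = \{(u,w) \mid (w,u) \in \gph N_{K_g(\oz,\olm)}\}$. For closed convex cones $K_1, K_2$, having $\gph N_{K_1}$ equal to the transpose of $\gph N_{K_2}$ forces $K_2 = K_1^*$ (for instance, intersect with $\{u=0\}$: on one side this yields $\{(w,0) \mid w \in K_1\}$, on the other $\{(w,0)\mid w \in N_{K_2}(0)\} = \{(w,0)\mid w \in K_2^*\}$, whence $K_1 = K_2^*$, i.e.\ $K_2 = K_1^*$). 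Taking $K_1 = K_g(\oz,\olm)$ and $K_2 = K_{g^*}(\olm,\oz)$ yields \eqref{cricone2}.

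An alternative, perhaps more direct, derivation of \eqref{cricone2} avoids the reduction lemma: since $g$ is convex and polyhedral, $\d g(\oz)$ is the support function of $\partial g(\oz)$, so $K_g(\oz,\olm) = N_{\partial g(\oz)}(\olm)$ by \eqref{cricone1}; by a symmetric computation $K_{g^*}(\olm,\oz) = N_{\partial g^*(\olm)}(\oz)$. Now $\partial g(\oz)$ and $\partial g^*(\olm)$ are polyhedral convex sets, and using the conjugacy of $g$ and $g^*$ one shows $T_{\partial g(\oz)}(\olm)$ and $T_{\partial g^*(\olm)}(\oz)$ are mutually polar: indeed $T_{\partial g(\oz)}(\olm) = \cl\,\cone(\partial g(\oz) - \olm)$, and a standard convex-analysis fact (e.g.\ via \cite[Corollary~23.5.3]{Roc70}) relates this cone to the critical cone $K_{g^*}(\olm,\oz)$ and its polar. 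Then $K_g(\oz,\olm) = N_{\partial g(\oz)}(\olm) = T_{\partial g(\oz)}(\olm)^* = \big(K_{g^*}(\olm,\oz)^*\big)^* = K_{g^*}(\olm,\oz)^*$ by bipolarity (polyhedral cones are closed). I would present whichever of these is shorter; the reduction-lemma route has the advantage of relying only on results already in the paper.

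For \eqref{ricond}, combine \eqref{cricone2} with Proposition~\ref{nonde1}: $\olm \in \ri \partial g(\oz)$ iff $K_g(\oz,\olm)$ is a linear subspace. By \eqref{cricone2}, $K_g(\oz,\olm)$ is a subspace iff its polar $K_{g^*}(\olm,\oz)$ is a subspace (the polar of a closed convex cone is a subspace precisely when the cone is a subspace). Applying Proposition~\ref{nonde1} to the polyhedral function $g^*$ at the pair $(\olm,\oz) \in \gph \partial g^*$, the latter holds iff $\oz \in \ri \partial g^*(\olm)$, which gives \eqref{ricond}. The main obstacle I anticipate is the bookkeeping in identifying $\gph \partial g^*$ as the coordinate-swap of $\gph \partial g$ and making sure the ball-intersection in Theorem~\ref{thm:reduction} behaves symmetrically — but since the product norm $\|(w,u)\| = \sqrt{\|w\|^2 + \|u\|^2}$ is manifestly invariant under swapping the two blocks, this is routine once stated carefully.
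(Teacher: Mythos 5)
Your main (reduction-lemma) route is correct, and it differs from the paper's argument in mechanism even though both rest on Theorem~\ref{thm:reduction} together with the inversion rule $\olm\in\sub g(\oz)\Leftrightarrow\oz\in\sub g^*(\olm)$ and the polyhedrality of $g^*$. The paper applies the reduction lemma only to $g$: it characterizes $w\in\Kg$ by $t(w,0)\in\gph N_\Kg$ for small $t>0$, converts this via \eqref{reduction} into $\olm\in\sub g(\oz+tw)$, hence $\oz+tw\in\sub g^*(\olm)$ for small $t>0$, and then reads off $w\in T_{\sub g^*(\olm)}(\oz)=N_{\sub g^*(\olm)}(\oz)^*=K_{g^*}(\olm,\oz)^*$, using that tangent cones to polyhedral convex sets are cones of feasible directions. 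You instead apply the reduction lemma to both $g$ at $(\oz,\olm)$ and $g^*$ at $(\olm,\oz)$, observe that $\gph\sub g^*$ is the coordinate swap of $\gph\sub g$, and use that graphs of normal-cone mappings of cones are themselves cones to upgrade the local coincidence on $\B_r(0,0)$ to the global identity $\gph N_{K_{g^*}(\olm,\oz)}=\{(u,w)\,|\,(w,u)\in\gph N_\Kg\}$; your slice argument with $u=0$ then correctly yields $\Kg=K_{g^*}(\olm,\oz)^*$ (note $0\in N_{K}(w)$ for every $w\in K$ and $N_{K}(0)=K^*$). This costs a second application of the reduction lemma and the small auxiliary cone lemma, but buys a stronger intermediate statement, namely the relation $N_{K^*}=(N_K)^{-1}$ for the critical cones, which is in fact equivalent to \eqref{cricone2}. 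Your derivation of \eqref{ricond} from \eqref{cricone2} via Proposition~\ref{nonde1} applied to $g$ and to $g^*$, together with the fact that a polyhedral cone is a subspace exactly when its polar is, coincides with the paper's. Only your sketched ``alternative'' second route (mutual polarity of $T_{\sub g(\oz)}(\olm)$ and $T_{\sub g^*(\olm)}(\oz)$ via conjugacy) is left too vague to count as a proof on its own, but since you present the reduction-lemma route as primary and it is complete, this is not a gap.
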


\begin{proof}
Let $r>0$ be such that  \eqref{reduction} holds. Then, we conclude from \eqref{cricone1} that 
\begin{align*}
w \in \Kg \; &\Longleftrightarrow\; (w, 0)\in\gph N_\Kg\\
&\Longleftrightarrow\; \exists\, r'>0: t(w, 0)\in\big(\gph N_\Kg\big)\cap \B_r(0,0)\;\; \textrm{ for all } \; t\in [0, r')\\
&\Longleftrightarrow\; \exists\, r'>0: (\oz + t w, \olm)\in \big(\gph \partial g\big)\cap \B_r(\oz, \olm)\;\; \textrm{ for all } \; t\in [0, r')\\
&\Longleftrightarrow\; \exists\, r'>0: \olm \in\partial g(\oz+tw)\;\; \textrm{ for all } \; t\in [0, r')\\
&\Longleftrightarrow\; \exists\, r'>0: \oz+tw \in\partial g^*(\olm)\;\; \textrm{ for all } \; t\in [0, r')\\
&\Longleftrightarrow\; w\in T_{\partial g^*(\olm)}(\oz)=\big(N_{\sub g^*(\olm)}(\oz)\big)^*=K_{g^*}(\olm, \oz)^*,
\end{align*}
where the third equivalence follows from \eqref{reduction} and the last  {equivalence relies on the polyhedrality of $\partial g^*(\olm)$}.
To justify  \eqref{ricond}, it follows from Proposition~\ref{nonde1} that $\Kg$ is a linear subspace if and only if $\olm\in \ri \sub g(\oz)$.
A similar observation can be made for $K_{g^*}(\olm, \oz)$, which together with \eqref{cricone2} completes the proof of  \eqref{ricond}.
\end{proof}

We proceed to characterize critical cones of  polyhedral functions for  
points nearby a given point in the graph of their subgradient mappings. Such a result for polyhedral convex sets can be found in \cite[Lemma~4H.2]{DoR14}.
Recall that   a closed face $F$ of a  polyhedral convex cone $C\subset \R^d$ is defined by 
$$
F=C\cap [v]^\perp\quad \mbox{for some}\;\; v\in C^*.
$$
\begin{Proposition} \label{lem:faces}
Assume that $g:\R^m\to \oR$ is a polyhedral function and $(\oz, \olm)\in \gph \partial g$.
Then there exists $r>0$ such that for all $(z, \lm)\in \big(\gph \partial g\big)\cap \B_r(\oz, \olm)$ the corresponding critical cone $\Kgzv$ enjoys the following representation
\begin{equation*}
\Kgzv = F_1 -F_2,
\end{equation*}
for some faces $F_1, F_2$ of $\Kg$ with $F_2\subset F_1$. Conversely, for any pair of faces $F_1, F_2$ of $\Kg$ with $F_2\subset F_1$ and any sufficiently small real number $r>0$, there exists $(z, \lm)\in \big(\gph \partial g\big)\cap \B_r(\oz, \olm)$ with $\Kgzv = F_1-F_2$.
\end{Proposition}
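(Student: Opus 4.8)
The plan is to reduce the statement to the known difference‑of‑faces description of critical cones for a polyhedral convex cone, using the reduction lemma (Theorem~\ref{thm:reduction}) as the bridge. Put $K:=\Kg$, a polyhedral convex cone; then $\delta_K$ is a polyhedral function with $\partial\delta_K=N_K$, and \cite[Lemma~4H.2]{DoR14} applies to $K$. By Theorem~\ref{thm:reduction} there is $r_0>0$ with
\[
\big(\gph\partial g-(\oz,\olm)\big)\cap\B_{r_0}(0,0)=\big(\gph N_K\big)\cap\B_{r_0}(0,0).
\]

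The key step is a transfer claim: for every $(z,\lm)\in\gph\partial g$ with $\|(z,\lm)-(\oz,\olm)\|<r_0$, writing $w:=z-\oz$ and $u:=\lm-\olm$, one has $(w,u)\in\gph N_K$ and $\Kgzv=T_K(w)\cap[u]^\perp$, the critical cone of the polyhedral set $K$ at $(w,u)$. To prove it, I would first localize the displayed identity: with $\rho:=r_0-\|(w,u)\|>0$ we have $\B_\rho(w,u)\subset\B_{r_0}(0,0)$, so $\gph\partial g$ and $(\oz,\olm)+\gph N_K$ share the same germ at $(z,\lm)$; in particular the convex set $\partial g(z)$ coincides near $\lm$ with $\olm+N_K(w)$. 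Since the normal cone to a convex set at a point depends only on the germ of the set there and is translation invariant, \eqref{cricone1} gives
\[
\Kgzv=N_{\partial g(z)}(\lm)=N_{N_K(w)}(u)=K_{\delta_K}(w,u)=T_K(w)\cap[u]^\perp.
\]
(Alternatively, re-apply Theorem~\ref{thm:reduction} at $(z,\lm)$ for $g$ and at $(w,u)$ for $\delta_K$ and compare the two graphs, using that a closed convex cone $C$ is recovered from $\gph N_C$ near $(0,0)$ through $C=\{w\mid(tw,0)\in\gph N_C\text{ for all small }t>0\}$.)

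The first assertion is then immediate: \cite[Lemma~4H.2]{DoR14}, applied to the polyhedral cone $K$ at the base point $(0,0)$---whose associated critical cone is $T_K(0)\cap[0]^\perp=K$---supplies a neighborhood of $(0,0)$ on which every critical cone $T_K(w)\cap[u]^\perp$ with $(w,u)\in\gph N_K$ has the form $F_1-F_2$ for closed faces $F_1,F_2$ of $K=\Kg$ with $F_2\subset F_1$. Shrinking $r_0$ to some $r\in(0,r_0)$ with $\B_r(\oz,\olm)-(\oz,\olm)$ contained in that neighborhood and invoking the transfer claim yields $\Kgzv=F_1-F_2$ for all $(z,\lm)\in(\gph\partial g)\cap\B_r(\oz,\olm)$.

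For the converse, fix closed faces $F_1,F_2$ of $K=\Kg$ with $F_2\subset F_1$. The converse part of \cite[Lemma~4H.2]{DoR14}, applied to $K$ at $(0,0)$, provides for every sufficiently small $\rho>0$ a pair $(w,u)\in(\gph N_K)\cap\B_\rho(0,0)$ with $T_K(w)\cap[u]^\perp=F_1-F_2$. Given any sufficiently small $r>0$---in particular $r<r_0$ and $r$ within the range allowed by that lemma---apply this with $\rho=r$ and put $z:=\oz+w$, $\lm:=\olm+u$; then $\|(z,\lm)-(\oz,\olm)\|=\|(w,u)\|\le r$, the displayed identity gives $(z,\lm)\in\gph\partial g$, and the transfer claim yields $\Kgzv=T_K(w)\cap[u]^\perp=F_1-F_2$. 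I expect the transfer claim to be the only real obstacle: one must verify carefully that the critical cone of $g$ at a perturbed point is exactly the critical cone of the reduced indicator $\delta_{\Kg}$ at the shifted point; the remaining steps are direct appeals to Theorem~\ref{thm:reduction} and \cite[Lemma~4H.2]{DoR14} together with a routine coordination of radii.
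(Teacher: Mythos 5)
Your proposal is correct, but it takes a genuinely different route from the paper. You reduce everything to the polyhedral cone $\Kg$: after Theorem~\ref{thm:reduction}, your ``transfer claim'' is sound --- near $(z,\lm)$ the two graphs coincide, the normal cone of a convex set is determined by its local germ, and $N_{N_{\Kg}(w)}(u)=T_{\Kg}(w)\cap[u]^\perp$ --- so $\Kgzv$ is identified with the critical cone of the set $\Kg$ at the shifted point $(z-\oz,\lm-\olm)\in\gph N_{\Kg}$, and both directions of the statement are then read off from \cite[Lemma~4H.2]{DoR14} applied at the base point $(0,0)$, whose critical cone is $\Kg$ itself (for the converse you can also note that positive homogeneity of $\gph N_{\Kg}$ lets you rescale any witness into an arbitrarily small ball). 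The paper instead argues from scratch: it first derives the explicit formula $\Kgzv=\Kg\cap[\lm-\olm]^\perp+[z-\oz]$ in \eqref{co6}, using the inclusion $N_{\partial g(z)}(\lm)\subset N_{\partial g(z)}(\olm)$, the conjugate $g^*$ and the polarity relation of Corollary~\ref{polarity}, and then builds the faces concretely, taking $F_1=\Kg\cap[\lm-\olm]^\perp$ and $F_2$ the face of $F_1$ with $z-\oz\in\ri F_2$; the converse is obtained by choosing $u\in\Kg^*$ with $F_1=\Kg\cap[u]^\perp$, $w\in\ri F_2$, and invoking the reduction lemma again. Your argument buys brevity and is in the same spirit as the paper's treatment of the limiting normal cone in Theorem~\ref{limnc}(b) (reduce to the cone case, cite the known polyhedral-set result), but it outsources the combinatorial face analysis to \cite[Lemma~4H.2]{DoR14}, including its converse half; the paper's longer, self-contained proof additionally yields the identity \eqref{co6} and the explicit identification of $F_1$ and $F_2$, which is more information than the bare statement requires.
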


\begin{proof} 
Let $r>0$ be such that   \eqref{reduction} holds. Choosing a smaller radius $r$ if necessary, we can assume 
without loss of generality that the inclusions $\sub g(z)\subset \sub g(\oz)$ and $N_{\partial g(\oz)}(\lm)\subset N_{\partial g(\oz)}(\olm)$ hold for any $(z, \lm) \in \big(\gph \partial g\big)\cap \B_r(\oz, \olm)$, which can be guaranteed by    $g$ being a polyhedral function. 
We first show that for any  $(z, \lm) \in \big(\gph \partial g\big)\cap \B_r(\oz, \olm)$, we have 
\begin{equation}\label{co6}
\Kgzv = \Kg\cap [\lm-\olm]^\perp +[z-\oz ].
\end{equation}
To this end, pick $(z, \lm) \in \big(\gph \partial g\big)\cap \B_r(\oz, \olm)$ and observe that the inclusion 
\begin{equation}\label{in1}
N_{\partial g(z)}(\lm) \subset N_{\partial g(z)}(\olm)
\end{equation}
holds. To justify it, it is not hard to see $\sub g(z)=\sub g(\oz)\cap D$ with $D:=\{v\in \R^m|\, \la v,z-\oz\ra=g(z)-g(\oz)\}$. 
It follows from $\lm\in \sub g(z)\subset D$ and $\lm-\olm\in N_\Kg(z-\oz)$ that $\olm \in D$. This, together with the definition of $D$, leads us to $N_D(\lm)=N_D(\olm)=[z-\oz]$.
Since both $\sub g(\oz)$ and $D$ are 
polyhedral, we conclude from the intersection rule for normal cones, holding without any constraint qualification for polyhedral convex sets, that 
\begin{eqnarray*}
N_{\partial g(z)}(\lm)&=& N_{\sub g(\oz)\cap D}(\lm)= N_{\partial g(\oz)}(\lm)+N_D(\lm)\\
&\subset& N_{\partial g(\oz)}(\olm) +N_D(\olm) =N_{\sub g(\oz)\cap D}(\olm)=N_{\partial g(z)}(\olm),
\end{eqnarray*}
proving our claimed inclusion.  Since $g^*$ is a polyhedral function as well,  using Lemma~\ref{lem:indset} for $g^*$ at $(\olm,\oz)\in \gph \sub g^*$ and shrinking $r$, if necessary, ensure that $\oz\in \sub g^*(\lm)$.
Moreover, it results from \eqref{in1} that $N_{\partial g^*(\lm)}(z) \subset N_{\partial g^*(\lm)}(\oz)$, which  in combination with \eqref{cricone1}  
leads us to 
\begin{equation*}\label{co5}
K_{g^*}(\lm, z)=N_{\partial g^*(\lm)}(z) = N_{\partial g^*(\lm)}(\oz)\cap [z-\oz]^\perp =K_{g^*}(\lm, \oz)\cap [z-\oz]^\perp.
\end{equation*}
Using this and  \eqref{cricone2} brings us to 
\begin{equation}\label{co8}
\Kgzv =K_{g^*}(\lm, z)^*= \big(K_{g^*}(\lm, \oz)\cap [z-\oz]^\perp\big)^*=K_g(\oz, \lm) +[z-\oz ].
\end{equation}
Similarly, by  \eqref{cricone1}   and the definition of the normal cone to the polyhedral convex set $\sub g(\oz)$, we obtain 
\begin{equation*}\label{co7}
K_g(\oz, \lm) = N_{\partial g(\oz)}(\lm) =N_{\partial g(\oz)}(\olm)\cap [\lm-\olm]^\perp=  \Kg\cap [\lm-\olm]^\perp,
\end{equation*}
which, together with \eqref{co8}, proves \eqref{co6}.    

After these preparations, we are in a position to justify the claimed descriptions of critical cones of $g$. Pick $(z, \lm) \in \big(\gph \partial g\big)\cap \B_r(\oz, \olm)$ and
set $F_1:= \Kg\cap [\lm-\olm]^\perp$, which is clearly a face of $\Kg$. Because $\lm-\olm\in N_\Kg(z-\oz)$, resulting from \eqref{reduction}, we conclude that $z-\oz\in F_1$. 
Since the relative interiors of  nonempty  faces of $F_1$ form a partition of this set (cf. \cite[Theorem~18.2]{Roc70}), we find a face of $F_1$, denoted by $F_2$, that $z-\oz\in \ri F_2$.
This tells us that  $F_2\subset F_1$. Moreover,   $F_2$ is a  face  of $\Kg$ as well.
By \eqref{co6}, the inclusion $\Kgzv \subset F_1 - F_2$ clearly holds. To get the opposite inclusion, pick $x_i\in F_i$ for $i=1,2$. It follows from $z-\oz\in \ri F_2$ and \cite[Theorem~6.4]{Roc70} that there is $t>0$ such that $(1+t)(z-\oz)-tx_2\in F_2\subset F_1$,
which yields 
\begin{equation}\label{in2}
t(x_1-x_2)= tx_1+(1+t)(z-\oz)-tx_2- (1+t)(z-\oz)\in F_1+[z-\oz],
\end{equation}
which confirms via \eqref{co6} that  the inclusion $F_1-F_2\subset \Kgzv$ holds. This shows that  $ \Kgzv=F_1-F_2$.

Assume now that $F_1$ and $F_2$ are faces of $\Kg$ with $F_2\subset F_1$. 
Pick $r>0$ from Theorem \ref{thm:reduction} and choose $u\in \Kg^*$ with $\|u\|<r/2$ such that $F_1 = \Kg\cap [u]^\perp$. Since $u\in \Kg^*=N_\Kg(0)$, it follows from Theorem \ref{thm:reduction} that $\olm+u \in \partial g(\oz)$.
Pick now $w\in \ri F_2$ with $\|w\|<r/2$ and  observe from $w\in F_1$ that $(w, u) \in \gph N_\Kg$.  By Theorem \ref{thm:reduction}, we arrive at $(\oz+w, \olm+u) \in \gph \partial g$. Since $w\in \ri F_2$, 
as in \eqref{in2}, we can show that $F_1-F_2=F_1 +[w]$.  Employing this and  \eqref{co6} leads us to  
\begin{equation*}
F_1 - F_2 = F_1 +[w] =  \Kg\cap [u]^\perp +[w] = K_g(\oz+w, \olm+u),
\end{equation*}
which completes the proof.
\end{proof}

\begin{Corollary}\label{incri}
For a polyhedral function $g:\R^m\to \oR$  and $(\oz, \olm)\in \gph \partial g$, the following properties hold.
\begin{itemize}[noitemsep,topsep=2pt]
\item [ \rm {(a)}] There exists $r>0$ such that for all $(z, \lm) \in \big(\gph\partial g\big)\cap \B_r(\oz, \olm)$ the following inclusions hold:
\begin{subequations}\label{cri2}
\begin{align}
&\Kgzv \subset \Kg -\Kg,\label{cri2:a}\\
&\Kg \cap -\Kg \subset \Kgzv.\label{cri2:b}
\end{align}
\end{subequations}
\item [ \rm {(b)}] If,  in addition,     $\olm \in \ri\partial g (\oz)$, then there exists $r>0$ such that for all $(z, \lm)\in \big(\gph \partial  g\big)\cap\B_r(\oz, \olm)$, we have $\lm\in \ri \sub g(z)$ and 
\begin{equation}\label{cri1}
\Kgzv = \Kg.
\end{equation}
\end{itemize}
\end{Corollary}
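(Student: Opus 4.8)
The plan is to deduce both parts from the structural description of nearby critical cones provided by Proposition~\ref{lem:faces}, together with the characterization of the relative-interior condition in Proposition~\ref{nonde1} and the polarity relation \eqref{cricone2}.

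For part (a), I would take $r>0$ small enough that Proposition~\ref{lem:faces} applies, so that for every $(z,\lm)\in(\gph\partial g)\cap\B_r(\oz,\olm)$ we may write $\Kgzv = F_1 - F_2$ for faces $F_1,F_2$ of $\Kg$ with $F_2\subset F_1$. Then \eqref{cri2:a} is immediate: since $F_1\subset\Kg$ and $F_2\subset\Kg$, we get $\Kgzv = F_1-F_2\subset\Kg-\Kg$. For \eqref{cri2:b}, I would argue that the \emph{lineality space} $\Kg\cap-\Kg$ of $\Kg$ is contained in \emph{every} nonempty face of $\Kg$; indeed, if $L=\Kg\cap-\Kg$ and $F=\Kg\cap[v]^\perp$ for some $v\in\Kg^*$, then for $x\in L$ both $x,-x\in\Kg$ force $\la v,x\ra\le 0$ and $\la v,-x\ra\le 0$, hence $\la v,x\ra=0$, so $x\in F$. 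Applying this with $F=F_2$ gives $L\subset F_2\subset F_1$, and since $L$ is a subspace, $L = L - L\subset F_1 - F_2 = \Kgzv$, which is \eqref{cri2:b}. (Alternatively, one can invoke the converse part of Proposition~\ref{lem:faces} with the trivial choice $F_1=F_2=L$, noting that $L$ is itself a face of $\Kg$, to see $L$ arises as a nearby critical cone and then combine with \eqref{cri2:a}; but the direct face argument is cleaner.)

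For part (b), the relative-interior hypothesis $\olm\in\ri\partial g(\oz)$ is, by Proposition~\ref{nonde1}, equivalent to $\Kg$ being a linear subspace. When $\Kg$ is a subspace its only face is $\Kg$ itself, so in the representation $\Kgzv = F_1 - F_2$ from Proposition~\ref{lem:faces} we are forced to have $F_1 = F_2 = \Kg$, whence $\Kgzv = \Kg - \Kg = \Kg$, giving \eqref{cri1}. To see that this persists in a neighborhood and to get the nondegeneracy $\lm\in\ri\partial g(z)$, I would shrink $r$ so that Proposition~\ref{lem:faces} holds for $(z,\lm)$; once $\Kgzv = \Kg$ and $\Kg$ is a subspace, $\Kgzv$ is a subspace, and applying the equivalence in Proposition~\ref{nonde1} at $(z,\lm)$ yields $\lm\in\ri\partial g(z)$. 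One should double-check the neighborhood bookkeeping: Proposition~\ref{lem:faces} already furnishes a single $r$ that works uniformly for all nearby $(z,\lm)$, so no further intersection of neighborhoods is needed beyond what that proposition supplies.

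The only delicate point is the lineality-space inclusion used for \eqref{cri2:b}: one must be careful that the face $F_2$ appearing in Proposition~\ref{lem:faces} is \emph{nonempty} (it contains $z-\oz$ in its relative interior, hence is nonempty), so that the elementary fact ``the lineality space is contained in every nonempty face of a polyhedral cone'' legitimately applies. Everything else is a direct application of results already established, so I expect no serious obstacle; the main task is simply to assemble Propositions~\ref{nonde1} and~\ref{lem:faces} in the right order and keep track of which $r$ is being used.
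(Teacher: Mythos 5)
Your proposal is correct and follows essentially the same route as the paper: both parts are read off from the face description of nearby critical cones in Proposition~\ref{lem:faces}, with \eqref{cri2:b} coming from the fact that the lineality space $\Kg\cap-\Kg$ (the smallest face) lies in every face, and part (b) from Proposition~\ref{nonde1}. The only cosmetic difference is that in (b) the paper sandwiches $\Kgzv$ between $\Kg-\Kg$ and $\Kg\cap-\Kg$ using part (a), whereas you note that a subspace has no proper faces; the two arguments are interchangeable.
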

\begin{proof} The inclusion in 
 \eqref{cri2:a} for any $(z, \lm) \in \gph \partial g$ sufficiently close to $(\oz, \olm)$ falls directly from Proposition~\ref{lem:faces}. 
Also, according to Proposition~\ref{lem:faces}, for any such $(z, \lm) \in \gph \partial g$, one can find  some faces $F_1, F_2$ of $\Kg$ with $F_2\subset F_1$
such that $K_g(z,\lm)=F_1-F_2$. Since $\Kg \cap -\Kg$ is the smallest face of $\Kg$, we get $\Kg \cap -\Kg\subset F_i$ for $i=1,2$, which in turn justify 
the inclusion in  \eqref{cri2:b} for all $(z, \lm) \in \gph \partial g$ sufficiently close to $(\oz, \olm)$. 

To justify (b), it follows from $\olm\in \ri\partial g(\oz)$ and  Proposition~\ref{nonde1}  that $\Kg$ is a linear subspace.  This and the inclusions  \eqref{cri2:a} and \eqref{cri2:b}   tell us  that
\begin{equation*}
\Kg=\Kg-\Kg\supset \Kgzv \supset \Kg\cap-\Kg=\Kg
\end{equation*}
for all $(z, \lm)\in \gph \partial  g$ sufficiently close to $(\oz,\olm)$, which in turn confirms that  $\Kgzv$ is a linear subspace. Using Proposition~\ref{nonde1} again, we have $\lm\in \ri \sub g(z)$ and hence complete the proof of (b).
\end{proof}

 Both inclusions in Corollary~\ref{incri}(a) were perviously established in \cite[Proposition~2E.10]{DoR14} for polyhedral convex sets without appealing to the 
reduction lemma,   an important tool used in our proof. Note that while the observation in Corollary~\ref{incri}(b) is a simple and direct consequence of the inclusions  \eqref{cri2:a}-\eqref{cri2:b},
it plays an indispensable role in the next two sections in which we are going to study strict twice epi-differentiability of polyhedral functions. In fact, this result reveals that when we 
are converging in the graph of subgradient mappings of polyhedral functions to a given point therein under the extra  relative interior condition,  all those points, used in this convergence, enjoy this relative interior condition. 
This has a major implication for   second-order variational constructions such as limiting coderivatives and strict subgradient graphical derivatives; see Theorem~\ref{lem:proto1}(c) and Theorem~\ref{limnc}(c).



We proceed with two other important applications of the established  reduction lemma for polyhedral functions in calculating the  proto-derivative and coderivative of subgradient mappings of this class of functions.
To this end, consider a set-valued mapping $F:\R^n\tto \R^m$. According to \cite[Definition~8.33]{rw}, the {\em graphical derivative} of $F$ at $\ox$ for $\oy$ with $(\ox,\oy) \in \gph F$ is the set-valued mapping $DF(\ox, \oy): \R^n\tto \R^m$ defined via the tangent cone to $\gph F$ at $(\ox, \oy)$ by
\begin{equation*}
u \in DF(\ox, \oy)(w)\; \Longleftrightarrow\; (w,u)\in T_{\gph F}(\ox, \oy),
\end{equation*}
or, equivalently, $\gph DF(\ox, \oy) = T_{\gph F}(\ox, \oy)$.  Using the definition of the tangent cone, we can present an alternative definition of $DF(\ox, \oy)$ in terms of graphical limits as
\begin{equation}\label{proto}
\gph DF(\ox, \oy) = \limsup_{t\searrow 0}\frac{\gph F-(\ox,\oy)}{t}.
\end{equation}
The set-valued mapping $F$ is said to be {\em proto-differentiable} at $\ox$ for $\oy$ if the outer graphical limit in \eqref{proto} is actually a full limit. If $F$ is proto-differentiable at $\ox$ for $\oy$,   its graphical derivative $DF(\ox, \oy)$ is   called the 
{\em proto-derivative} of $F$ at $\ox$ for $\oy$. When $F(\ox)$ is a singleton consisting of $\oy$ only, the notation $DF(\ox, \oy)$ is simplified to $DF(\ox)$. It is easy to see that 
for a single-valued function $F$, which is differentiable at $\ox$,  the graphical derivative  $DF(\ox)$ boils down to the Jacobian matrix of $F$ at $\ox$, denoted by $\nabla F(\ox)$. 
Recall from \cite[Definition~9.53]{rw} that the strict graphical derivative 
of  a set-valued mapping $F$ at $\ox$ for $\oy$ with $(\ox,\oy)\in \gph F$, is  the set-valued mapping $D_*F(\ox,\oy):\R^n\tto \R^m$, defined  by 
\begin{equation}\label{sproto}
\gph D_*F(\ox,\oy)=\limsup_{
  \substack{
   t\searrow 0 \\
  (x,y)\xrightarrow{ \gph F}(\ox,\oy)
  }} \frac{\gph F-(x,y)}{t}.
\end{equation}
The set-valued mapping $F$ is said to be {\em strictly} proto-differentiable at $\ox$ for $\oy$ if the outer graphical limit in \eqref{sproto} is   a full limit.
 When $F$ is strictly proto-differentiable at $\ox$ for $\oy$, its strict graphical derivative $D_*F(\ox, \oy)$ is   called the {\em strict} proto-derivative of $F$ at $\ox$ for $\oy$. 
 To elaborate  more,  take a set  $\Omega\subset \R^d$ and  $\ox\in \Omega$. The regular (Clarke) tangent cone and the paratingent cone to   $\Omega$ at $\ox$ are defined, respectively,   by
\begin{equation}\label{tancc}
 \widehat T_{\Omega}(\ox) =\liminf_{x \xrightarrow{ \Omega}\ox,t\searrow 0} \frac{\Omega-x}{t}\quad \mbox{and}\quad  \widetilde {T}_{\Omega}(\ox) =\limsup_{x \xrightarrow{ \Omega}\ox,t\searrow 0} \frac{\Omega-x}{t}.
\end{equation}
The strict proto-differentiability of $F$ at $\ox$ for $\oy$ amounts to saying that $ \widehat T_{\gph F}(\ox,\oy)= \widetilde {T}_{\gph F}(\ox,\oy)$.
Observe also that since the inclusions $ \widehat T_{\gph F}(\ox,\oy)\subset  T_{\gph F}(\ox,\oy)\subset  \widetilde {T}_{\gph F}(\ox,\oy)$ always hold, 
strict proto-differentiability of $F$ at $\ox$ for $\oy$ implies that  its proto-derivative and strict proto-derivative coincide, namely we have 
\begin{equation}\label{eqspd}
D_*F(\ox,\oy)=DF(\ox,\oy).
\end{equation}
 
Strict proto-differentiability of a set-valued mapping has  far-reaching consequences. In fact, it allows us to evaluate the strict proto-derivative 
of a set-valued mapping, which can be utilized to characterize its strong metric regularity; see \cite[Theorem~4D.1]{DoR14} and Section~\ref{sec05} for more details on this 
application. The intriguing question is whether  strict proto-differentiability holds for any class of set-valued mappings,  important for constrained and composite optimization problems.  In the next theorem, we are going to show that subgradient mappings of polyhedral functions enjoy this property when a relative interior condition is satisfied for   subgradients under consideration. Furthermore, we will 
prove that such a relative interior condition, indeed, characterizes strict proto-differentiability of these set-valued mappings. While it is not easy to study strict proto-differentiability 
of set-valued mappings, its weaker version, namely proto-differentiability, has been well understood for many important classes of functions that are  important for various applications; 
see \cite{r89,mms, ms20, rw} for more details and examples. In particular, it is well-known (cf. \cite[Corollary~13.41]{rw} ) that subgradient mappings of polyhedral functions are always 
proto-differentiable.

\begin{Theorem}\label{lem:proto1}
For a polyhedral function $g:\R^m\to \oR$  and $(\oz, \olm)\in \gph \partial g$, the following properties hold.
\begin{itemize}[noitemsep,topsep=2pt]
\item [ \rm {(a)}]  The graphical derivative of  $  \sub g$  at $\oz$ for $\olm$ can be obtained by  
\begin{equation}\label{proto4}
\gph D(\sub  g)(\oz,\olm) = T_{\gph\partial g}(\oz, \olm)= \gph N_\Kg.
\end{equation}

\item [ \rm {(b)}]  The strict graphical derivative of  $  \sub g$  at $\oz$ for $\olm$ can be obtained by{\footnote{This formula was brought to our attention by a reviewer.}}  
\begin{equation}\label{proto4n}
\gph D_*(\sub  g)(\oz,\olm) = \gph N_{\Kg}-\gph N_{\Kg}.
\end{equation}

\item [ \rm {(c)}]   The condition   $\olm \in \ri\partial g (\oz)$ amounts to the existence of  a neighborhood $O$ of $(\oz,\olm)$ such that for any $(z,\lm)\in O\cap \gph \sub g$, $\sub g$ is strictly proto-differentiable at $z$ for $\lm$
and its proto-derivative and strict proto-derivative  coincide, namely 
\begin{equation}\label{gdpd}
D_*(\sub  g)(z,\lm)=D(\sub  g)(z,\lm).
\end{equation}
\end{itemize}
\end{Theorem}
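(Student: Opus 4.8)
The plan is to establish the three parts in order, leveraging the reduction lemma (Theorem~\ref{thm:reduction}) as the central tool. For part~(a), the first equality $\gph D(\sub g)(\oz,\olm) = T_{\gph \sub g}(\oz,\olm)$ is immediate from the definition of the graphical derivative. For the second equality, I would invoke Theorem~\ref{thm:reduction}: since $(\gph\sub g - (\oz,\olm))\cap\B_r(0,0) = (\gph N_\Kg)\cap\B_r(0,0)$ for some $r>0$, and since taking tangent cones only depends on the local structure near the origin, we get $T_{\gph\sub g}(\oz,\olm) = T_{\gph N_\Kg}(0,0)$. Because $\gph N_\Kg$ is a cone (indeed a finite union of polyhedral convex cones, as $\Kg$ is polyhedral), its tangent cone at the origin is itself, so $T_{\gph N_\Kg}(0,0) = \gph N_\Kg$. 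This yields \eqref{proto4}.

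For part~(b), the strict graphical derivative is by definition the paratingent cone $\wtilde T_{\gph\sub g}(\oz,\olm)$ from \eqref{tancc}. Again by Theorem~\ref{thm:reduction}, the set $\gph\sub g$ coincides locally near $(\oz,\olm)$ with the shifted cone $(\oz,\olm)+\gph N_\Kg$, so $\wtilde T_{\gph\sub g}(\oz,\olm) = \wtilde T_{\gph N_\Kg}(0,0)$. The task then reduces to computing the paratingent cone at the origin of the cone $S:=\gph N_\Kg$. For any cone $S$ that is a finite union of polyhedral convex cones, the paratingent cone at $0$ equals $S - S$: the inclusion $S-S \subset \wtilde T_S(0)$ follows because for $(w,u),(w',u')\in S$ and $t\searrow 0$, the points $(w,u)$ (fixed, along $S$) give difference quotients $((w,u)-t(w',u'))/t \to$ arbitrary combinations, and more carefully one uses that each polyhedral piece is locally conic; the reverse inclusion $\wtilde T_S(0)\subset S-S$ uses that $S-S$ is closed and that any paratingent direction is a limit of difference quotients of points in finitely many polyhedral cones. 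I expect this computation of $\wtilde T_{\gph N_\Kg}(0,0) = \gph N_\Kg - \gph N_\Kg$ to be the main obstacle, since it requires care with the union structure; alternatively one can cite the known formula for the strict graphical derivative of the normal-cone mapping of a polyhedral set applied to $\Kg$, combined with the reduction identity.

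For part~(c), the equivalence splits into two implications. Assume first $\olm\in\ri\sub g(\oz)$. By Corollary~\ref{incri}(b) there is a neighborhood $O$ of $(\oz,\olm)$ such that every $(z,\lm)\in O\cap\gph\sub g$ satisfies $\lm\in\ri\sub g(z)$ and $\Kgzv = \Kg$. Fix such $(z,\lm)$. By parts~(a) and~(b) applied at $(z,\lm)$, we have $\gph D(\sub g)(z,\lm) = \gph N_{\Kgzv}$ and $\gph D_*(\sub g)(z,\lm) = \gph N_{\Kgzv} - \gph N_{\Kgzv}$. Since $\Kgzv = \Kg$ is a linear subspace (Proposition~\ref{nonde1}), the normal cone $N_{\Kg}$ is itself the orthogonal complement $\Kg^\perp$, which is a subspace, so $\gph N_{\Kg}$ is a subspace and $\gph N_{\Kg} - \gph N_{\Kg} = \gph N_{\Kg}$. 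Hence $D_*(\sub g)(z,\lm) = D(\sub g)(z,\lm)$, and equality of the difference-quotient outer limit with this common value shows the limsup in \eqref{sproto} is a full limit, i.e.\ $\sub g$ is strictly proto-differentiable at $z$ for $\lm$; this gives \eqref{gdpd}. Conversely, if such a neighborhood $O$ exists, then in particular $\sub g$ is strictly proto-differentiable at $\oz$ for $\olm$, so by \eqref{eqspd} and parts~(a)--(b), $\gph N_\Kg = \gph N_\Kg - \gph N_\Kg$, forcing $\gph N_\Kg$ to be a linear subspace; projecting onto the second factor (the image of $N_\Kg$) and onto the first (the cone $\Kg$ itself, obtained as the set of $w$ with $(w,0)\in\gph N_\Kg$ — here using that $\gph N_\Kg$ being a subspace makes $\Kg$ a subspace), we conclude $\Kg$ is a linear subspace, and Proposition~\ref{nonde1} yields $\olm\in\ri\sub g(\oz)$.
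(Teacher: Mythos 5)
Your parts (a) and (b) are correct and essentially the paper's own route: translate by $(\oz,\olm)$, use Theorem~\ref{thm:reduction} to replace $\gph\sub g$ locally by the cone $\gph N_\Kg$, and compute the tangent, respectively paratingent, cone of that cone at the origin; for (b) the paper simply cites \cite[Proposition~4H.10]{DoR14}, which is the alternative you yourself mention. Your sketch of the inclusion $\gph N_\Kg-\gph N_\Kg\subset\wtilde T_{\gph N_\Kg}(0,0)$ is muddled as written (the quotient $((w,u)-t(w',u'))/t$ blows up); the clean argument is to take base points $t_k(w',u')$ and points $t_k(w,u)$, both in the cone, whose difference quotients are constantly $(w,u)-(w',u')$, while the reverse inclusion follows by passing to a subsequence so that all base points lie in one convex piece $C_i$ and all target points in one piece $C_j$, so the quotients stay in the closed convex cone $C_j-C_i$. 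These are repairable details, not gaps.

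The genuine gap is in the forward direction of (c). From $\Kgzv=\Kg$ being a linear subspace you correctly conclude $D_*(\sub g)(z,\lm)=D(\sub g)(z,\lm)$, i.e. $\wtilde T_{\gph\sub g}(z,\lm)=T_{\gph\sub g}(z,\lm)$. But strict proto-differentiability at $z$ for $\lm$ means $\what T_{\gph\sub g}(z,\lm)=\wtilde T_{\gph\sub g}(z,\lm)$: the outer limit in \eqref{sproto} must coincide with the corresponding \emph{inner} limit. Knowing that the limsup with moving base points equals the limsup with a fixed base point says nothing about the liminf, so the sentence ``equality of the difference-quotient outer limit with this common value shows the limsup in \eqref{sproto} is a full limit'' is a non sequitur; in general $\what T\subset T=\wtilde T$ can be strict. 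What is missing is a computation of the regular tangent cone, and this is exactly how the paper closes the argument: by \cite[Theorem~6.26]{rw}, $\what T_{\gph\partial g}(\oz,\olm)$ is the liminf of $T_{\gph\partial g}(z,\lm)$ over $(z,\lm)\to(\oz,\olm)$ in $\gph\sub g$, and since by part (a) together with Corollary~\ref{incri}(b) these tangent cones are constantly equal to $\gph N_\Kg$ near $(\oz,\olm)$, the liminf equals $\gph N_\Kg$, which coincides with $\wtilde T_{\gph\partial g}(\oz,\olm)=\gph N_\Kg-\gph N_\Kg$ because $\gph N_\Kg=\Kg\times\Kg^\perp$ is a subspace. (Alternatively, argue directly: for $(z_k,\lm_k)\to(\oz,\olm)$ in $\gph\sub g$, $t_k\searrow0$ and $v\in\gph N_\Kg$, the reduction lemma gives $(z_k,\lm_k)-(\oz,\olm)\in\gph N_\Kg$, and since this set is a subspace, $(z_k,\lm_k)+t_kv$ lies in $(\oz,\olm)+\gph N_\Kg$, hence in $\gph\sub g$ for large $k$.) You already have the needed ingredient—the local constancy of the critical cones—but the passage from $T=\wtilde T$ to strict proto-differentiability must go through $\what T$; your converse direction in (c) is fine and matches the paper.
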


\begin{proof} Observe that the first equality in \eqref{proto4} is the definition of graphical derivative. Using the equality \eqref{reduction} and the definition of the tangent cone to $\gph \sub g$ at $(\oz, \olm)$, we immediately  arrive at the second equality in \eqref{proto4}.

To justify (b),  we conclude from Theorem~\ref{thm:reduction} and \cite[Proposition~4H.10]{DoR14}, respectively, that 
\begin{equation}\label{para}
 \widetilde T_{\gph \partial g}(\oz,\olm)=\gph D_*(\sub  g)(\oz,\olm)= \gph D_*N_{\Kg}(0,0)=\gph N_{\Kg} - \gph N_{\Kg}.
\end{equation}

Finally, we proceed with the proof of (c). Suppose that  $\olm\in \ri \sub g(\oz)$.  By Corollary~\ref{incri}(b), we find $r>0$ such that for any $(z,\lm)\in \B_r(\oz,\olm)\cap \gph \sub g$, the property 
 $K_g(z,\lm)=\Kg$ holds. Take any such a pair $(z,\lm) $ and conclude from part (a) that  $$T_{\gph \partial g}(z,\lm)=  \gph N_{K_g(z,\lm)}=\gph N_\Kg.$$
 It follows then from \cite[Theorem~6.26]{rw} that 
 $$
\widehat T_{\gph \partial g}(\oz,\olm)=\liminf_{
  (z,\lm)\xrightarrow{   \gph \sub g}(\oz,\olm)}
  T_{\gph \partial g}(z,\lm)= \gph N_\Kg.
 $$
Since $\olm\in \ri \sub g(\oz)$, we conclude that $\gph N_{\Kg}=\Kg\times\Kg^\perp$ is a linear subspace. These, coupled with \eqref{para}, lead us to 
 $\widehat T_{\gph \partial g}(\oz,\olm)= \widetilde T_{\gph \partial g}(\oz,\olm)$, meaning that $\sub g$ is strictly proto-differentiable at $\oz$ for $\olm$.
 To achieve a similar conclusion for any pair $(z,\lm)\in \gph \sub g$ sufficiently close to $(\oz,\olm)$, observe from Corollary~\ref{incri}(b)
that for any such a pair, we have $\lm\in \ri \sub g(z)$. A similar argument as the one presented above for $(\oz,\olm)$, shows that $\sub g$ is strictly   proto-differentiable at $z$ for $\lm$
whenever $(z,\lm)\in \gph \sub g$  is sufficiently close to $(\oz,\olm)$. Finally,  \eqref{gdpd} results from \eqref{eqspd}. Conversely, if $\sub g$ is strictly proto-differentiable at $\oz$ for $\olm$, 
our discussion prior to \eqref{eqspd} implies that
$$
\gph N_{\Kg}=T_{\gph \partial g}(\oz,\olm)= \widetilde T_{\gph \partial g}(\oz,\olm)=\gph N_{\Kg}-\gph N_{\Kg},
$$
where the last equality results from \eqref{para}. This clearly tells us that  $\Kg$ is a linear subspace and hence it follows from Proposition~\ref{nonde1} that  $\olm\in \ri \sub g(\oz)$, which completes the proof.
\end{proof}

We continue with one more direct application of the reduction lemma for polyhedral functions in finding the regular and limiting normal cones to   $\gph \sub g$.

\begin{Theorem}[regularity of subgradient mappings of polyhedral functions] \label{limnc}
For a polyhedral function $g:\R^m\to \oR$  and $(\oz, \olm)\in \gph \partial g$, the following properties hold.
\begin{itemize}[noitemsep,topsep=2pt]
\item [ \rm {(a)}]   The regular   normal cone to  $\gph \sub g$ at $(\oz,\olm)$ can be calculated by   
\begin{equation}\label{co1}
\rN_{\gph \partial g}(\oz, \olm) = \Kg^*\times \Kg.
\end{equation}
\item [ \rm {(b)}] The {\rm(}limiting{\rm)} normal cone to   $\gph \sub g$ at $(\oz,\olm)$ can be calculated by 
\begin{equation}\label{co4}
N_{\gph \partial g}(\oz, \olm) = \bigcup_{\substack{
   F_1, F_2\subset  \mathcal{F}_\Kg\\
  F_2\subset F_1
  }} (F_1-F_2)^*\times (F_1-F_2),
\end{equation}
where $\mathcal{F}_\Kg$ is the collection of all faces of $\Kg$.
\item [ \rm {(c)}]  The condition   $\olm\in \ri \sub g(\oz)$  is equivalent to 
 \begin{equation}\label{co2}
N_{\gph \partial g}(\oz, \olm)=\rN_{\gph \partial g}(\oz, \olm).
\end{equation}
\end{itemize}
\end{Theorem}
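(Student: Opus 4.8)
The plan is to read off all three parts from the reduction lemma (Theorem~\ref{thm:reduction}), the enumeration of nearby critical cones in Proposition~\ref{lem:faces}, and the subspace criterion in Proposition~\ref{nonde1}. For part~(a), I would use that by definition $\rN_{\gph\partial g}(\oz,\olm)=T_{\gph\partial g}(\oz,\olm)^*$, while Theorem~\ref{lem:proto1}(a) (equivalently, the reduction lemma) already gives $T_{\gph\partial g}(\oz,\olm)=\gph N_\Kg$. Thus \eqref{co1} reduces to the purely conic identity $(\gph N_K)^*=K^*\times K$, valid for every polyhedral cone $K\subset\R^m$. To prove it, write $\gph N_K=\{(w,u)\mid w\in K,\ u\in K^*,\ \la w,u\ra=0\}$: testing a candidate polar vector $(p,q)$ against $(w,0)\in\gph N_K$ for $w\in K$ forces $p\in K^*$, testing against $(0,u)\in\gph N_K$ for $u\in K^*$ forces $q\in K^{**}=K$ (bipolar theorem, $K$ being closed), and the reverse inclusion $K^*\times K\subset(\gph N_K)^*$ is immediate from the definition of the polar cone. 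Setting $K=\Kg$ yields \eqref{co1}.

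For part~(b), I would apply part~(a) not only at $(\oz,\olm)$ but at every $(z,\lm)\in\gph\partial g$, which gives $\rN_{\gph\partial g}(z,\lm)=\Kgzv^*\times\Kgzv$. By Proposition~\ref{lem:faces} there is $r>0$ such that for all $(z,\lm)\in(\gph\partial g)\cap\B_r(\oz,\olm)$ one has $\Kgzv=F_1-F_2$ for some faces $F_2\subset F_1$ of $\Kg$, and, conversely, every such difference $F_1-F_2$ equals $\Kgzv$ at points lying arbitrarily close to $(\oz,\olm)$. Since $\Kg$ has only finitely many faces, $\rN_{\gph\partial g}(\cdot,\cdot)$ takes only finitely many values, each a closed cone, on $(\gph\partial g)\cap\B_r(\oz,\olm)$; hence the outer limit $N_{\gph\partial g}(\oz,\olm)=\limsup_{(z,\lm)\to(\oz,\olm)}\rN_{\gph\partial g}(z,\lm)$ is contained in the finite union $\bigcup_{F_2\subset F_1}(F_1-F_2)^*\times(F_1-F_2)$ (pass to a subsequence so that the faces stabilize), while the converse part of Proposition~\ref{lem:faces} shows each set $(F_1-F_2)^*\times(F_1-F_2)$ is attained as $\rN_{\gph\partial g}(z^k,\lm^k)$ along some $(z^k,\lm^k)\to(\oz,\olm)$, hence lies in $N_{\gph\partial g}(\oz,\olm)$. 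This gives the two inclusions in \eqref{co4}.

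For part~(c), I would first note that $\rN_{\gph\partial g}(\oz,\olm)\subset N_{\gph\partial g}(\oz,\olm)$ always, and that by \eqref{co1} and \eqref{co4} the set $\rN_{\gph\partial g}(\oz,\olm)=\Kg^*\times\Kg$ is itself one of the terms of the union in \eqref{co4} (take $F_1=\Kg$ and $F_2$ the lineality space of $\Kg$, so that $F_1-F_2=\Kg$). Consequently \eqref{co2} holds if and only if every term $(F_1-F_2)^*\times(F_1-F_2)$ of \eqref{co4} is contained in $\Kg^*\times\Kg$. If $\Kg$ is not a linear subspace then $\spann\Kg\supsetneq\Kg$, so the term with $F_1=F_2=\Kg$, namely $(\spann\Kg)^*\times\spann\Kg$, is not contained in $\Kg^*\times\Kg$, and \eqref{co2} fails; if $\Kg$ is a linear subspace then it is its own unique face, the union in \eqref{co4} collapses to $\Kg^*\times\Kg$, and \eqref{co2} holds. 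By Proposition~\ref{nonde1}, $\Kg$ is a linear subspace exactly when $\olm\in\ri\partial g(\oz)$, which finishes the proof.

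As for difficulty, the substance is already packaged in Theorem~\ref{thm:reduction} and Proposition~\ref{lem:faces}, so no genuinely hard step is expected. The one spot deserving care is the double inclusion in part~(b): one must combine the two directions of Proposition~\ref{lem:faces} with the finiteness of the face lattice of $\Kg$ to identify the $\limsup$ of regular normal cones with the concrete finite union in \eqref{co4}, ensuring both that no extra normal vectors appear in the outer limit and that every displayed term genuinely occurs.
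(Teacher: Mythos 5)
Your proof is correct, and although it rests on the same core ingredients (the reduction lemma via Theorem~\ref{lem:proto1}(a) and Proposition~\ref{lem:faces}), it organizes parts (b) and (c) along a genuinely different route than the paper. In (a) the difference is cosmetic: the paper reduces $\rN_{\gph\partial g}(\oz,\olm)$ to $\rN_{\gph N_\Kg}(0,0)$ by Theorem~\ref{thm:reduction} and cites the known polyhedral formula, while you verify the polar identity $(\gph N_K)^*=K^*\times K$ directly; both are fine. In (b) the paper again transfers the whole computation to $\gph N_\Kg$ at the origin and then invokes the representation of $N_{\gph N_\Kg}(0,0)$ from the proof of \cite[Theorem~2]{dr96}, whereas you rederive \eqref{co4} internally: apply (a) at nearby graph points, use both directions of Proposition~\ref{lem:faces} to identify the nearby regular normal cones with the finitely many closed sets $(F_1-F_2)^*\times(F_1-F_2)$, and compute the outer limit by stabilizing the face pair along a subsequence (for the inclusion) and by taking constant sequences along the realizing points (for the reverse inclusion); this is exactly the delicate double inclusion you flagged, and it is handled correctly. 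In (c) the paper proves the forward direction through Corollary~\ref{incri}(b) (local constancy of the critical cone, hence of the regular normal cones) and the converse through \cite[Theorem~13.57]{rw}, while you read both directions off the formulas \eqref{co1} and \eqref{co4}: equality forces the term with $F_1=F_2=\Kg$, namely $(\Kg-\Kg)^\perp\times(\Kg-\Kg)$, into $\Kg^*\times\Kg$, so $\Kg$ must be a linear subspace, and conversely a subspace is its only face, collapsing the union; Proposition~\ref{nonde1} then converts the subspace property into $\olm\in\ri\sub g(\oz)$. The trade-off: your argument is more self-contained (no appeal to \cite{dr96} or to \cite[Theorem~13.57]{rw}) but leans on the finer, converse half of Proposition~\ref{lem:faces}; the paper's proof is shorter on the page because it outsources the cone computation to the literature and, in (c), gets the forward implication without knowing the full formula \eqref{co4}.
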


\begin{proof}
We begin with the proof of (a). It follows from Theorem~\ref{thm:reduction} that $\gph \sub g- (\oz,\olm)$ and $\gph N_\Kg$ coincide locally around $(0,0)$, which 
leads us via the definition of the regular normal cone to 
$$
\rN_{\gph \partial g}(\oz,\olm)= \rN_{\gph \partial g -(\oz,\olm)}(0,0)=\rN_{\gph N_{\Kg} }(0,0).
$$
Since $\Kg$ is a polyhedral convex set, it results from  \cite[page~264]{DoR14} that 
$$
\rN_{\gph N_{\Kg} }(0,0)= K_{\Kg}(0,0)^*\times K_{\Kg}(0,0)= \Kg^*\times \Kg.
$$
Combining these proves (a). 

To justify (b), observe that  $(u,w)\in N_{\gph \partial g}(\oz, \olm)$ if and only if there are  sequences $(z^k,\lm^k)\to (\oz,\olm)$ with $(z^k,\lm^k) \in \gph \sub g$ and 
$(u^k,w^k)\to (u,w)$ with $(u^k,w^k)\in \rN_{\gph \partial g}(z^k,\lm^k)$.
Using the definition of the regular normal cone and \eqref{reduction}, we conclude  for any $k$ sufficiently large that 
\begin{eqnarray*}
(u^k,w^k)\in \rN_{\gph \partial g}(z^k,\lm^k)  &\iff &(u^k,w^k)\in \rN_{\gph \partial g -(\oz,\olm)}(z^k-\oz,\lm^k-\olm) \\
&\iff &(u^k,w^k)\in \rN_{\gph N_{\Kg} }(z^k-\oz,\lm^k-\olm).
\end{eqnarray*}
By the definition of the limiting normal cone, we conclude from the last inclusion that $(u,w)\in N_{\gph N_{\Kg} }(0,0)$ and thus get  the inclusion $N_{\gph \partial g}(\oz, \olm)\subset N_{\gph N_{\Kg} }(0,0)$. A similar argument via \eqref{reduction} can be used 
to justify the opposite inclusion and obtain 
$$
N_{\gph \partial g}(\oz, \olm)= N_{\gph N_{\Kg} }(0,0).
$$
Recall that  $\Kg$ is a polyhedral convex cone. Employing the representation of the limiting normal cone to  the normal cone 
to the graph of a polyhedral convex set, obtained in the proof of \cite[Theorem~2]{dr96}, we arrive at the representation  on the right-hand side of \eqref{co4}  for $N_{\gph N_{\Kg} }(0,0)$, which 
completes the proof of (b). 

To prove (c), observe that it follows from $\olm\in \ri \sub g(\oz)$ and Corollary~\ref{incri}(b) that there exists a neighborhood $O$ of $(\oz,\olm)$ such that for any $(z,\lm)\in O\cap \gph \sub g$, we have 
$K_g(z,\lm)=\Kg$. This and   \eqref{co1} bring us to 
 \begin{equation}\label{co22}
\rN_{\gph \partial g}(z, \lm)=\rN_{\gph \partial g}(\oz, \olm)\quad \mbox{for all}\;\; (z,\lm)\in O\cap \gph \sub g, 
\end{equation}
which, together with  the definition of the limiting normal cone, justifies the   claimed equality in \eqref{co2}.  Conversely, suppose that 
\eqref{co2} holds. We are going to show that $\Kg$ is a linear subspace. To this end, it suffices to show that if $w\in \Kg$, then $-w\in \Kg$. Pick $w\in \Kg$ and conclude from \eqref{proto4} that 
there is $u\in \R^m$ such that $(w,u)\in T_{\gph \partial g}(\oz, \olm)$. Appealing now to \cite[Theorem~13.57]{rw} implies that $(u,-w)\in N_{\gph \partial g}(\oz, \olm)$. This, together with \eqref{co2} and \eqref{co1},
tells us that $-w\in \Kg$. Remember that $\Kg=N_{\sub g(\oz)}(\olm)$. Since $\Kg$ is a linear subspace, we arrive at  $\olm\in \ri \sub g(\oz)$, which completes the proof of (c).
\end{proof}
The description \eqref{co1} of the regular normal cone to $\gph \sub g$ in terms of the critical cone of $g$ was established in \cite[Theorem~4.3(i)]{MoS16} using a different approach.
Our current proof relies upon the reduction lemma, which  allows us to   simplify the proof of this result. We should mention that a similar result
was established for polyhedral convex sets using Robinson's  reduction lemma for polyhedral convex sets in the proof of \cite[Theorem~2]{dr96}.
A similar expression of the limiting normal cone to $\gph \sub g$ was achieved in \cite[Theorem~5.1]{MoS16} via a lengthy direct argument.
 Our proof, which  heavily uses Theorem~\ref{thm:reduction}, reduces the calculation to  the case of   a polyhedral convex cone   and then utilizes 
the available result for this setting. Thus, Theorem~\ref{limnc} can be considered as a generalization of   Dontchev and Rockafellar's result, obtained in \cite[Theorem~2]{dr96}, for polyhedral convex sets.
Note that part (c) of Theorem~\ref{limnc} offers a new piece of information about $\gph \sub g$, which has not been observed before to the best of our knowledge.
Indeed, it tells us that $\gph \sub g$ is {\em regular} at $(\oz,\olm)\in \gph \sub g$ in the sense of \cite[Definition~6.4]{rw} if and only if  the subgradient $\olm$ is taken from the relative interior of $\sub g(\oz)$.

Recall that for a polyhedral function $g$ with $(\oz,\olm)\in \gph \sub g$,  the coderivative mapping of   $\sub g$ at $\oz$ for $\olm$, denoted  $D^*(\sub g) (\oz,\olm )$,   is defined by 
$$
u\in D^*(\sub g) (\oz,\olm )(w)\iff (u,-w)\in N_{\gph \partial g}(\oz, \olm).
$$
It is well-known (cf. \cite[Theorem~13.57]{rw}) that the inclusion $D(\sub g) (\oz,\olm )(w)\subset D^*(\sub g) (\oz,\olm )(w)$, for any $w\in \R^m$, always holds for a polyhedral function $g$. Below, we show that 
this inclusion becomes equality provided that the subgradient $\olm$ is taken from the relative interior of $\sub g(\oz)$.
\begin{Corollary}\label{codgh} Assume that $g:\R^m\to \oR$ is a polyhedral function and  that   $(\oz, \olm)\in \gph \partial g$. 
Then $\olm\in \ri \sub g(\oz)$ if and only if 
\begin{equation}\label{dce}
D(\sub g)(\oz,\olm)(w)=D^*(\sub g)(\oz,\olm)(w)\quad \mbox{ for all}\;\; w\in \R^m.
\end{equation}
\end{Corollary}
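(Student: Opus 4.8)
The plan is to read the equivalence off from three ingredients already established above: the graphical‑derivative formula $\gph D(\sub g)(\oz,\olm)=\gph N_\Kg$ of Theorem~\ref{lem:proto1}(a), the regular normal cone formula $\rN_{\gph\partial g}(\oz,\olm)=\Kg^*\times\Kg$ of Theorem~\ref{limnc}(a), and the fact from Proposition~\ref{nonde1} that $\olm\in\ri\sub g(\oz)$ is the same as $\Kg$ being a linear subspace. Since the inclusion $D(\sub g)(\oz,\olm)(w)\subset D^*(\sub g)(\oz,\olm)(w)$ always holds by \cite[Theorem~13.57]{rw}, only the reverse inclusion in \eqref{dce} is at stake, and this is exactly the inclusion that is sensitive to the relative interior condition.

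For the implication ``$\olm\in\ri\sub g(\oz)\Rightarrow$ \eqref{dce}'', I would first use Proposition~\ref{nonde1} to conclude that $\Kg$ is a linear subspace, so $\Kg^*=\Kg^\perp$ and $-\Kg=\Kg$, and then combine Theorem~\ref{limnc}(c) with Theorem~\ref{limnc}(a) to obtain $N_{\gph\partial g}(\oz,\olm)=\rN_{\gph\partial g}(\oz,\olm)=\Kg^*\times\Kg$. Unravelling the defining relation for the coderivative then shows that $u\in D^*(\sub g)(\oz,\olm)(w)$ means exactly $u\in\Kg^\perp$ and $w\in\Kg$, which is precisely the statement $u\in N_\Kg(w)$; by Theorem~\ref{lem:proto1}(a) the latter reads $u\in D(\sub g)(\oz,\olm)(w)$, so $D^*(\sub g)(\oz,\olm)(w)\subset D(\sub g)(\oz,\olm)(w)$ and hence, in view of the reverse inclusion, \eqref{dce} follows.

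For the converse, I would argue by comparing domains. From $\gph D(\sub g)(\oz,\olm)=\gph N_\Kg$ one gets $\dom D(\sub g)(\oz,\olm)=\dom N_\Kg=\Kg$. On the other hand, the always-valid inclusion $\rN_{\gph\partial g}(\oz,\olm)\subset N_{\gph\partial g}(\oz,\olm)$ together with Theorem~\ref{limnc}(a) gives $\Kg^*\times\Kg\subset N_{\gph\partial g}(\oz,\olm)$; since $0\in\Kg^*$, for every $w\in-\Kg$ we have $(0,-w)\in N_{\gph\partial g}(\oz,\olm)$, hence $0\in D^*(\sub g)(\oz,\olm)(w)$ and so $-\Kg\subset\dom D^*(\sub g)(\oz,\olm)$. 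Now \eqref{dce} forces $\dom D(\sub g)(\oz,\olm)=\dom D^*(\sub g)(\oz,\olm)$, whence $-\Kg\subset\Kg$, and therefore $\Kg\subset-\Kg$ as well, so that $\Kg=-\Kg$ is a linear subspace; Proposition~\ref{nonde1} then yields $\olm\in\ri\sub g(\oz)$.

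The argument is short once the reduction-lemma consequences (Theorems~\ref{lem:proto1} and \ref{limnc}) are in hand, so I do not expect a genuine obstacle. The one subtlety to watch is that the graphical derivative $D(\sub g)(\oz,\olm)$ is \emph{not} the ``regular'' coderivative obtained by plugging $\rN_{\gph\partial g}(\oz,\olm)$ into the defining relation for $D^*$; these coincide only when $\Kg$ is a subspace, so the equivalence cannot be deduced from Theorem~\ref{limnc}(c) alone, and it is the domain-comparison step above that actually produces the subspace property in the converse direction.
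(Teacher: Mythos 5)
Your proposal is correct and follows essentially the same route as the paper: the forward direction combines Theorem~\ref{lem:proto1}(a) with Theorem~\ref{limnc}(a),(c) exactly as in the text, and your converse via domain comparison (using $u=0$ and $\rN_{\gph\partial g}(\oz,\olm)\subset N_{\gph\partial g}(\oz,\olm)$) is just a streamlined rephrasing of the paper's step of picking $w\in\Kg$, $u\in N_\Kg(w)$, noting $\Kg^*\times\Kg\subset N_{\gph\partial g}(\oz,\olm)$, and letting \eqref{dce} force $-w\in\Kg$ so that $\Kg$ is a subspace and Proposition~\ref{nonde1} applies.
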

\begin{proof} Suppose that $\olm\in \ri \sub g(\oz)$. Thus  the critical cone  $\Kg$ is a linear subspace. By Theorem~\ref{lem:proto1}(a), 
we have $T_{\gph \partial g}(\oz, \olm)=\gph N_\Kg=\Kg\times \Kg^\perp$. Moreover, we conclude from Theorem~\ref{limnc}(c)
that $N_{\gph \partial g}(\oz, \olm)=\Kg^\perp\times \Kg $. Combining these with the definitions of coderivative and proto-derivative  justifies \eqref{dce}.

Assume now that \eqref{dce} holds. We are going to show that $\Kg$ is a linear subspace. To this end, it suffices to show that if $w\in \Kg$, then $-w\in \Kg$. Pick $w\in \Kg$ and conclude from \eqref{proto4} that 
there is $u\in \R^m$ such that $u\in D(\partial g)(\oz, \olm)(w)=N_\Kg(w)$, which yields that $(w,u)\in\Kg\times \Kg^*$. This, combined with  \eqref{co4}, tells us that $(u,w)\in \Kg^*\times \Kg\subset N_{\gph \partial g}(\oz, \olm)$.
Employing  the definition of coderivative and \eqref{dce}, we obtain $u \in D^*(\sub g)(\oz,\olm)(-w)=D(\sub g)(\oz,\olm)(-w)=N_\Kg(-w)$, which clearly confirms that $-w\in \Kg$. 
Since $\Kg=N_{\sub g(\oz)}(\olm)$ and $\Kg$ is a linear subspace, we obtain $\olm\in \ri \sub g(\oz)$, which  completes the proof.
\end{proof}

\section{Strict Twice Epi-Differentiability of Polyhedral Functions}\label{sec4}
In this section, we study another important second-order variational property, called strict twice epi-differentiability, for  polyhedral functions and  continuous differentiability of 
proximal mappings for this class of functions. Once again, our results rely heavily on our extension of the reduction lemma for polyhedral functions. 
To achieve our goal, consider  a function $f: \R^n \to \oR$ with $\ox \in \R^n$ with $f(\ox)$ finite and  define the parametric  family of 
second-order difference quotients of $f$ at $\ox$ for $\ov\in \sub f(\ox)$ by 
$$
\Delta_t^2 f(\bar x , \ov)(w)=\frac{f(\ox+tw)-f(\ox)-t\langle \ov,\,w\rangle}{\frac {1}{2}t^2}
$$
for any $w\in \R^n$ and $t>0$. The {second subderivative} of $f$ at $\ox$ for $\ov$, denoted $\d^2 f(\bar x , \ov)$, is an extended-real-valued function defined  by 
\begin{equation*}\label{ssd}
\d^2 f(\bar x , \ov)(w)= \liminf_{\substack{
   t\searrow 0 \\
  w'\to w
  }} \Delta_t^2 f(\ox , \ov)(w'),\;\; w\in \R^n.
\end{equation*}
Following \cite[Definition~13.6]{rw},   $f$ is said to be {twice epi-differentiable} at $\bar x$ for $\ov$
if  the functions $  \Delta_t^2 f(\bar x , \ov)$ epi-converge to $  \d^2 f(\bar x,\ov)$ as $t\searrow 0$.  Further, we say that $f$ is {\em strictly} twice epi-differentiable at $\bar x$ for $\ov$ if 
the functions $  \Delta_t^2 f(  x , v)$ epi-converge  to a function as $t\searrow 0$, $(x,v)\to (\ox,\ov)$ with $f(x)\to f(\ox)$ and $(x,v)\in \gph \sub f$.  If this condition holds, the limit function is then the second subderivative $  \d^2 f(\bar x,\ov)$.
Twice epi-differentiability of extended-real-valued functions, introduced by Rockafellar in \cite{r85},  has been investigated for important classes of functions appearing in constrained and composite optimization 
problems   in \cite{r88, mms,ms20}. Its strict version, introduced in \cite{pr}, was only  studied in  \cite{pr3} for nonlinear programming and minimax problems and so it is tempting to 
ask when this property holds    and to explore its applications in parametric optimization. To this end, we define the {\em strict second subderivative} of $f$ at 
$\bar x$ for $\ov$ with  $\ov \in \partial f(\ox)$ at $w\in \R^n$ by 
\begin{equation*}
\d_s^2f(\ox, \ov)(w) = \liminf_{\substack{
t\searrow 0, \, w'\to w \\
(x, v) \xrightarrow{\gph \partial f}(\ox, \ov)\\
f(x)\to f(\ox)}} 
\Delta_t^2 f(x , v)(w') .
\end{equation*}
When $f$ is subdifferentially continuous at $\ox$ for $\ov$ in the sense of \cite[Definition~13.28]{rw}, we can drop the requirement $f(x)\to f(\ox)$ in the definition of the strict second subderivative.
According to \cite[Example~13.30]{rw}, convex functions are always subdifferentialy continuous. 
Clearly, we always have $\d_s^2f(\ox, \ov)(w)\le \d^2f(\ox, \ov)(w)$ for any $w\in \R^n$. If $f$ is strictly  twice epi-differentiable at $\bar x$ for $\ov$, the latter inequality becomes equality. 
Being able to  calculate the strict second subderivative of a function and comparing with its second subderivative can tell us when strict twice epi-differentiability should be expected for such a function. 
Note also that the second subderivative was exploited to characterize the quadratic growth condition for extended-real-valued functions in \cite[Theorem~13.24(c)]{rw}. Similarly, 
we can use the strict second subderivative to achieve a characterization of the uniform quadratic growth condition (cf. \cite[Definition~5.16]{bs}), which plays an important role in parametric optimization. 
This is beyond the scope of this paper and so we postpone it to our future research. 
Below, we use the characterization of the faces of the critical cone 
of a polyhedral function in Proposition~\ref{lem:faces} to find its strict second subderivative.

\begin{Proposition}\label{ssdp}
For a polyhedral function $g: \R^m \to \oR$  and  $(\oz, \olm)\in \gph \partial g$, its strict second subderivative can be calculated by 
\begin{equation}\label{st}
\d_s^2 g(\oz, \olm)(w) = \delta_{\Kg-\Kg}(w) \quad\textrm{ for all }\; w\in \R^m.
\end{equation}
\end{Proposition}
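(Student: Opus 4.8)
The plan is to compute $\d_s^2 g(\oz,\olm)$ directly from its definition as a liminf of second-order difference quotients $\Delta_t^2 g(z,\lm)(w')$, taken over $t\searrow 0$, $w'\to w$, and $(z,\lm)\xrightarrow{\gph\partial g}(\oz,\olm)$ (the requirement $g(z)\to g(\oz)$ being automatic here, since convex functions are subdifferentially continuous by \cite[Example~13.30]{rw}). The key structural input is Corollary~\ref{incri}(a): for $(z,\lm)\in\gph\partial g$ near $(\oz,\olm)$ one has the sandwich $\Kg\cap-\Kg\subset\Kgzv\subset\Kg-\Kg$. I also expect to use Proposition~\ref{lem:faces}, which says every nearby critical cone $\Kgzv$ is of the form $F_1-F_2$ for faces $F_2\subset F_1$ of $\Kg$, and moreover that all such pairs of faces are realized by nearby $(z,\lm)$. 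Throughout I will also exploit Proposition~\ref{fopg}(b): near $\oz$, $g$ agrees with its linearization, $g(\oz+w)=g(\oz)+\d g(\oz)(w)$ for small $w\in T_{\dom g}(\oz)$, and likewise at each nearby $z$.

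First I would pin down $\Delta_t^2 g(z,\lm)(w')$ for a fixed nearby $(z,\lm)\in\gph\partial g$ and small $tw'$. Writing $g(z+tw')=g(z)+\d g(z)(tw')=g(z)+t\,\d g(z)(w')$ via Proposition~\ref{fopg}(b) (valid once $tw'\in T_{\dom g}(z)\cap\B_r(0)$, i.e. once $w'$ stays in a fixed bounded region and $t$ is small), the numerator becomes $t\big(\d g(z)(w')-\la\lm,w'\ra\big)$, so $\Delta_t^2 g(z,\lm)(w')=\tfrac{2}{t}\big(\d g(z)(w')-\la\lm,w'\ra\big)$. Since $\lm\in\partial g(z)$ the quantity $\d g(z)(w')-\la\lm,w'\ra$ is $\ge 0$, and it equals $0$ precisely when $w'\in K_g(z,\lm)$. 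Hence for such $w'$ the difference quotient is $0$ when $w'\in K_g(z,\lm)$ and blows up to $+\infty$ as $t\searrow 0$ otherwise (when $w'$ lies in $\dom\d g(z)=T_{\dom g}(z)$ but outside the critical cone), and it is identically $+\infty$ when $w'\notin T_{\dom g}(z)$. So $\Delta_t^2 g(z,\lm)(\cdot)$ converges, as $t\searrow 0$, to $\delta_{K_g(z,\lm)}$.

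Now I take the liminf over $(z,\lm)\to(\oz,\olm)$ as well. For the upper bound $\d_s^2 g(\oz,\olm)(w)\le\delta_{\Kg-\Kg}(w)$: given $w\in\Kg-\Kg$, by Proposition~\ref{lem:faces} I can choose a nearby $(z,\lm)$ with $K_g(z,\lm)=F_1-F_2=\Kg-\Kg$ (taking $F_1$ and $F_2$ so that their difference is the whole lineality-quotient; in fact $F_1=\Kg$, $F_2$ a suitable face works when $\Kg-\Kg$ is targeted, but more carefully one picks the pair realizing exactly the minimal $\Kgzv$ containing $w$) — then along $w'=w$, $t\searrow 0$, $(z,\lm)$ chosen appropriately, the difference quotients are eventually $0$, giving liminf $\le 0=\delta_{\Kg-\Kg}(w)$. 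For $w\notin\Kg-\Kg$: by Corollary~\ref{incri}(a) every nearby $\Kgzv\subset\Kg-\Kg$, so $w\notin K_g(z,\lm)$ for all nearby $(z,\lm)$; moreover $w$ stays a fixed positive distance from all these cones (there are finitely many faces, hence finitely many such cones), so along any admissible sequence the difference quotients tend to $+\infty$, giving $\d_s^2 g(\oz,\olm)(w)=+\infty=\delta_{\Kg-\Kg}(w)$. Combining the two directions yields \eqref{st}.

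The main obstacle I anticipate is the careful bookkeeping in the upper-bound direction: one must produce, for each $w\in\Kg-\Kg$, an admissible sequence $(t_k,w_k',z_k,\lm_k)$ realizing the value $0$ in the liminf, which requires invoking the ``conversely'' part of Proposition~\ref{lem:faces} to get $(z_k,\lm_k)$ with $K_g(z_k,\lm_k)\ni w$ while keeping $(z_k,\lm_k)$ arbitrarily close to $(\oz,\olm)$ and simultaneously keeping $t_k w_k'$ small enough that the linearization identity of Proposition~\ref{fopg}(b) applies at $z_k$ (the radius $r$ there may depend on $z_k$, but since the active index sets stabilize along the sequence, a uniform radius can be extracted). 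A secondary subtlety is justifying that $\dom\Delta_t^2 g(z,\lm)(\cdot)=T_{\dom g}(z)$ for small $t$ and that points outside the finitely many nearby critical cones are uniformly bounded away from all of them, so that the ``$+\infty$'' conclusion is genuinely uniform over the admissible perturbations of $(z,\lm)$; this is where the finiteness of the face lattice of $\Kg$ does the work.
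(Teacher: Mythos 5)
Your first half -- showing that $\d_s^2 g(\oz,\olm)$ vanishes on $\Kg-\Kg$ -- is essentially the paper's argument: Proposition~\ref{lem:faces} (take $F_1=F_2=\Kg$, not ``$F_1=\Kg$, $F_2$ a suitable face'') supplies $(z^k,\lm^k)\to(\oz,\olm)$ in $\gph\partial g$ with $K_g(z^k,\lm^k)=\Kg-\Kg$, and Proposition~\ref{fopg}(b) applied at $z^k$, with $t_k$ chosen \emph{after} $z^k$, makes the quotients vanish; the $z$-dependent radius is harmless there because you diagonalize. The genuine gap is in the other half, $\dom\d_s^2 g(\oz,\olm)\subset\Kg-\Kg$. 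Two problems. First, the identity $\Delta_t^2 g(z,\lm)(w')=\tfrac{2}{t}\big(\d g(z)(w')-\la\lm,w'\ra\big)$ is not available along an arbitrary admissible sequence: the radius in Proposition~\ref{fopg}(b) depends on $z$ and can shrink faster than $t_k$ even when the active index sets are constant (try $g=\delta_{(-\infty,0]}$, $z^k=-1/k$), so your claim that ``a uniform radius can be extracted'' is false; what you actually have, and all you need, is the one-sided bound $\Delta_t^2 g(z,\lm)(w')\ge\tfrac{2}{t}\big(\d g(z)(w')-\la\lm,w'\ra\big)$, valid for every $t>0$ by convexity. Second, and decisively, the inference from ``$w$ keeps a positive distance from the finitely many nearby critical cones'' to ``the quotients blow up'' requires a lower bound $\d g(z)(w')-\la\lm,w'\ra\ge c>0$ uniform over all admissible $(z,\lm,w')$. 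The zero set of this gap is indeed $K_g(z,\lm)\subset\Kg-\Kg$ by Corollary~\ref{incri}(a), but the gap itself is the support function of $\partial g(z)-\lm$, which varies with the \emph{continuous} parameter $\lm$; finiteness of the face lattice controls only the zero sets, not a modulus of positivity, so the crucial step is asserted rather than proved.

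The uniformity you need is true, but proving it requires precisely the ingredient on which the paper's argument turns. Since $w\notin\Kg-\Kg$ and $(\Kg-\Kg)^\perp=T_{\partial g(\oz)}(\olm)\cap-T_{\partial g(\oz)}(\olm)$ by \eqref{cricone1}, pick $u$ in this subspace with $\la u,w\ra\neq0$ and, via \eqref{tanrep}, $\alpha>0$ with $\olm\pm\alpha u\in\partial g(\oz)$. For $(z,\lm)\in\gph\partial g$ near $(\oz,\olm)$, Theorem~\ref{thm:reduction} (or Lemma~\ref{lem:indset}) gives $z-\oz\in\Kg$ and $\olm\in\partial g(z)$; since then $\la u,z-\oz\ra=0$ and $\la\olm,z-\oz\ra=g(z)-g(\oz)$, the subgradient inequalities at $\oz$ upgrade to $\olm\pm\alpha u\in\partial g(z)$, whence $\d g(z)(w')-\la\lm,w'\ra\ge\la\olm\pm\alpha u-\lm,w'\ra\ge\alpha|\la u,w'\ra|-\|\lm-\olm\|\,\|w'\|$, which stays above a positive constant for $w'$ near $w$ and $\lm$ near $\olm$. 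The paper packages the same device as a contradiction instead of a uniform bound: along any sequence with finite second-order quotients the first-order gap tends to zero, and the inequalities generated by $\olm\pm\alpha u$ then force $\la u,w\ra=0$ for every such $u$, i.e. $w\in\Kg-\Kg$. So your outline is repairable, but as written the lower-bound half does not go through without supplying this estimate.
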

\begin{proof}
We first prove that $\d_s^2 g(\oz, \olm)(w)=0$ for all $w\in\Kg-\Kg$. First, observe that 
the convexity of $g$ yields $\d_s^2 g(\oz, \olm)(w)\geq 0$ for all $w\in \R^m$. To obtain the opposite inequality, pick 
$w\in\Kg-\Kg$. By Proposition~\ref{lem:faces}, we find a sequence $(\zk, \lmk)\to(\oz, \olm)$ such that  $(z^k,\lm^k)\in \gph\partial g$ and  $K_g(\zk, \lmk)= \Kg-\Kg$. This  implies that $w\in K_g(\zk, \lmk)$, 
which is equivalent to saying that $\d g(z^k)(w)=\la \lmk,w\ra$. Employing Proposition~\ref{fopg}(b), we find a sequence $t_k\searrow 0$ such that $g(z^k+t_kw)-g(z^k)=\d g(z^k)(t_kw)=t_k \d g(z^k)(w)$.
By definition, we get $\Delta_{t_k}^2 g(\zk , \lmk)(w)=0$, which results in 
\begin{equation*}
\d_s^2 g(\oz, \olm)(w) \le \lim_{k\to \infty} \Delta_{t_k}^2 g(\zk , \lmk)(w)=0.
\end{equation*}
This confirms that  $\d_s^2 g(\oz, \olm)(w)= 0$ for any $w\in\Kg-\Kg$. To obtain \eqref{st}, it suffices to 
 to justify that $\dom \d_s^2 g(\oz, \olm) \subset \Kg-\Kg$. Taking any $w\in \R^m$ with $\d_s^2 g(\oz, \olm)(w)<\infty$, we can find sequences $t_k\searrow 0$, $\wk\to w$, and $(\zk, \lmk)\to (\oz, \olm)$ with $(\zk, \lmk)\in \gph\partial g$ such that 
\begin{equation*}
\lim_{k\to\infty}\frac{g(\zk+t_k\wk)-g(\zk)-t_k\langle \lmk, \wk\rangle}{\tfrac{1}{2}t_k^2} <\infty.
\end{equation*}
This, together with the convexity of $g$, allows us to find a constant $M>0$ such that 
\begin{equation*}
0\leq\frac{g(\zk+t_k\wk)-g(\zk)-t_k\langle \lmk, \wk\rangle}{t_k}\leq Mt_k 
\end{equation*}
for all $k$ sufficiently large and therefore to obtain 
\begin{equation}\label{st2}
\lim_{k\to\infty}\frac{g(\zk+t_k\wk)-g(\zk)-t_k\langle \lmk, \wk\rangle}{t_k}=0.
\end{equation}
Take an arbitrary 
$$
u\in \big(\Kg-\Kg\big)^\perp= \Kg^*\cap -\Kg^*= T_{\partial g(\oz)}(\olm)\cap -T_{\partial g(\oz)}(\olm),
$$
where the last equality results from \eqref{cricone1}.  We now show that $\langle u, w\rangle =0$, which in turn yields  $w\in \Kg - \Kg$. To this end, by \eqref{tanrep}, 
we find   $\alpha >0$ such that $\olm\pm\alpha u \in \partial g(\oz)$. This clearly implies  that
\begin{equation}\label{st1}
\langle \olm\pm\alpha u, \zk+t_k\wk-\oz\rangle\leq g(\zk+t_k\wk)-g(\oz).
\end{equation}
Employing  Theorem~\ref{thm:reduction}, we can conclude that  $(\zk - \oz, \lmk - \olm) \in \gph N_\Kg$ for all $k$ sufficiently  large. This tells us that   $\zk-\oz\in \Kg=N_{\partial g(\oz)}(\olm)$ for all $k$ sufficiently large. Moerover,  Lemma~\ref{lem:indset} 
confirms that  $\olm \in \partial g(\zk)$ for all $k$ sufficiently  large. Combining these, we can conclude that 
\begin{equation*}
\langle u, \zk-\oz\rangle = 0\quad\textrm{ and }\quad \langle \olm, \zk - \oz\rangle = g(\zk)-g(\oz).
\end{equation*}
These, together with \eqref{st1}, yield
\begin{equation*}
\langle \olm\pm\alpha u, t_k\wk\rangle\leq g(\zk +t_k\wk)-g(\zk),
\end{equation*}
and therefore we get 
\begin{equation*}
\langle \olm - \lmk \pm\alpha u, \wk\rangle \leq \frac{g(\zk+t_k\wk)-g(\zk)-t_k\langle \lmk, \wk\rangle}{t_k}.
\end{equation*}
Passing to the limit as $k\to\infty$ and employing \eqref{st2}, we arrive at  $\pm\alpha\langle u, w\rangle \leq 0$, meaning  $\langle u, w\rangle = 0$. This confirms that  $w\in \Kg - \Kg$ and hence completes the proof.
\end{proof}

The established formula for the strict second subderivative of a polyhedral function in \eqref{st} suggests   a path forward in the study of strict twice epi-differentiability of this class of functions.
As pointed out earlier, strict twice epi-differentiability requires that the second subderivative and strict second subderivative coincide. For polyhedral functions, Proposition~\ref{ssdp}
immediately suggests that the given subgradient $\olm$ must belong to  $\ri \sub g(\oz)$; see the proof of the implication (a)$\implies$(b) in Theorem~\ref{riste} for a detailed proof.
One may wonder whether the opposite holds as well, namely the relative interior condition $\olm\in \ri \sub g(\oz)$ implies strict twice epi-differentiability of polyhedral functions.
Our next goal is to indeed demonstrate that this is true.  In doing so, we rely heavily upon  the reduction lemma, obtained in Theorem~\ref{thm:reduction}, as well as 
a characterization of strict twice epi-differentiablity of prox-regular functions from \cite{pr2}. 
The next result is a simplified version of  \cite[Corollary~4.3]{pr2},  and presents a useful characterization of strict twice epi-differentiability of convex functions, 
which  comprise an important subclass of prox-regular functions
according to \cite[Example~3.30]{rw}.

\begin{Proposition}[characterization of strict twice epi-differentiability] \label{sted}
Assume that $f:\R^n\to \oR$, $\ox\in \R^n$ with $f(\ox)$ finite, and $\ov\in \sub f(\ox)$  and that $f$ is a convex function.
Then there is a neighborhood  $O$ of $(\ox,\ov)$ such that for any $(x,v)\in O\cap \gph f$, the following properties are equivalent:
\begin{itemize}[noitemsep,topsep=2pt]
\item [ \rm {(a)}] $f$ is strictly twice epi-differentiable at $x$ for $v$;
\item [ \rm {(b)}] $\sub f$ is strictly proto-differentiable at $x$ for $v$.
\end{itemize}
\end{Proposition}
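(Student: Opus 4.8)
The plan is to obtain this statement as a specialization of the general characterization of strict twice epi-differentiability for prox-regular functions recorded in \cite[Corollary~4.3]{pr2}, by checking that its standing hypotheses are met automatically here. The first step is to recall from \cite[Example~13.30]{rw} that a convex function $f:\R^n\to\oR$ is prox-regular and subdifferentially continuous at every $(\ox,\ov)\in\gph\sub f$; in fact prox-regularity holds with parameter $r=0$, so the thresholds associated with prox-regularity can be taken uniformly over a whole neighborhood of $(\ox,\ov)$ in $\gph\sub f$. This uniformity is precisely what delivers a \emph{single} neighborhood $O$ that serves every pair $(x,v)\in O\cap\gph\sub f$ at once, rather than only the base point $(\ox,\ov)$.

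The mechanism behind \cite[Corollary~4.3]{pr2} that I would invoke runs through the proximal mapping. For convex $f$ and small $\lambda>0$, the Moreau envelope $e_\lambda f$ is of class $\C^{1,1}$ with $\nabla e_\lambda f=\lambda^{-1}(I-\prox_{\lambda f})$, and $\prox_{\lambda f}$ is single-valued and globally Lipschitzian. Twice epi-differentiability of $f$ at a point corresponds to differentiability (in the proto-derivative sense) of $\prox_{\lambda f}$ at the shifted point, which is in turn equivalent to proto-differentiability of $\sub f$; passing to the strict versions, strict twice epi-differentiability of $f$ at $x$ for $v$ is equivalent to $\prox_{\lambda f}$ being strictly differentiable near $x+\lambda v$, equivalently to strict proto-differentiability of $\sub f$ at $x$ for $v$. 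Because $f$ is subdifferentially continuous, the $f$-attentive localization built into the definition of the strict second subderivative collapses to an ordinary localization in $\gph\sub f$ — exactly the reduction noted in the paragraph preceding Proposition~\ref{ssdp} — and this is the ``simplification'' of \cite[Corollary~4.3]{pr2} that yields the clean equivalence stated here.

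The one point that genuinely requires care, and which I regard as the main obstacle, is the bookkeeping of definitions: confirming that the notion of strict twice epi-differentiability used above (epi-convergence of $\Delta_t^2 f(x,v)$ as $t\searrow0$ and $(x,v)\xrightarrow{\gph\sub f}(\ox,\ov)$ with $f(x)\to f(\ox)$) coincides with the one in \cite{pr2}, and that strict proto-differentiability in the sense of \eqref{sproto} and \eqref{tancc} — i.e.\ equality of the Clarke tangent cone and the paratingent cone to $\gph\sub f$ — is the notion appearing there. Once these identifications are made, together with the uniformity of the prox-regularity data over a neighborhood of $(\ox,\ov)$, \cite[Corollary~4.3]{pr2} supplies the neighborhood $O$ and the equivalence (a)$\Longleftrightarrow$(b) for all $(x,v)\in O\cap\gph\sub f$, which completes the proof.
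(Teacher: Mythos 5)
Your proposal takes essentially the same route as the paper: the paper gives no independent argument either, presenting the proposition as a simplified specialization of \cite[Corollary~4.3]{pr2} to convex functions, which are automatically prox-regular and subdifferentially continuous. Your extra remarks on the proximal-mapping mechanism and on matching the definitions of strict twice epi-differentiability and strict proto-differentiability are consistent with that citation-based justification and introduce no gap.
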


The next result is an immediate consequence of Theorem~\ref{lem:proto1}(c) and reveals that polyhedral functions are always strictly twice epi-differentiable  
  under a relative interior condition. 
\begin{Theorem}[strict twice epi-differentiability of polyhedral functions]\label{riste}
Assume that $g:\R^m\to \oR$ is a polyhedral function and  that $(\oz,\olm)\in \gph \sub g$. Then 
 the following  properties are equivalent:
\begin{itemize}[noitemsep,topsep=2pt]
\item [ \rm {(a)}]  there is a neighborhood $O$ of $(\oz,\olm)$ such that for any $(z,\lm)\in O\cap \gph \sub g$, $g$ is strictly twice epi-differentiable at $z$ for $\lm$;
\item [ \rm {(b)}] $\olm \in \ri \partial g(\oz)$.
\end{itemize}
\end{Theorem}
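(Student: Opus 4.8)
The plan is to prove the equivalence $(a)\Leftrightarrow(b)$ by combining Proposition~\ref{sted} (which locally identifies strict twice epi-differentiability of the convex function $g$ with strict proto-differentiability of $\sub g$) with Theorem~\ref{lem:proto1}(c) (which characterizes strict proto-differentiability of $\sub g$ near $(\oz,\olm)$ by the relative interior condition $\olm\in\ri\sub g(\oz)$). Since $g$ is polyhedral, it is convex, hence prox-regular and subdifferentially continuous, so Proposition~\ref{sted} applies: there is a neighborhood $O_1$ of $(\oz,\olm)$ on which, for every $(z,\lm)\in O_1\cap\gph\sub g$, strict twice epi-differentiability of $g$ at $z$ for $\lm$ is equivalent to strict proto-differentiability of $\sub g$ at $z$ for $\lm$.

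For the implication $(b)\Rightarrow(a)$: assume $\olm\in\ri\sub g(\oz)$. Theorem~\ref{lem:proto1}(c) gives a neighborhood $O_2$ of $(\oz,\olm)$ such that $\sub g$ is strictly proto-differentiable at $z$ for $\lm$ for every $(z,\lm)\in O_2\cap\gph\sub g$. Intersecting with $O_1$ and invoking the equivalence from Proposition~\ref{sted} pointwise on $O_1\cap O_2$, we obtain that $g$ is strictly twice epi-differentiable at $z$ for $\lm$ for all $(z,\lm)$ in that common neighborhood, which is exactly (a).

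For the implication $(a)\Rightarrow(b)$: assume (a) holds with neighborhood $O$. In particular $g$ is strictly twice epi-differentiable at $\oz$ for $\olm$. By Proposition~\ref{sted} (shrinking $O$ to lie in $O_1$), $\sub g$ is strictly proto-differentiable at $\oz$ for $\olm$; then the converse direction of Theorem~\ref{lem:proto1}(c) forces $\olm\in\ri\sub g(\oz)$, which is (b). Alternatively, one can give a self-contained argument via Proposition~\ref{ssdp}: strict twice epi-differentiability at $(\oz,\olm)$ requires $\d_s^2 g(\oz,\olm)=\d^2 g(\oz,\olm)$; by Proposition~\ref{ssdp} the left side is $\delta_{\Kg-\Kg}$, while the standard second subderivative of a polyhedral function at $(\oz,\olm)$ is $\delta_{\Kg}$ (from twice epi-differentiability, cf. \cite[Proposition~13.9]{rw}), so equality of these two indicator functions gives $\Kg=\Kg-\Kg$, i.e. $\Kg$ is a linear subspace, and Proposition~\ref{nonde1} then yields $\olm\in\ri\sub g(\oz)$.

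The proof is essentially an assembly of already-established tools, so there is no serious obstacle; the only point requiring care is bookkeeping of neighborhoods — making sure the neighborhood delivered by Theorem~\ref{lem:proto1}(c) and the one from Proposition~\ref{sted} are intersected so that the pointwise equivalence holds simultaneously at every nearby point in $\gph\sub g$, and noting that subdifferential continuity of convex functions (cf. \cite[Example~13.30]{rw}) lets us dispense with the condition $g(x)\to g(\oz)$ in the definition of the strict second subderivative.
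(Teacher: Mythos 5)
Your proposal is correct and essentially reproduces the paper's proof: the direction (b)$\implies$(a) via Proposition~\ref{sted} together with Theorem~\ref{lem:proto1}(c) is exactly the paper's argument, and your ``alternative'' argument for (a)$\implies$(b) --- equating $\d_s^2 g(\oz,\olm)=\dd_{\Kg-\Kg}$ from Proposition~\ref{ssdp} with $\d^2 g(\oz,\olm)=\dd_{\Kg}$, concluding that $\Kg$ is a linear subspace, and invoking Proposition~\ref{nonde1} --- is precisely what the paper does. Your primary route for (a)$\implies$(b) (Proposition~\ref{sted} plus the converse of Theorem~\ref{lem:proto1}(c)) is also fine, since (a) combined with Proposition~\ref{sted} yields strict proto-differentiability of $\sub g$ at every point of $\gph\sub g$ near $(\oz,\olm)$, which is the neighborhood form that the converse direction of Theorem~\ref{lem:proto1}(c) requires.
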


\begin{proof} We begin with the implication  (b)$\implies$(a). By Proposition~\ref{sted}, (a) is equivalent to strict proto-differentiability of $\sub g$ at  $z$ for $\lm$ for any $(z,\lm)\in O\cap \gph \sub g$.
It follows from Theorem~\ref{lem:proto1}(c) that the latter condition holds since $\olm \in \ri \partial g(\oz)$. This shows that (b) yields (a).  
 To prove the opposite implication, suppose that (a) holds. 
As pointed out earlier, when strict twice epidifferentiablity holds for a function, its second subderivative and   strict  second subderivative coincide. 
Strict twice epi-differentiability of  $g$ at $\oz$ for $\olm$, together with \eqref{st},  tells us that 
$$
\dd_{K_g(\oz,\olm)-\Kg}= \d_s^2 g(\oz,\olm)= \d^2 g(\oz,\olm)=\dd_{K_g(\oz,\olm)},
$$
where the last equality comes from \cite[Propsoition~13.9]{rw}. 
 This implies that $K_g(\oz,\olm)-\Kg=\Kg$, meaning that $\Kg$ is a linear subspace. By Proposition~\ref{nonde1}, we get $\olm\in \ri \sub g(\oz)$, which completes the proof.
\end{proof}

Combining the obtained characterization of strict twice epi-differentiability  of polyhedral functions with \eqref{ricond} allows us to 
conclude that  this property is preserved   under the Fenchel conjugate for polyhedral functions, as shown below. 

\begin{Corollary} Assume that $g:\R^m\to \oR$ is a polyhedral function and  that $(\oz,\olm)\in \gph \sub g$. Then 
the following properties are equivalent:
\begin{itemize}[noitemsep,topsep=2pt]
\item [ \rm {(a)}]  there is a neighborhood $O$ of $(\oz,\olm)$ such that for any $(z,\lm)\in O\cap \gph \sub g$, $g$ is strictly twice epi-differentiable at $z$ for $\lm$;
\item [ \rm {(b)}]  there is a neighborhood $U$ of $(\olm,\oz)$ such that for any $(\lm,z)\in U\cap \gph \sub g^*$, $  g^*$ is strictly twice epi-differentiable at $\lm$ for $z$;
\item [ \rm {(c)}]  $\olm\in \ri \sub g(\oz)$;
\item [ \rm {(d)}]  $\oz\in \ri \sub g^*(\olm)$.
\end{itemize}
\end{Corollary}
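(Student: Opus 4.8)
The plan is to establish the equivalence chain by combining the already-proved Theorem~\ref{riste} with the conjugacy relation \eqref{ricond} from Corollary~\ref{polarity}. Specifically, Theorem~\ref{riste} gives us directly that (a) $\Longleftrightarrow$ (c), since (a) here is verbatim the local strict twice epi-differentiability statement of that theorem and (c) is the relative interior condition $\olm\in\ri\sub g(\oz)$. Next, I would invoke the fact recorded before Corollary~\ref{polarity} that the Fenchel conjugate $g^*$ of a polyhedral function is again polyhedral (cf. \cite[Theorem~11.14(a)]{rw}); this lets me apply Theorem~\ref{riste} a second time, now to $g^*$ at the point $(\olm,\oz)\in\gph\sub g^*$ (recall $\olm\in\sub g(\oz)\iff\oz\in\sub g^*(\olm)$ by the standard Fenchel--Young relation for convex functions). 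That application yields (b) $\Longleftrightarrow$ (d), where (d) is $\oz\in\ri\sub g^*(\olm)$.

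The remaining link is (c) $\Longleftrightarrow$ (d), and this is exactly the content of \eqref{ricond} in Corollary~\ref{polarity}, which states that $\olm\in\ri\sub g(\oz)$ if and only if $\oz\in\ri\sub g^*(\olm)$. Chaining these three equivalences --- (a) $\Leftrightarrow$ (c), (c) $\Leftrightarrow$ (d), (d) $\Leftrightarrow$ (b) --- closes the loop and establishes that all four statements are equivalent. So the proof is essentially a short bookkeeping argument that assembles previously established results; there is no new analytic content to develop.

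The only point requiring a little care --- and the one I would flag as the mild ``obstacle'' --- is making sure the hypotheses of Theorem~\ref{riste} are legitimately met when applied to $g^*$ at $(\olm,\oz)$: one must confirm that $(\olm,\oz)\in\gph\sub g^*$, which follows from $(\oz,\olm)\in\gph\sub g$ via the biconjugacy/inversion rule $\sub g^* = (\sub g)^{-1}$ valid for proper lower semicontinuous convex functions (polyhedral functions are such), and that $g^*(\olm)$ is finite, which holds because $g^*(\olm)=\la\olm,\oz\ra - g(\oz)$ is finite whenever $\olm\in\sub g(\oz)$ and $g(\oz)$ is finite. Once these are in place, the neighborhoods $O$ (around $(\oz,\olm)$ in $\gph\sub g$) and $U$ (around $(\olm,\oz)$ in $\gph\sub g^*$) are precisely the ones furnished by the two applications of Theorem~\ref{riste}, and no compatibility between them needs to be engineered since each equivalence with (c) or (d) is self-contained. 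Thus the proof reduces to: apply Theorem~\ref{riste} to $g$, apply it to $g^*$, and invoke \eqref{ricond}.
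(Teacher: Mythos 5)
Your proposal is correct and follows essentially the same route as the paper: the paper likewise derives (a)$\Leftrightarrow$(c) and (b)$\Leftrightarrow$(d) from two applications of Theorem~\ref{riste} (using the polyhedrality of $g^*$ from \cite[Theorem~11.14(a)]{rw}) and obtains (c)$\Leftrightarrow$(d) from Corollary~\ref{polarity}. Your extra verification that $(\olm,\oz)\in\gph\sub g^*$ with $g^*(\olm)$ finite is a harmless elaboration of what the paper leaves implicit.
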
 

\begin{proof}
We know from \cite[Theorem~11.14(a)]{rw} that $g^*$ is a polyhedral function. We obtain the equivalence of   (a) and (c) and of  (b) and (d) from  Theorem~\ref{riste}.
Corollary~\ref{polarity} also tells us that (c) and (d) are equivalent, which completes the proof.
\end{proof}

We close this section with another   consequence of the reduction lemma for polyhedral function 
about smoothness of the Moreau envelope and  proximal mapping for polyhedral functions. 
To this end,  recall  that for a function $f\colon\R^n\to\oR$ and parameter value  $r>0$,  the {  Moreau envelope} function  $e_r f$ and the proximal mapping $\prox_{rf}$ are defined, respectively,  by 
\begin{equation*}\label{moreau}
e_r f(x)=\inf_{w\in \R^n}\Big\{f(w)+\frac{1}{2r}\|w-x\|^2\Big\},
\end{equation*}
and
\begin{equation*}\label{proxmap}
\prox_{r f}(x)= \argmin_{w\in \R^n}\Big\{f(w)+\frac{1}{2r}\|w-x\|^2\Big\}.
\end{equation*}
 When $f$ is convex,  the subdifferential sum rule from convex analysis 
   implies that 
\begin{equation}\label{prsub}
{\rm{prox}}_{rf}(x)=\big(I+r\sub f\big)^{-1}(x),\;\;x\in \R^n,
\end{equation}
where $I$ stands for the $n\times n$ identity matrix. Furthermore, it is known that the envelope function $e_rf$ is continuously differentiable (cf. \cite[Theorem~2.26]{rw}) for any convex function $f$.
If, in addition, $f$ is a polyhedral function, we deduce from \cite[Proposition~13.9]{rw} and \cite[Exercise~13.45]{rw} that  the proximal mapping $\prox_{rf}$ is  semidifferentiable.
According to \cite[Proposition~2D.1]{DoR14}, the latter is equivalent to directional differentiability of  $\prox_{rf}$. 
Below, we present a simple but useful characterization of continuous differentiability of the proximal mapping of polyhedral functions.

\begin{Theorem}\label{prodiff} Assume that $g:\R^m\to \oR$ is a polyhedral function and  that $(\oz,\olm)\in \gph \sub g$. Then 
the following properties are equivalent:
\begin{itemize}[noitemsep,topsep=2pt]
\item [ \rm {(a)}]  $\olm\in \ri \sub g(\oz)$;
\item [ \rm {(b)}]  for any $r>0$, the envelope function $e_r g$ is  ${\cal C}^2$ in a neighborhood of  $\oz+r\olm$;
\item [ \rm {(c)}]  for any $r>0$, the proximal mapping $\prox_{rg}$ is ${\cal C}^1$ in a neighborhood of  $\oz+r\olm$.
\end{itemize}
Furthermore, if  $\olm\in \ri \sub g(\oz)$ and $r>0$, then for any $x$ sufficiently close to $\oz+r\olm$, the Jacobian matrix $\nabla (\prox_{rg})(x)$  and the Hessian matrix $\nabla^2(e_{r}g)(x)$ can be calculated, respectively, by 
$$
\nabla (\prox_{rg})(x)=P_{K_g(\oz,\olm)}\quad \mbox{and}\quad \nabla^2(e_{r}g)(x)=\frac{1}{r}\big(I-P_{K_g(\oz,\olm)}\big)=\frac{1}{r}P_{K_{g^*}(\olm,\oz)}.
$$
\end{Theorem}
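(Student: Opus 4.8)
The plan is to leverage the machinery already established: Theorem~\ref{riste} for the equivalence of (a) with strict twice epi-differentiability, Proposition~\ref{sted} to convert this into strict proto-differentiability of $\sub g$, and then standard facts relating smoothness of $\prox_{rg}$ and $e_rg$ to second-order properties of $g$. First I would record the well-known identity (valid for any convex $f$)
\begin{equation*}
\nabla(e_r f)(x)=\frac{1}{r}\big(x-\prox_{rf}(x)\big),
\end{equation*}
which shows immediately that $e_r g$ is $\mathcal{C}^2$ near a point if and only if $\prox_{rg}$ is $\mathcal{C}^1$ there; this gives the equivalence (b)$\Leftrightarrow$(c) for free, and reduces the Hessian formula to the Jacobian formula for $\prox_{rg}$.

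Next I would prove (a)$\Rightarrow$(c). Assume $\olm\in\ri\sub g(\oz)$. By \eqref{prsub} we have $\prox_{rg}=(I+r\sub g)^{-1}$, and since $\sub g$ is proto-differentiable (always, for polyhedral $g$), a standard chain rule for proto-derivatives of inverses gives that $\prox_{rg}$ is semidifferentiable at $x:=\oz+r\olm$ with semiderivative $(I+rD(\sub g)(\oz,\olm))^{-1}$; using Theorem~\ref{lem:proto1}(a) this equals $(I+rN_\Kg)^{-1}=\prox_{r\dd_\Kg}=P_\Kg$ when $\Kg$ is a subspace, because then $N_\Kg=\Kg^\perp$ and $(I+r\Kg^\perp)^{-1}=P_\Kg$. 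The key upgrade from \emph{directional} to \emph{continuous} differentiability comes from Corollary~\ref{incri}(b): there is a neighborhood of $(\oz,\olm)$ in $\gph\sub g$ on which $K_g(z,\lm)=\Kg$, so the semiderivative of $\prox_{rg}$ is the \emph{constant} linear map $P_\Kg$ on a whole neighborhood of $x$. A function that is semidifferentiable on a neighborhood with a continuous (here, constant) derivative is $\mathcal{C}^1$ there; this yields (c) together with the formula $\nabla(\prox_{rg})(x)=P_\Kg$, and then $\nabla^2(e_rg)(x)=\frac{1}{r}(I-P_\Kg)$ from the identity above. The equality $I-P_\Kg=P_{\Kg^\perp}=P_{K_{g^*}(\olm,\oz)}$ uses \eqref{cricone2}, which gives $K_{g^*}(\olm,\oz)=\Kg^*=\Kg^\perp$ since $\Kg$ is a subspace.

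For the converse (c)$\Rightarrow$(a), suppose $\prox_{rg}$ is $\mathcal{C}^1$ near $x=\oz+r\olm$. Then $\nabla(\prox_{rg})(x)$ is a symmetric matrix (being the Jacobian of a gradient mapping composed with $I$—more precisely $\prox_{rg}=I-r\nabla(e_rg)$, so its Jacobian is $I-r\nabla^2(e_rg)(x)$, symmetric). On the other hand, the semiderivative of $\prox_{rg}$ at $x$ is always $(I+rN_\Kg)^{-1}$, which when $\prox_{rg}$ is differentiable forces $(I+rN_\Kg)^{-1}$ to be linear, i.e.\ single-valued and linear as a mapping; since $(I+rN_\Kg)^{-1}=P_{\Kg}$ only requires $\Kg$ to be a polyhedral cone but linearity of this proximal-type map forces $N_\Kg$ to be linear-valued, hence $\Kg$ a subspace. (Equivalently, invoke Theorem~\ref{lem:proto1}(c): differentiability of $\prox_{rg}$ at $x$ implies strict proto-differentiability of $\sub g$ at $\oz$ for $\olm$ via Proposition~\ref{sted} once we have the neighborhood version, forcing $\olm\in\ri\sub g(\oz)$.) Then Proposition~\ref{nonde1} gives $\olm\in\ri\sub g(\oz)$.

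The main obstacle I anticipate is the passage from pointwise semidifferentiability to genuine $\mathcal{C}^1$ smoothness: one must argue carefully that constancy of $K_g(z,\lm)=\Kg$ on a neighborhood (Corollary~\ref{incri}(b)) propagates to constancy of the semiderivative map of $\prox_{rg}$ along a neighborhood of $x$ in the \emph{range} variable, not just along $\gph\sub g$; this requires knowing that $\prox_{rg}$ maps a neighborhood of $\oz+r\olm$ into a neighborhood of $\oz$ with graph staying in the good region, which follows from nonexpansiveness of $\prox_{rg}$ and the identity $x=z+r\lm$ with $(z,\lm)\in\gph\sub g$. Once the semiderivative is a constant linear map on a neighborhood, the standard fact (a locally semidifferentiable map with continuous semiderivative is $\mathcal{C}^1$, cf.\ \cite[Theorem~2D.1 and its consequences]{DoR14} or a direct Hadamard-differentiability argument) closes the proof.
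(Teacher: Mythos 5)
Your proposal is correct, but it reaches the equivalences by a genuinely different route than the paper. The paper first tilts the function, setting $\ph=g-\la\olm,\cdot\ra$, and then invokes the external Poliquin--Rockafellar result \cite[Theorem~4.4]{pr2} (continuous differentiability of $\prox_{r\ph}$ near $\oz$ is equivalent to strict twice epi-differentiability of $\ph$ at all nearby graph points), which is then converted into the relative interior condition through Theorem~\ref{riste}; the same scheme, with the identity $e_{r}\ph(z)=e_{r}g(z+r\olm)-\la \olm,z\ra-\frac{r}{2}\|\olm\|^2$, handles (b). You instead bypass the prox-regularity machinery entirely: you get (b)$\Leftrightarrow$(c) from $\nabla(e_rg)=r^{-1}(I-\prox_{rg})$, and you prove (a)$\Rightarrow$(c) directly from semidifferentiability of $\prox_{rg}$, the inverse formula $D(\prox_{rg})(x')=\big(I+rD(\sub g)(z',\lm')\big)^{-1}=P_{K_g(z',\lm')}$, and the constancy of critical cones from Corollary~\ref{incri}(b), with the nonexpansiveness estimate guaranteeing $(z',\lm')$ stays near $(\oz,\olm)$ --- which is exactly the estimate the paper uses, but only in its computation of $\nabla(\prox_{rg})$ and $\nabla^2(e_rg)$, where your argument and the paper's coincide. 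What your route buys is self-containment (no appeal to \cite{pr2}) and a direct explanation of why the derivative is the constant map $P_{\Kg}$; what the paper's route buys is that the same one-line reduction settles (a)$\Leftrightarrow$(b) and (a)$\Leftrightarrow$(c) simultaneously and connects the theorem to the strict twice epi-differentiability theme of Section~4. Two small repairs to your write-up: in (c)$\Rightarrow$(a) the symmetry of the Jacobian is never used and can be dropped, and the phrase that linearity ``forces $N_{\Kg}$ to be linear-valued'' should be replaced by the direct observation that differentiability of $\prox_{rg}$ at $\oz+r\olm$ makes its graphical derivative $P_{\Kg}$ a linear map, whence $-w=P_{\Kg}(-w)\in\Kg$ for every $w\in\Kg$, so $\Kg$ is a subspace and Proposition~\ref{nonde1} yields $\olm\in\ri\sub g(\oz)$.
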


\begin{proof} Set $\ph(z)=g(z)-\la \olm,z\ra$ for any $z\in \R^m$. Since $\olm\in \sub g(\oz)$, we get $0\in \sub \ph(\oz)$. This, together with the convexity of $\ph$, yields 
$\oz\in \mbox{argmin}\,  \ph$, where $\mbox{argmin}\,  \ph$ stands for the set of global minimizers of $\ph$ over $\R^m$. 
Observe also that the condition $\olm\in \ri \sub g(\oz)$ is equivalent to $0\in \ri \sub \ph (\oz)$
and that 
\begin{equation}\label{proxch}
\prox_{r\ph}(z)=\prox_{rg}(z+r\olm)\quad \mbox{for all}\;\; z\in \R^m.
\end{equation}
 Picking  $r>0$ and employing \cite[Theorem~4.4]{pr2} imply that $\prox_{r\ph}$ is ${\cal C}^1$ in a neighborhood of  $\oz$ if and only if $\ph$ is strictly twice epi-differentiable at $z$ for $v$   
for all $(z,v)\in \gph \sub \ph$   sufficiently close to $(\oz,0)$ (the parameter $r$ in \cite[Theorem~4.4]{pr2} should be chosen sufficiently small
since the function under consideration in \cite{pr2} is prox-regular. It is, however, well-known that such a restriction on $r$ for convex functions is not necessary).
By Theorem~\ref{riste}, the latter property of $\ph$ amounts to the condition $0\in \ri \sub \ph(\oz)$. 
It is not hard to see   that $\ph$ is strictly twice epi-differentiable at $z$ for $v$  
for  any $(z,v) \in \gph \sub \ph$   sufficiently close to $(\oz,0)$ if and only if $g$ enjoys the same property at $z$ for $\lm$ for any $(z,\lm)\in \sub g$  sufficiently close to $(\oz,\olm)$.
Combining these with \eqref{proxch} and Theorem~\ref{riste}, we conclude the equivalence of (a) and (c). To obtain the equivalence of (a) and (b), 
one can  see that 
$$
e_{r}\ph(z)=e_{r}g(z+r\olm)-\la \olm,z\ra-\frac{r}{2}\|\olm\|^2\quad \mbox{for all}\;\; z\in \R^m.
$$
This, combined with a similar argument via \cite[Theorem~4.4]{pr2}, confirms that (a) and (b) are equivalent. 

Finally, pick $r>0$ and suppose  that $\olm\in \ri \sub g(\oz)$. By (b) and (c), there exists a neighborhood  $U$ of $\oz+r\olm$ such 
that $\prox_{rg}$ is ${\cal C}^1$ and $e_r g$ is  ${\cal C}^2$ on $U$. 
 Take  $x\in U$   and set $y=\prox_{rg}(x)$.  
By \eqref{prsub}, we get $v:=r^{-1}(x-y)\in \sub g(y)$. It also follows from \eqref{prsub} and the definition of graphical derivative  that 
\begin{equation*}
D (\prox_{rg})(x)(w)=\big(I+rD(\sub g)(y,v)\big)^{-1}(w)\quad \mbox{for all}\;\; w\in \R^m,
\end{equation*}
which, together with \eqref{proto4}, brings us to 
$$
D (\prox_{rg})(x)(w)= \big(I+r N_{K_g(y,v)}\big)^{-1}(w)=\big(I+N_{K_g(y,v)}\big)^{-1}(w)=P_{K_g(y,v)}(w)
$$
for any $w\in \R^m$. By (c), the proximal mapping $\prox_{rg}$ is differentiable at $x$ and thus we get $D (\prox_{rg})(x)=\nabla (\prox_{rg})(x)$.
Combining these confirms the claimed formula for the Jacobian matrix $\nabla (\prox_{rg})(x)$. Recall also from \cite[Theorem~2.26]{rw} that 
$\nabla(e_rg)(x)=r^{-1}(x-\prox_{rg}(x))$ for any $x\in \R^m$. By (b), the envelope function $e_rg$ is twice differentiable at $x$ and thus we have 
$$
  \nabla^2(e_{r}g)(x)=\frac{1}{r}(I-\nabla (\prox_{rg})(x)\big)=\frac{1}{r}\big(I-P_{K_g(y,v)}\big)=\frac{1}{r} P_{K_{g^*}(v,y)},
$$
where the last equality comes from the identity $P_{K_g(y,v)}+P_{K_g(y,v)^*}=I$ together with \eqref{cricone2}. 
Now, we claim that $K_g(y,v)=\Kg$ whenever $x\in U$. This can be accomplished via Theorem~\ref{incri}(b) provided that 
we show $(y,v)\in \gph \sub g$ is sufficiently close to $(\oz,\olm)$. Since the proximal mapping is nonexpansive and since $\prox_{rg}(\oz+r\olm)=\oz$, we get 
$$
\|y-\oz\|=\| \prox_{rg}(x) -\prox_{rg}(\oz+r\olm)\|\le \|x-\oz-r\olm\|.
$$
Moreover, we have 
$$
\|v-\olm\|=\| r^{-1}(x-y)-\olm\| \le r^{-1}\|x-\oz-r\olm\|+r^{-1}\| y-\oz\|\le 2r^{-1}\|x-\oz-r\olm\|.
$$
Using these estimates and shrinking $U$, if necessary,  confirm our claim and hence prove the claimed formulas for the Jacobian matrix $\nabla (\prox_{rg})(x)$  and the Hessian matrix $\nabla^2(e_{r}g)(x)$. 
\end{proof}

We should  mention that smoothness of projection mapping onto a closed convex set was first studied by Holmes in \cite{hol} in Hilbert spaces.   His main result, \cite[Theorem~2]{hol}, states 
that if $C\subset \R^d$ is a closed convex set, $x\in \R^d$,  the boundary of  $C$ is a ${\cal C}^2$ smooth manifold  (cf.  \cite[Example~6.8]{rw}) around $y=P_C(x)$, then the projection mapping $P_C$ is ${\cal C}^1$ in a neighborhood of the open normal ray 
$\{y+t(x-y)|\; t>0\}$. As pointed out by Hiriart-Urruty in \cite{hu}, when the projection point $y$ is a {\em corner point}, Holmes's result can not be utilized to study smoothness of the projection mapping
because the boundary of $C$ fails to be  a ${\cal C}^2$ smooth manifold around $y$. In contrast, Theorem~\ref{prodiff} goes beyond the projection mapping and provides a characterization of smoothness of the proximal mapping of a polyhedral function
via a verifiable condition. While our result is limited to polyhedral functions, our approach via second-order variational analysis opens a new door to study smoothness of   projection mappings of convex sets.
It is important to emphasize that our approach to characterize smoothness of proximal mappings
demonstrates that instead of expecting smoothness of the boundary of the convex set under consideration, we should look for  a second-order regularity condition, which seems to be the driving force 
for such a result.  
 \begin{Corollary} \label{prodiff3} Assume that $C\subset \R^m$ is a polyhedral convex set and $x\in \R^m$. Then $P_C$ is ${\cal C}^1$ in a neighborhood of  $x$ if and only if $x-z\in \ri N_C(z)$, where $z=P_C(x)$.
 \end{Corollary}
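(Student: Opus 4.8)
The plan is to deduce Corollary~\ref{prodiff3} from the results of this section applied to the indicator function $g:=\delta_C$. First I would record the routine facts that make this specialization legitimate: since $C$ is a polyhedral convex set, $\epi\delta_C=C\times[0,\infty)$ is polyhedral convex, so $\delta_C$ is a polyhedral function; $\sub\delta_C(z)=N_C(z)$ for every $z\in C$; and $\prox_{r\delta_C}=P_C$ for \emph{every} $r>0$, because $\prox_{r\delta_C}(y)=\argmin_{w\in C}\tfrac1{2r}\|w-y\|^2=P_C(y)$ has no dependence on $r$. Now fix $x\in\R^m$, put $z:=P_C(x)$ and $\olm:=x-z$; the variational characterization of the projection onto a convex set gives $\olm=x-z\in N_C(z)=\sub\delta_C(z)$, so $(z,\olm)\in\gph\sub\delta_C$. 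Note that the asserted condition $x-z\in\ri N_C(z)$ is precisely $\olm\in\ri\sub\delta_C(z)$, and that with $r=1$ one has $\oz+r\olm=z+(x-z)=x$, so ``a neighborhood of $\oz+r\olm$'' in Theorem~\ref{prodiff} becomes ``a neighborhood of $x$'' and ``$\prox_{rg}$'' becomes ``$P_C$''.

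With these identifications, the equivalence (a)$\Leftrightarrow$(c) of Theorem~\ref{prodiff}, read off at $r=1$, already yields the forward implication: $x-z\in\ri N_C(z)$ forces $P_C$ to be ${\cal C}^1$ near $x$. For a clean argument covering both directions at once I would instead use the reduction lemma directly. Applying Theorem~\ref{thm:reduction} to $\delta_C$ at $(z,\olm)\in\gph N_C$, the graph of $N_C$ coincides near $(z,\olm)$ with the graph of $N_{K}$, where $K:=K_{\delta_C}(z,\olm)=N_{N_C(z)}(\olm)$ is the critical cone (using \eqref{cricone1}). Since $P_C=(I+N_C)^{-1}$, a short computation turns this graph identity into the pointwise formula $P_C(y)=z+P_{K}(y-x)$ for all $y$ in a neighborhood of $x$. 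Consequently $P_C$ is ${\cal C}^1$ near $x$ if and only if the positively homogeneous nonexpansive map $P_{K}$ is differentiable at $0$, which holds if and only if $P_{K}$ is linear, i.e.\ if and only if $K$ is a linear subspace. By Proposition~\ref{nonde1} (applied to $g=\delta_C$), the latter is exactly $\olm\in\ri\sub\delta_C(z)$, that is $x-z\in\ri N_C(z)$.

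The only obstacle worth naming is the bookkeeping in this last argument: translating the reduction lemma's graph identity into the formula $P_C(y)=z+P_{K}(y-x)$ near $x$ via the resolvent $P_C=(I+N_C)^{-1}$, and the elementary fact that a positively homogeneous map differentiable at the origin must be linear. Neither is difficult. If desired, the same computation records the explicit Jacobian $\nabla P_C(y)=P_{K}=P_{N_{N_C(z)}(x-z)}$ valid throughout a neighborhood of $x$, in line with the last display of Theorem~\ref{prodiff}.
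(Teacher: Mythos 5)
Your proposal is correct, but it takes a different route from the paper. The paper's proof is literally one line: apply Theorem~\ref{prodiff} with $g=\dd_C$, $\oz=z$, $\olm=x-z$, using $\prox_{r\dd_C}=P_C$ and $\oz+r\olm=x$ at $r=1$ (this is exactly the specialization you spell out in your opening paragraph, and which you use for the forward implication). Your main argument instead bypasses Theorem~\ref{prodiff} altogether: from the reduction lemma (Theorem~\ref{thm:reduction}) applied to $\dd_C$ at $(z,x-z)$, together with $P_C=(I+N_C)^{-1}$ and nonexpansiveness, you derive the local representation $P_C(y)=z+P_K(y-x)$ with $K=N_{N_C(z)}(x-z)$, and then reduce everything to the elementary facts that a positively homogeneous map differentiable at the origin is linear and that $P_K$ is linear precisely when $K$ is a subspace, finishing with Proposition~\ref{nonde1}. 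This checks out (the passage from the graph identity to the pointwise formula is exactly as you say, since both sides are single-valued and $y$ near $x$ keeps $(P_C(y)-z,\,y-P_C(y)-(x-z))$ inside the reduction ball). What each approach buys: the paper's route is immediate given the heavy Theorem~\ref{prodiff}, which itself leans on strict twice epi-differentiability and the external result of Poliquin--Rockafellar; your route is self-contained at the level of Section~3, sidesteps the quantifier ``for any $r>0$'' in Theorem~\ref{prodiff}(c) (which, as you correctly note, makes only the forward implication directly quotable at $r=1$; the reverse needs the per-$r$ equivalence inside that theorem's proof), and in fact proves slightly more: differentiability of $P_C$ at the single point $x$ already forces $x-z\in\ri N_C(z)$, which matches the paper's subsequent remark comparing with \cite[Corollary~4.1.2]{fp}. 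Your closing Jacobian formula $\nabla P_C=P_K$ near $x$ is likewise consistent with the last display of Theorem~\ref{prodiff}.
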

 \begin{proof} Applying  Theorem~\ref{prodiff} to the polyhedral function $g=\dd_C$ proves the claimed equivalence. 
 \end{proof}

Note that  a characterization of differentiability at a point, but not continuous differentiability {\em around} a point,    of the projection mapping $P_C$, $C$ being a polyhedral convex set, 
via the same relative interior condition as in Corollary~\ref{prodiff3}  can be found in \cite[Corollary~4.1.2]{fp}. Not only is our proof different from the one in \cite{fp}, also 
 Corollary~\ref{prodiff3} improves the latter result by showing  that   differentiability of the projection mapping at a given point can be   strengthened to the ${\cal C}^1$ property of this mapping in a neighborhood of that point.

\section{ Regularity Properties of Variational Systems}\label{sec05}
In this section, we aim to explore the relationship between important regularity properties of the solution mapping to 
the generalized equation \eqref{vs1}.  When $g=\dd_C$ with $C$ being a polyhedral convex set, the generalized 
equation \eqref{vs1} will be an example of  variational inequalities. In this case, the seminal paper \cite{dr96} revealed for the first time that strong metric regularity 
and metric regularity of the solution mapping to the canonical perturbation of \eqref{vs1} are equivalent; see below for the definitions of both concepts. 
We aim to present   a similar result for the generalized equation \eqref{vs1}, which is based upon   the reduction lemma for polyhedral functions.
Furthermore, we show that for  nondegenerate solutions to \eqref{vs1}, meaning solutions that satisfy the condition \eqref{mr1}, one can find conditions under which 
the solution mapping to the generalized equation \eqref{vs1} is continuously differentiable.

To  explore regularity properties of \eqref{vs1}, define the set-valued mapping $G:\R^m\tto\R^m$ by 
\begin{equation}\label{mr3}
G(x):= \psi(x)+\partial g(x), \quad x\in \R^m,
\end{equation}
and then consider the solution mapping $S: \R^m \tto \R^m$ to the canonical perturbation of the generalized equation \eqref{vs1}  by
\begin{equation}\label{mr33}
S(y):= G^{-1}(y)=\big\{x\in \R^m\, \big|\, y\in \psi(x)+\partial g(x)\big\}, \quad y\in \R^m.
\end{equation}

Recall that a set-valued mapping $F:\R^n \tto \R^m$ is called {\em metrically regular} at $\ox$ for $\oy\in F(\ox)$ if   there exist $\kappa \geq 0$  and  neighborhoods $U$ of $\ox$ and $V$ of $\oy$ such that the distance estimate
\begin{equation}\label{mr8}
\dist \big(x, F^{-1}(y)\big)\leq \kappa\, \dist\big(y, F(x)\big)
\end{equation}
holds for all $(x, y)\in U\times V$. 
The mapping is called {\em strongly metrically regular} at $\ox$ for $\oy$ if $F^{-1}$ admits a Lipschitz continuous single-valued localization around $\oy$ for $\ox$, which means that 
there exist neighborhoods $U$ of $\ox$ and $V$ of $\oy$ such that the mapping $y\mapsto F^{-1}(y)\cap U$ is single-valued and Lipschitz continuous on $V$. According to 
\cite[Proposition~3G.1]{DoR14}, strong metric regularity of $F$ at $\ox$ for $\oy$ amounts to 
  $F$ being metrically regular at $\ox$ for $\oy$ and its inverse $F^{-1}$ admitting a single-valued  localization around $\oy$ for $\ox$.

  \begin{Theorem}[equivalence between metric regularity and strong metric regularity]\label{thm:MR1}  
Assume that   $\ox$ is a   solution to the generalized equation \eqref{vs1}. 
Then  the mapping $G$, taken from \eqref{mr3}, is metrically regular at $\ox$ for $0$ if and only if it is strongly metrically regular    at $\ox$ for $0$.
\end{Theorem}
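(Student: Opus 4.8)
The plan is to reduce the equivalence to the already-established reduction lemma (Theorem~\ref{thm:reduction}) and the strict proto-differentiability machinery of Section~\ref{sec03}, together with the known equivalence for polyhedral \emph{cones} due to Dontchev and Rockafellar. Since strong metric regularity always implies metric regularity, only the forward implication requires work: assume $G$ is metrically regular at $\ox$ for $0$, and show $G^{-1}=S$ admits a single-valued localization around $0$ for $\ox$. The first step is to linearize the smooth part: write $\olm:=-\psi(\ox)\in\sub g(\ox)$, so $(\ox,\olm)\in\gph\sub g$, and split $G(x)=\psi(x)+\sub g(x)$. Because $\psi$ is $\mathcal C^1$, the partial linearization $x\mapsto \psi(\ox)+\nabla\psi(\ox)(x-\ox)+\sub g(x)$ is, by a standard perturbation result for metric regularity under $\mathcal C^1$ perturbations (e.g.\ \cite[Theorem~5F.1]{DoR14} or \cite[Theorem~3F.1]{DoR14}), metrically regular at $\ox$ for $0$ if and only if $G$ is, and likewise for strong metric regularity; so it suffices to treat the partially linearized inclusion.

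The second and central step is to invoke Theorem~\ref{thm:reduction}: near $(\ox,\olm)$, $\gph\sub g$ coincides with $(\ox,\olm)+\gph N_{K_g(\ox,\olm)}$. Hence, locally, the inclusion $0\in \psi(\ox)+\nabla\psi(\ox)(x-\ox)+\sub g(x)$ is equivalent, after the change of variables $w=x-\ox$, to the \emph{conic} variational inequality
\begin{equation*}
-\big(\nabla\psi(\ox)w\big)\in N_{K}(w),\qquad K:=K_g(\ox,\olm),
\end{equation*}
with the canonical perturbation $y\mapsto \nabla\psi(\ox)w+N_K(w)\ni y$. Metric regularity and strong metric regularity are both preserved under this local identification of graphs (they depend only on the graph of the mapping near the reference point, cf.\ \cite[Exercise~3F.3]{DoR14}). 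Now the reference mapping is the affine-plus-normal-cone mapping associated with the polyhedral convex cone $K$, which is exactly the setting of the classical Dontchev--Rockafellar theorem \cite[Theorem~2]{dr96} (see also \cite[Theorem~4H.3]{DoR14}): for such mappings over a polyhedral convex \emph{cone} (indeed any polyhedral convex set), metric regularity at the reference point is equivalent to strong metric regularity there. Applying that result yields strong metric regularity of the conic mapping, and unwinding the reductions gives strong metric regularity of $G$ at $\ox$ for $0$.

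I expect the main obstacle to be bookkeeping rather than a genuinely new idea: one must verify carefully that each reduction step—the $\mathcal C^1$ partial linearization and the local graph identification coming from Theorem~\ref{thm:reduction}—preserves \emph{both} metric regularity and the existence of a single-valued localization, so that neither direction of the eventual equivalence is lost. The linearization step needs the perturbation mapping $x\mapsto \psi(x)-\psi(\ox)-\nabla\psi(\ox)(x-\ox)$ to have vanishing Lipschitz modulus at $\ox$, which holds since $\psi\in\mathcal C^1$, and one should cite the appropriate stability theorem for (strong) metric regularity under such perturbations. The graph-identification step is cleaner: metric regularity, single-valuedness of a localization, and Lipschitz continuity of that localization are all intrinsic to the germ of the graph at the reference pair, so the equality in \eqref{reduction} transfers all three properties directly. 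Once these two reductions are in place, the conclusion is immediate from \cite{dr96}. An alternative, more self-contained route would replace the appeal to \cite{dr96} by the characterization of strong metric regularity via the strict graphical derivative (\cite[Theorem~4D.1]{DoR14}) combined with the computation of $D_*(\sub g)$ in Theorem~\ref{lem:proto1}(b); but invoking \cite{dr96} for the conic case is the shortest path and is the one I would take.
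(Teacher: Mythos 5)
Your proposal is correct and follows essentially the same route as the paper: reduce to the partial linearization of $\psi$ via the $\mathcal C^1$-perturbation stability of (strong) metric regularity, use Theorem~\ref{thm:reduction} to identify the graph locally with that of $w\mapsto \nabla\psi(\ox)w+N_{K_g(\ox,-\psi(\ox))}(w)$, and invoke the Dontchev--Rockafellar equivalence for affine-plus-normal-cone mappings over polyhedral sets. The only differences are cosmetic (citation choices within \cite{DoR14} and \cite{dr96}), so no further comment is needed.
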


\begin{proof} Note that one can argue via \cite[Corollary~3F.5]{DoR14} that (strong) metric regularity of the mapping $G$ from \eqref{mr3} at $\ox$ for $0$ is equivalent to that of the mapping $x\mapsto \psi(\ox)+\nabla\psi(\ox)(x-\ox)+\sub g(x)$ 
at $\ox$ for $0$. Employing then Theorem~\ref{thm:reduction} tells us that (strong) metric regularity of the latter amounts to the same property of the mapping $\Phi(w):= \nabla \psi(\ox)w+N_{\K}(w)$ with $\K= K_g(\ox, -\psi(\ox))$ and $w\in \R^m$
at $0$ for $0$. Since $\Phi$ fits into the framework of \cite{dr96}, one can conclude via \cite[Theorem~3]{dr96} (or  \cite[Corollary~9.7]{io}) that metric regularity and strong metric regularity are equivalent for $\Phi$ at $0$ for $0$, which implies that 
these properties are equivalent for $G$ at $\ox$ for $0$ and hence completes the proof. 
\end{proof}

We proceed with an application of Theorem~\ref{thm:MR1} in studying  regularity properties of  the solution mapping to the KKT system of the composite minimization problem
\begin{equation}\label{cp}
\mini \varphi(x)+\big(g\circ\Phi\big)(x), \quad\textrm{subject to} \; \; x\in \R^n,
\end{equation}
where $\varphi: \R^n \to \R$ and $\Phi: \R^n \to \R^m$ are $\C^2$ functions and $g:\R^m\to \oR$ is a polyhedral function. The KKT system associated with the composite problem \eqref{cp} is given by
\begin{equation}\label{kkt}
0=\nabla_xL(x, \lm), \quad \lm \in \partial g(\Phi(x)),
\end{equation}
where $L(x, \lm):= \varphi(x) +\la \lm, \Phi(x)\ra$ with  $(x, \lm)\in \R^n\times \R^m$ is the Lagrangian of \eqref{cp}. 
A pair $(\ox, \olm)$ is called a {\em KKT point} of \eqref{cp} provided that it satisfies  the KKT system \eqref{kkt}. Define the mapping $\Psi:\R^n\times\R^m \tto\R^n\times \R^m$ by
\begin{equation}\label{mr10}
\Psi(x, \lm):= \begin{bmatrix}
\nabla_xL(x, \lm)\\-\Phi(x)
\end{bmatrix} +\begin{bmatrix}
0\\\partial g^*(\lm)
\end{bmatrix}
\end{equation}
and observe that $(\ox, \olm)$ is a KKT point if and only if $(0, 0)\in \Psi(\ox, \olm)$. We aim at finding conditions under which the solution mapping to the canonical perturbed of the KKT system \eqref{kkt}, defined by
\begin{equation*}
S_{KKT}(p, q):= \Psi^{-1}(p, q)= \big\{(x, \lm) \in \R^n\times \R^m\, \big|\, (p,q) \in \Psi(x, \lm)\big\},
\end{equation*}
has a Lipschitz continuous single-valued localization. As shown below, this can be distilled from Theorem~\ref{thm:MR1}. 

\begin{Theorem}\label{mrkkt}
Let $(\ox, \olm)$ be a KKT point of \eqref{cp}. Then the following properties are equivalent:
\begin{itemize}[noitemsep,topsep=2pt]
\item [ \rm {(a)}] the mapping $\Psi$ is metrically regular at $(\ox, \olm)$ for $(0,0)$;
\item [ \rm {(b)}] the mapping $\Psi$ is strongly metrically regular at $(\ox, \olm)$ for $(0,0)$;
\item [ \rm {(c)}] the solution mapping $S_{KKT}$ has a Lipschitz continuous single-valued localization around $(0,0)$ for $(\ox, \olm)$;
 \item [ \rm {(d)}] the implication 
 $$
 \begin{cases}
 \nabla^2_{xx}L(\ox,\olm)w+\nabla\Phi(\ox)^*w'=0,\\
w'\in D^*(\sub g)( \Phi(\ox),\olm)(\nabla\Phi(\ox)w)
 \end{cases}
 \implies (w,w')=(0,0)
 $$
 holds.
 \end{itemize}
\end{Theorem}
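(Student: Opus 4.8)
The plan is to deduce Theorem~\ref{mrkkt} from Theorem~\ref{thm:MR1} by recognizing that the mapping $\Psi$ in \eqref{mr10} is precisely a generalized equation of the form \eqref{vs1} associated with a suitable polyhedral function on the product space. First I would set $z=(x,\lm)\in\R^n\times\R^m$ and introduce the $\C^1$ mapping $\widetilde\psi(z):=\big(\nabla_xL(x,\lm),-\Phi(x)\big)$ together with the separable polyhedral function $\widetilde g(x,\lm):= 0\cdot\dd_{\R^n}(x)+g^*(\lm)$, whose epigraph is polyhedral because $g^*$ is a polyhedral function by \cite[Theorem~11.14(a)]{rw}. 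Then $\sub\widetilde g(x,\lm)=\{0\}\times\sub g^*(\lm)$, so $(0,0)\in\Psi(\ox,\olm)$ is exactly the generalized equation $0\in\widetilde\psi(z)+\sub\widetilde g(z)$ at $\oz=(\ox,\olm)$. Applying Theorem~\ref{thm:MR1} to this data immediately yields the equivalence of (a) and (b). The equivalence of (b) and (c) is just the definition of strong metric regularity (a Lipschitzian single-valued localization of the inverse), recorded after \eqref{mr8}, applied to $F=\Psi$, $\ox=(\ox,\olm)$, $\oy=(0,0)$; here $\Psi^{-1}=S_{KKT}$ by construction.

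The substantive part is the equivalence of (d) with the regularity properties. The standard route is via the Mordukhovich criterion for metric regularity \cite[Theorem~9.43]{rw}: $\Psi$ is metrically regular at $(\ox,\olm)$ for $(0,0)$ if and only if the only solution of $(0,0)\in D^*\Psi\big((\ox,\olm)\,|\,(0,0)\big)(w,w')$ is $(w,w')=(0,0)$. So I would compute this coderivative. Using the sum rule for coderivatives of the sum of a $\C^1$ map and a set-valued map \cite[Exercise~10.43]{rw}, one gets
\begin{equation*}
D^*\Psi\big((\ox,\olm)|(0,0)\big)(w,w')=\nabla\widetilde\psi(\ox,\olm)^{*}(w,w')+\big(\{0\}\times D^*(\sub g^*)(\olm,\oz_2)(\,\cdot\,)\big),
\end{equation*}
where $\oz_2$ is the relevant subgradient; a block computation of $\nabla\widetilde\psi(\ox,\olm)^*$ — noting $\nabla^2_{xx}L$, $\nabla_{x\lm}L=\nabla\Phi(\ox)^*$, and the $-\Phi$ block contributes $-\nabla\Phi(\ox)$ — turns the criterion into: the system $\nabla^2_{xx}L(\ox,\olm)w-\nabla\Phi(\ox)^*u=0$ and $-\nabla\Phi(\ox)w\in D^*(\sub g^*)(\olm,\Phi(\ox))(u)$ (together with the paired equation) forces $(w,u)=0$. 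Finally I would convert the coderivative of $\sub g^*$ into that of $\sub g$ using the inverse relation $\sub g^*=(\sub g)^{-1}$ and the coderivative-of-an-inverse identity, i.e. $v'\in D^*(\sub g^*)(\olm,\Phi(\ox))(u)\iff -u\in D^*(\sub g)(\Phi(\ox),\olm)(-v')$, after which a sign relabelling $w'=-u$ (or $w'=u$) delivers exactly the implication displayed in (d). This shows (a)$\iff$(d).

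I expect the main obstacle to be the coderivative bookkeeping: getting the block structure of $\nabla\widetilde\psi(\ox,\olm)^*$ right, handling the asymmetry introduced by the $-\Phi$ component, and correctly propagating signs through the inversion $\sub g^*=(\sub g)^{-1}$ so that the final condition matches (d) verbatim rather than up to an unintended sign. A cleaner alternative that sidesteps some of this is to invoke Theorem~\ref{thm:reduction} first: reduce $\Psi$ near $(\ox,\olm)$ to the piecewise polyhedral mapping $(w,u)\mapsto \nabla\widetilde\psi(\ox,\olm)(w,u)+N_{\K}(w,u)$ with $\K=K_{\widetilde g}\big((\ox,\olm),-\widetilde\psi(\ox,\olm)\big)$, and then apply the characterization of strong metric regularity of such mappings (e.g. the critical-face / coherently oriented condition of \cite{dr96}, or directly \cite[Theorem~4D.1]{DoR14} together with Theorem~\ref{lem:proto1}), which under the relative-interior structure of the problem collapses to the nonsingularity statement in (d). Either way, once (a)$\iff$(d) is in hand, the chain (a)$\iff$(b)$\iff$(c) from the first paragraph completes the proof.
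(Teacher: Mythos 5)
Your proposal is correct and follows essentially the same route as the paper: recast the KKT system as the generalized equation $0\in\psi(x,\lm)+\partial\hat g(x,\lm)$ with the polyhedral function $\hat g(x,\lm)=g^*(\lm)$, invoke Theorem~\ref{thm:MR1} for the equivalence of (a)--(c), and obtain (d) from the coderivative (Mordukhovich) criterion together with the block computation of $\nabla\psi(\ox,\olm)^*$ and the inverse relation converting $D^*(\sub g^*)(\olm,\Phi(\ox))$ into $D^*(\sub g)(\Phi(\ox),\olm)$ with the sign relabelling you describe. The only differences are cosmetic (the paper cites \cite[Theorem~9.40]{rw} for the criterion, and your alternative reduction-lemma route is not needed).
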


\begin{proof}Set 
\begin{equation*}
\psi(x, \lm): = \begin{bmatrix}
\nabla_xL(x, \lm)\\
-\Phi(x)
\end{bmatrix} \quad\textrm{ and }\quad \hat g(x, \lm) := g^*(\lm),\;\; (x,\lm)\in \R^n\times \R^m,
\end{equation*}
and observe that   the KKT system \eqref{kkt} can be written as the generalized equation 
\begin{equation}\label{gekkt}
(0,0)\in \Psi(x, \lm) = \psi(x, \lm) +\partial \hat g(x, \lm).
\end{equation}
 It follows from  \cite[Theorem~11.14(a)]{rw} that $g^*$ is a polyhedral function and so is $\hat g$.
 Also, we deduce from \cite[Proposition~10.5]{rw} that 
 $\partial \hat g (x, \lm) = \{0\}\times \partial g^*(\lm)$. 
 The equivalence of (a)-(c) comes directly from Theorem~\ref{thm:MR1}. Part (d) is 
 a direct consequence of the coderivative criterion from \cite[Theorem~9.40]{rw} for metric regularity of the mapping $\Psi$   at $(\ox, \olm)$ for $(0,0)$, 
 which translates as the implication 
 $$
 \begin{bmatrix}0\\0\end{bmatrix} \in D^*\Psi \big((\ox,\olm),  (0,0)\big)(w,w') =\left[\begin{array}{c}
\nabla_{xx}^2 L(\ox,\olm)w -\nabla\Phi(\ox)^*w'\\
\nabla\Phi(\ox)w + D^*(\sub g^*)(\olm, \Phi(\ox))(w')\\
\end{array}
\right]\implies  (w,w')=(0,0).
 $$
The second inclusion in this implication amounts to $-w'\in D^*(\sub g)( \Phi(\ox),\olm)(\nabla\Phi(\ox)w)$. 
 Combining this with the above implication  confirms that (d) and (a) are equivalent and hence completes the proof.
\end{proof}

For   classical nonlinear programming problems (NLPs), it is well-known that metric regularity and strong metric regularity of KKT systems are equivalent; see \cite[Theorem~4I.2]{DoR14} and \cite[Section~7.5]{kk}.
 Theorem~\ref{mrkkt} extends this result for the composite problem \eqref{cp}. 

  According to Theorem~\ref{thm:MR1}, if $\ox$ is a solution to \eqref{vs1} and if  the mapping $G$ in \eqref{mr3} is metrically regular at $\ox$ for $0$, then the solution mapping $S$ in \eqref{mr33}
  has a  Lipschitz continuous single-valued localization around $0$ for $\ox$. 
  Our final  goal in  this section is to show that if, in addition, $\ox$ is a nondegenerate solution to \eqref{vs1} which means it satisfies \eqref{mr1}, then 
  the aforementioned localization of $S$ is continuously differentiable around $0$. To this end, we begin with
    showing that strict proto-differentiability is preserved for the sum of two functions.

\begin{Proposition}[sum rule for strict proto-derivative]\label{prop:sum} Let $f:\R^n \to \R^m$ be ${\cal C}^1$ around $\ox$ and let $F:\R^n\tto\R^m$ with $\oy \in F(\ox)$. Then we have
\begin{equation*}
D_*(f+F)(\ox, f(\ox)+\oy)(w) = \nabla f(\ox)w + D_*F(\ox, \oy)(w)\quad \mbox{for all}\;\;w\in \R^n.
\end{equation*}
Moreover,   $F$ is strictly proto-differentiable at $\ox$ for $\oy$ if and only if $f+F$ is strictly proto-differentiable at $\ox$ for $f(\ox)+\oy$.
\end{Proposition}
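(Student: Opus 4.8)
The plan is to work directly at the level of graphs and use the characterization of strict proto-differentiability via the coincidence of the regular (Clarke) tangent cone and the paratingent cone, as recalled in the text around \eqref{tancc}. The key observation is that the map $(x,y)\mapsto(x, y+f(x))$ is a $\C^1$ diffeomorphism of $\R^n\times\R^m$ near $(\ox,\oy)$ (its inverse is $(x,y)\mapsto(x,y-f(x))$), and it carries $\gph F$ onto $\gph(f+F)$, sending $(\ox,\oy)$ to $(\ox,f(\ox)+\oy)$.

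First I would establish the formula for $D_*(f+F)$. By definition, $\gph D_*(f+F)(\ox,f(\ox)+\oy)=\widetilde T_{\gph(f+F)}(\ox,f(\ox)+\oy)$, and likewise $\gph D_*F(\ox,\oy)=\widetilde T_{\gph F}(\ox,\oy)$. Since tangent-cone-type objects (including the paratingent cone) transform covariantly under a $\C^1$ diffeomorphism $\Theta$ via its Jacobian --- $\widetilde T_{\Theta(\Omega)}(\Theta(p)) = \nabla\Theta(p)\,\widetilde T_{\Omega}(p)$ --- applying this with $\Theta(x,y)=(x,y+f(x))$, whose Jacobian at $(\ox,\oy)$ is $\begin{bmatrix}I & 0\\ \nabla f(\ox) & I\end{bmatrix}$, gives
\[
(w,u)\in \gph D_*(f+F)(\ox,f(\ox)+\oy)\iff (w,\,u-\nabla f(\ox)w)\in\gph D_*F(\ox,\oy),
\]
which is exactly $D_*(f+F)(\ox,f(\ox)+\oy)(w)=\nabla f(\ox)w+D_*F(\ox,\oy)(w)$. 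The same diffeomorphism argument applied simultaneously to the regular tangent cone $\widehat T$ shows $\widehat T_{\gph(f+F)}(\ox,f(\ox)+\oy)=\nabla\Theta(\ox,\oy)\,\widehat T_{\gph F}(\ox,\oy)$; since $\nabla\Theta(\ox,\oy)$ is invertible, the equality $\widehat T_{\gph F}(\ox,\oy)=\widetilde T_{\gph F}(\ox,\oy)$ holds if and only if $\widehat T_{\gph(f+F)}(\ox,f(\ox)+\oy)=\widetilde T_{\gph(f+F)}(\ox,f(\ox)+\oy)$, i.e.\ $F$ is strictly proto-differentiable at $\ox$ for $\oy$ iff $f+F$ is strictly proto-differentiable at $\ox$ for $f(\ox)+\oy$. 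Applying the already-proved formula to a strictly proto-differentiable $F$ then also yields $D_*(f+F)=D(f+F)$ via \eqref{eqspd}.

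The main subtlety --- the step I would be most careful about --- is the covariance of the paratingent cone under a $\C^1$ change of variables when $f$ is only $\C^1$ (not $\C^2$ or smooth): one must check that first-order Taylor expansion of $f$ is enough to transfer the limsup defining $\widetilde T$. Concretely, if $(x_k,y_k)\toset{\gph F}(\ox,\oy)$, $t_k\searrow 0$, and $\frac{1}{t_k}\big((\xi_k,\eta_k)-(x_k,y_k)\big)\to(w,u)$ with $(\xi_k,\eta_k)\in\gph F$, then writing the corresponding points in $\gph(f+F)$ and expanding $f(\xi_k)-f(x_k)=\nabla f(x_k)(\xi_k-x_k)+o(\|\xi_k-x_k\|)=\nabla f(\ox)(\xi_k-x_k)+o(t_k)$ (using continuity of $\nabla f$ and $\|\xi_k-x_k\|=O(t_k)$) shows the difference quotient converges to $(w,u+\nabla f(\ox)w)$; the reverse direction is symmetric using $-f$. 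This elementary estimate is exactly what makes the diffeomorphism argument rigorous, and once it is in place the proposition follows with no further work.
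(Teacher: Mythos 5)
Your proposal is correct, and it reaches the conclusion by a somewhat different route than the paper. The paper quotes the sum formula for the strict graphical derivative from \cite[Exercise~10.43(b)]{rw} and then proves the equivalence by a direct sequential verification of the full-limit definition in \eqref{sproto}: given $(w,u)$ in $\gph D_*(f+F)(\ox,f(\ox)+\oy)$ and arbitrary $t_k\searrow 0$, $(x^k,z^k)\to(\ox,f(\ox)+\oy)$ in $\gph(f+F)$, it shifts to $y^k=z^k-f(x^k)$, invokes strict proto-differentiability of $F$ to get approximating pairs, and adds back the difference quotient $\big(f(x^k+t_kw^k)-f(x^k)\big)/t_k\to\nabla f(\ox)w$; the converse is obtained by writing $F=(f+F)+(-f)$. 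You instead package everything into one invariance principle: the graph map $\Theta(x,y)=(x,y+f(x))$ is a $\C^1$ diffeomorphism carrying $\gph F$ onto $\gph(f+F)$, and both the paratingent cone $\widetilde T$ and the regular tangent cone $\widehat T$ of \eqref{tancc} transform covariantly under $\Theta$ via the invertible Jacobian $\begin{bmatrix}I&0\\ \nabla f(\ox)&I\end{bmatrix}$; this yields the $D_*$ formula (rather than citing it) and the equivalence simultaneously, using the characterization of strict proto-differentiability as $\widehat T_{\gph F}=\widetilde T_{\gph F}$ stated before \eqref{eqspd}. At bottom the analytic content is the same first-order Taylor estimate along graph sequences with $\|\xi_k-x_k\|=O(t_k)$, which you correctly identify as the only delicate step; the covariance of $\widehat T$ (the liminf cone) additionally requires pulling back an arbitrary sequence in $\gph(f+F)$ through $\Theta^{-1}$, which you only gesture at but which is immediate since $\Theta$ is a local bijection between the graphs. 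Your organization is slightly more self-contained and makes the converse automatic from invertibility of $\nabla\Theta$, at the cost of stating and verifying the cone-covariance lemma; the paper's version is shorter because it leans on the Rockafellar--Wets exercise for the formula.
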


\begin{proof} The given formula for $D_*(f+F)(\ox, f(\ox)+\oy)$ was already established in \cite[Exercise~10.43(b)]{rw}. To justify the second claim, 
suppose that $F$ is strictly proto-differentiable at $\ox$ for $\oy$. Let   $(w, u)\in \gph D_*(f+F)(\ox, \oy+f(\ox))$ and take arbitrary sequences  $t_k\searrow 0$, and $(\xk, \zk)\to(\ox, \oy+f(\ox))$ with $\{(x^k,z^k)\b\subset \gph (f+F)$.  
The latter tells us that $(\xk, \yk)\to(\ox, \oy )$ with $y^k:=\zk-f(\xk)$. 
By the sum rule for the strict graphical derivative, we get $(w,u- \nabla f(\ox)w)\in \gph D_*F(\ox, \oy)$. Since $F$   is strictly proto-differentiable at $\ox$ for $\oy$, we find a sequence $(\wk, v^k) \to (w, u-\nabla f(\ox)w)$
such that $\yk+t_k v^k\in F(\xk+t_k\wk)$ for all $k\in \N$, which in turn implies for any $k\in \N$ that 
$$
 \zk+t_k u^k\in (f+F)(\xk+t_k\wk)\quad \mbox{with}\;\;u^k:=\frac{f(\xk+t_k\wk)-f(\xk)}{t_k}+ v^k.
$$
Since $\{(w^k,u^k)\b$ converges to $(w,u)$, we conclude that $f+F$   is strictly proto-differentiable at $\ox$ for $f(\ox)+\oy$.

Assume now that $f+F$ is strictly proto-differentiable at $\ox$ for $f(\ox)+\oy$. By the argument above, we can conclude that $F=f+F+(-f)$ is strictly proto-differentiable at $\ox$ for $\oy$,
which completes the proof. 
\end{proof}

Recall from \cite[page~173]{r85} that a set $C\subset \R^d$ is called {\em smooth} at $\ox\in C$ if the tangent cone $T_C(\ox)$ is a linear subspace of $\R^d$ and 
 the ``$\limsup$" in \eqref{tan1} is the ``$\lim$." It is called {\em strictly smooth} at $\ox$ if 
 $
 \widehat T_{C}(\ox) = \widetilde T_{C}(\ox),
 $
 where both tangent cones were defined in \eqref{tancc}.
 It follows from  \cite[Proposition~3.1]{r85} that if $f:\R^n\to \R^m$ is Lipschitz continuous around $\ox\in \R^n$, then $\gph f$ is (strictly) smooth at $(\ox,f(\ox))$ if and only if $f$ is (strictly) differentiable at $\ox$.  
\begin{Theorem}\label{slcl}
Assume that   $\ox$ is a nondegenerate solution to the generalized equation \eqref{vs1}. 
Then the solution mapping $S$ to    \eqref{vs1} has a Lipschitz continuous single-valued localization $\sigma$ around $0\in \R^m$ for $\ox$ if and only if one of the equivalent conditions
\begin{itemize}[noitemsep,topsep=2pt]
\item [ \rm {(a)}]   $\big(\nabla\psi(\ox)\K\big) +\K^\perp = \R^m$;
\item [ \rm {(b)}]   $\{w\in \R^m|\; \nabla \psi(\ox)^*w\in \K^{\perp}\big\}\cap \K=\{0\}$,
\end{itemize}
holds, where $\K=K_g(\ox, -\psi(\ox))$. 
 In this case, the function $\sigma$ is ${\cal C}^1$ in a neighborhood of  $0$ and 
$$
\nabla \sigma(y)=B\big(B^*\nabla \psi(\sigma(y))B\big)^{-1}B^*
$$
for all $y$ sufficiently close to $0$, where $B\in \R^{m\times s}$ is a matrix whose columns form a basis for the linear subspace $\K$ with $s=\dim \K$.
\end{Theorem}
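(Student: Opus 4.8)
The plan is to reduce everything, via Theorem~\ref{thm:MR1} and the reduction lemma (Theorem~\ref{thm:reduction}), to the linear relation $\Phi(w)=\nabla\psi(\ox)w+N_\K(w)$ at $0$ for $0$, and then to invoke the reduction lemma a second time to rewrite the perturbed system as a smooth finite-dimensional equation on which the classical implicit function theorem applies. I would first dispose of the equivalence of (a) and (b): since $\ox$ is nondegenerate, \eqref{mr1} together with Proposition~\ref{nonde1} shows that $\K$ is a linear subspace, so $(\K^\perp)^\perp=\K$; taking orthogonal complements in (a) then turns it into $(\nabla\psi(\ox)\K)^\perp\cap\K=\{0\}$, and since $(\nabla\psi(\ox)\K)^\perp=\{w\in\R^m\,|\,\nabla\psi(\ox)^*w\in\K^\perp\}$ this is exactly (b).

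For the main equivalence, note that $S$ admits a Lipschitz continuous single-valued localization around $0$ for $\ox$ precisely when the mapping $G$ from \eqref{mr3} is strongly metrically regular at $\ox$ for $0$, which by Theorem~\ref{thm:MR1} is the same as metric regularity of $G$ at $\ox$ for $0$. Arguing exactly as in the proof of Theorem~\ref{thm:MR1} --- using \cite[Corollary~3F.5]{DoR14} to pass to the partial linearization $x\mapsto\psi(\ox)+\nabla\psi(\ox)(x-\ox)+\partial g(x)$ and then Theorem~\ref{thm:reduction} to replace $\partial g$ near $(\ox,-\psi(\ox))$ by $N_\K$ --- this reduces to metric regularity at $0$ for $0$ of $\Phi(w)=\nabla\psi(\ox)w+N_\K(w)$. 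Because $\K$ is a subspace, $\gph N_\K=\K\times\K^\perp$, so $\gph\Phi$ is a linear subspace of $\R^m\times\R^m$ and $\Phi$ is a linear relation. Testing the estimate \eqref{mr8} for $\Phi$ at $w=0$ against vectors $y$ outside $\K^\perp$ gives $\dist(0,\Phi^{-1}(y))\le\kappa\,\dist(y,\K^\perp)<\infty$, hence $\Phi^{-1}(y)\ne\emptyset$; since $\rge\Phi=\nabla\psi(\ox)\K+\K^\perp$ is closed, metric regularity of $\Phi$ forces $\nabla\psi(\ox)\K+\K^\perp=\R^m$, i.e. (a). The reverse implication (a)$\Rightarrow$``$S$ has such a localization'' I would extract from the construction of the next paragraph, which in fact produces a ${\cal C}^1$ --- hence Lipschitz --- single-valued localization.

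Finally, assume (a), equivalently (b), and let $B\in\R^{m\times s}$ have columns forming a basis of $\K$, so that $\K=\rge B$ and $\K^\perp=\ker B^*$. For $(x,y)$ near $(\ox,0)$ the pair $(x,y-\psi(x))$ lies near $(\ox,-\psi(\ox))\in\gph\partial g$, so Theorem~\ref{thm:reduction} together with $\gph N_\K=\K\times\K^\perp$ yields
\begin{equation*}
x\in S(y)\iff x-\ox\in\K\ \textrm{ and }\ B^*\big(y-\psi(x)+\psi(\ox)\big)=0.
\end{equation*}
Writing $x=\ox+Bt$ with $t\in\R^s$ near $0$ turns the right-hand side into $H(t,y)=0$, where $H(t,y):=B^*\big(y-\psi(\ox+Bt)+\psi(\ox)\big)$ is ${\cal C}^1$ near $(0,0)$ with $H(0,0)=0$, $\partial_yH\equiv B^*$, and $\partial_tH(0,0)=-B^*\nabla\psi(\ox)B$. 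Since $B^*\nabla\psi(\ox)B$ is a square $s\times s$ matrix, it is invertible if and only if its adjoint $B^*\nabla\psi(\ox)^*B$ is injective, and that injectivity is precisely (b); thus $\partial_tH(0,0)$ is invertible. The classical implicit function theorem then produces a ${\cal C}^1$ map $y\mapsto\tau(y)$ near $0$ with $\tau(0)=0$ and $H(\tau(y),y)\equiv0$; by uniqueness of small solutions of $H(\cdot,y)=0$ together with the displayed equivalence, $\sigma(y):=\ox+B\tau(y)$ is the single-valued localization of $S$ around $0$ for $\ox$, with $\sigma(0)=\ox$ and $\sigma\in{\cal C}^1$. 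Differentiating $H(\tau(y),y)=0$ and using $\partial_yH\equiv B^*$ gives $\nabla\tau(y)=\big(B^*\nabla\psi(\sigma(y))B\big)^{-1}B^*$, whence $\nabla\sigma(y)=B\big(B^*\nabla\psi(\sigma(y))B\big)^{-1}B^*$.

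The step I expect to be the crux is this last reduction: one must exploit nondegeneracy (through Proposition~\ref{nonde1}) to know that $\K$ is a subspace, so that $\gph N_\K$ is the \emph{linear} subspace $\K\times\K^\perp$, and then apply Theorem~\ref{thm:reduction}, with due care about the size of the neighborhoods involved, to bring membership in $S(y)$ into the smooth normal form $H(t,y)=0$. Once this is available, everything else is routine linear algebra and the classical implicit function theorem.
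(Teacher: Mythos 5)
Your proposal is correct, but it takes a genuinely different route from the paper in its second half. For the ``only if'' direction the paper translates condition (b) as the coderivative (Mordukhovich) criterion for metric regularity of $G$, using Corollary~\ref{codgh} and Theorem~\ref{lem:proto1}(a), and then invokes Theorem~\ref{thm:MR1}; you instead pass, exactly as in the proof of Theorem~\ref{thm:MR1}, to the reduced linear relation $\Phi(w)=\nabla\psi(\ox)w+N_\K(w)$ and read off surjectivity $\nabla\psi(\ox)\K+\K^\perp=\R^m$ from nonemptiness of $\Phi^{-1}(y)$ for small $y$ --- which is fine, though the operative reason is that $\rge\Phi$ is a linear subspace containing a neighborhood of $0$, not its closedness. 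The bigger difference is the smoothness part: the paper proves that $\sigma$ is ${\cal C}^1$ via strict proto-differentiability of $\sub g$ under nondegeneracy (Theorem~\ref{lem:proto1}(c)), the sum rule of Proposition~\ref{prop:sum}, Rockafellar's result that a Lipschitz map with strictly smooth graph is strictly differentiable, and then computes $\nabla\sigma$ through graphical derivatives, Corollary~\ref{incri}(b), and a separate nonsingularity argument for $B^*\nabla\psi(\ox)B$ based on condition (a). You instead use the reduction lemma together with $\gph N_\K=\K\times\K^\perp$ to rewrite $x\in S(y)$ near $(\ox,0)$ as the smooth equation $H(t,y)=B^*\big(y-\psi(\ox+Bt)+\psi(\ox)\big)=0$, check that $\partial_tH(0,0)=-B^*\nabla\psi(\ox)B$ is invertible precisely under (b), and apply the classical implicit function theorem; this single construction simultaneously yields existence of the localization, its ${\cal C}^1$ smoothness, and the Jacobian formula, bypassing the strict proto-differentiability machinery entirely. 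Your approach is more elementary and self-contained (it needs only Theorem~\ref{thm:reduction}, Proposition~\ref{nonde1}, Theorem~\ref{thm:MR1}, and the classical IFT), at the cost of being tied to the explicit polyhedral normal form; the paper's approach showcases the strict proto-differentiability theory developed in Sections~\ref{sec03}--\ref{sec4} and is the template that extends to settings where no such explicit smooth reduction is at hand. The only points worth tightening in your write-up are the routine neighborhood bookkeeping when invoking the reduction lemma and the implicit function theorem (to ensure the displayed equivalence and the uniqueness of small solutions hold on a common product neighborhood, so that $\sigma(y)=\ox+B\tau(y)$ really is $S(y)\cap U$), and the closedness remark noted above.
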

\begin{proof} The equivalence of the conditions (a) and (b) falls directly from  \cite[Corollary~11.25(c)]{rw}. Part (b) is also a translation of the coderivative criterion from \cite[Theorem~9.40]{rw}
for metric regularity of the mapping $G$ in \eqref{mr3} at $\ox$ for $0$. Indeed, we have  for any $w\in \R^m$ that 
\begin{equation*}
D^*G(\ox,0)(w)=\nabla \psi(\ox)^*w+D^*(\sub g)(\ox,-\psi(\ox))(w)=  \nabla \psi(\ox)^*w+ N_{\K}(w),
\end{equation*}
where the last equality results from Theorems~\ref{codgh} and \ref{lem:proto1}(a). This reads as $u\in D^*G(\ox,0)(w)$ if and only if 
$u\in \nabla \psi(\ox)^*w+  {\K^\bot}$ and $w\in \K$. Using this, one can see that (b) amounts to saying that $G$ is metrically regular at $\ox$ for $0$.
By Theorem~\ref{thm:MR1}, the latter is equivalent to strong metric regularity of $G$ at $\ox$ for $0$, a property that amounts to saying that 
 the solution mapping  $S$ in \eqref{mr33} has a  Lipschitz continuous single-valued localization  around $0$ for $\ox$. 
 {So we find neighborhoods $U$ of $\ox$ and $V$ of $0$ such that the mapping $y\mapsto S(y)\cap U$ is single-valued and Lipschitz continuous on $V$. Define the function $\sigma:V\to U$
by $\sigma(y)=S(y)\cap U$ for any $y\in V$.} We are going to demonstrate that $\sigma$ is ${\cal C}^1$ in a neighborhood of  $0$. To this end,  
it follows from the nondegeneracy condition \eqref{mr1} and Theorem~\ref{lem:proto1}(c) that $\sub g$ is strictly proto-differentiable at $x$ for $z$ whenever $(x,z)\in \gph \sub g$ is sufficiently close to $(\ox,-\psi(\ox))$,
which, together with Proposition~\ref{prop:sum}, 
tells us that $G$ is strictly proto-differentiable at $x$ for $y$ whenever $(x,y)\in \gph G$ is sufficiently close to $(\ox,0)$. 
Suppose without loss of generality that $G$ is strictly proto-differentiable at $x$ for $y$ whenever $(x,y)\in (U\times V)\cap \gph G$.
Choose a pair $(x,y)\in (U\times V)\cap \gph G$ and observe that $\sigma$ is strictly proto-differentiable at $y$ for $x$. This
implies that  $\gph \sigma$ is strictly smooth at $(y,x)$. Since  $\sigma$ is Lipschitz continuous on $V$,  it follows from  \cite[Proposition~3.1]{r85} 
that  $\sigma$   is strictly differentiable at $y$. This means that $\sigma$ is strictly differentiable on $V$, a property   equivalent to saying that $\sigma$ is ${\cal C}^1$ on $V$ (cf. \cite[Exercise~1D.8]{DoR14}).

Finally, to justify the claimed formula for the Jacobian matrix of $\sigma$, take $y\in V$. Thus, for any $u\in \R^n$ we have 
$w=\nabla \sigma(y)u=D\sigma(y)(u)$, which is equivalent to $u\in DG(x,y)(w)$, where $x=\sigma(y)$. By the definition of $G$ from \eqref{mr3}, the sum rule for the graphical derivative from \cite[Exercise~10.43(b)]{rw}, and Theorem~\ref{lem:proto1}(a), we 
obtain for any $(x,y)\in \gph G$ and $w\in \R^m$ that 
\begin{eqnarray}
DG(x,y)(w)=\nabla \psi(x)w+D(\sub g)(x,y-\psi(x))(w)=\nabla \psi(x)w+N_{K_g(x,y-\psi(x))}(w).\nonumber
\end{eqnarray}
Recall that  $\ox$ is a nondegenerate solution to \eqref{vs1}. Shrinking the neighborhoods $U$ and $V$, if necessary, we deduce from Corollary~\ref{incri}(b) that for any $(x,y)\in  (U\times V)\cap \gph G$, 
we have $K_g(x,y-\psi(x))=K_g(\ox, -\psi(\ox))=\K$. Combining these  with $\K$ being a linear subspace brings us to 
\begin{equation}\label{gder}
u\in DG(x,y)(w)\iff u\in \nabla \psi(x)w+\K^\perp, \;\; w\in \K.
\end{equation}
 By the definition of $B$, we have  $\K= \rge B$ , or equivalently,  $\K^\perp = \ker B^*$. Thus, 
we find  $q\in \R^s$ such that  $w=Bq$ and that  $ u - \nabla \psi(x) Bq \in \ker B^*$, or equivalently, $B^* u = B^*\nabla \psi(x) Bq$. 
We  claim that the  matrix $B^*\nabla \psi(x) B$ is nonsingular, which leads us to 
$$
\nabla \sigma(y)u=w=Bq=B\big(B^*\nabla \psi(x)B\big)^{-1}B^* u,
$$
and hence confirms the claimed formula for $\nabla \sigma(y)$.  
To justify our claim, pick $u\in \R^m$. We conclude from metric regularity of $G$ at $\ox$ for $0$ and and its equivalent description by condition (a)   
that  there exist $w\in \K$   and $w'\in \K^\perp$ such that $u=\nabla\psi(\ox)w+w'$.
Since $\K= \rge B$, we find  $q\in \R^s$ such that  $w=Bq$ and hence  $B^* u =B^*(\nabla\psi(\ox)Bq+w')= B^*\nabla \psi(\ox) Bq$. Since $u\in \R^m$ was taken arbitrary, the latter equality leads us 
\begin{equation*}
\rge\big(B^*\nabla \psi(\ox)B\big) = B^* \R^m = \rge B^* =\R^s,
\end{equation*}
where the last equality comes from $B$ having full column rank. This confirms  that $B^*\nabla \psi(\ox)B$ is  an $s\times s$ nonsingular matrix and completes the proof.
\end{proof}

Theorem~\ref{slcl} can be viewed as an extension of the classical inverse mapping theorem for generalized equations. This well-known result 
ensures under the nonsingularity of the Jacobian matrix that the inverse of a ${\cal C}^1$ function has a Lipschitz continuous single-valued localization, which is 
continuously differentiable. Robinson, in his landmark paper \cite{rob80}, showed that for generalized equations, one can   expect under appropriate conditions that 
their solution mappings have a Lipschitz continuous single-valued localization. Theorem~\ref{slcl} demonstrates that for nondegenerate solutions
to some particular class of generalized equations such a localization can be continuously differentiable as well.

\section*{Acknowledgments}
The first author would like to thank Vietnam Institute for Advanced Study in Mathematics for hospitality during her post-doctoral fellowship of the Institute in 2021--2022.

\end{document}